\DeclareMathOperator*{\esssup}{ess \, sup}
\newtheorem{theorem}{\sc Theorem}[section]
\newaliascnt{lemma}{theorem}
\newtheorem{lemma}[lemma]{\sc Lemma}
\newaliascnt{proposition}{theorem}
\newtheorem{proposition}[proposition]{\sc Proposition}
\newaliascnt{corollary}{theorem}
\newaliascnt{definition}{theorem}
\newtheorem{definition}[definition]{\sc Definition}
\newaliascnt{example}{theorem}
\theoremstyle{remark}
\newaliascnt{remark}{theorem}
\newtheorem{remark}[remark]{\sc Remark}
\crefname{theorem}{Theorem}{Theorems}
\Crefname{theorem}{Theorem}{Theorems}
\crefname{lemma}{Lemma}{Lemmas}
\Crefname{lemma}{Lemma}{Lemmas}
\crefname{proposition}{Proposition}{Propositions}
\Crefname{proposition}{Proposition}{Propositions}
\crefname{corollary}{Corollary}{Corollaries}
\Crefname{corollary}{Corollary}{Corollaries}
\crefname{definition}{Definition}{Definitions}
\Crefname{definition}{Definition}{Definitions}
\crefname{example}{Example}{Examples}
\Crefname{example}{Example}{Examples}
\crefname{remark}{Remark}{Remarks}
\Crefname{remark}{Remark}{Remarks}
\crefname{section}{Section}{Sections}  
\crefname{appendix}{Appendix}{Appendices}  
\newcommand{\printaddresses}{%
  \@setaddresses
  \global\let\@setaddresses\relax
}
\begin{document}


\title[Time-fractional evolution equations]{Time-fractional nonlinear evolution equations with time-dependent constraints}

\author{Yoshihito Nakajima}
\thanks{ORCID: 0009-0009-7152-9589} 

 \address[Yoshihito Nakajima]{Graduate School of Science, Tohoku University, Aoba, Sendai 980-8578, Japan}
 \email{naka.donda@gmail.com}

\subjclass[2020]{\emph{Primary}: 47J35; \emph{Secondary}: 35K61} 
\date{\today}

\keywords{Time-fractional gradient flows; subdifferential operator; fractional chain-rule formula; $p$-Laplace subdiffusion equation}

\maketitle
\printaddresses 

\begin{abstract}
This article is devoted to developing an abstract theory of
time-fractional gradient flow equations for time-dependent convex
functionals in real Hilbert spaces. The main results concern the
existence of strong solutions to time-fractional abstract evolution
equations governed by subdifferential operators of time-dependent
convex functionals. In the classical theory of gradient flow
equations, chain-rule formulae play a crucial role in various
analyses, and such formulae for subdifferentials of time-dependent
functionals are also known in the case of first-order time
derivatives. In contrast, in the present setting, the presence of
time-fractional derivatives prevents the direct use of the usual
chain-rule. To overcome this difficulty, fractional chain-rule
formulae for subdifferentials of time-dependent convex functionals are
established under a nonlocal variant of the so-called Kenmochi 
condition. Moreover, Gronwall-type lemmas for nonlinear Volterra
integral inequalities are developed. Finally, the abstract results
obtained are applied to initial-boundary value problems for
time-fractional degenerate parabolic equations on moving domains.
\end{abstract}

\section{Introduction}

Throughout this paper, let $T \in (0,\infty)$ be fixed, and let $H$ be a real Hilbert space equipped with an inner product $( \bullet ,  \bullet )_{H}$ and the norm $\| \bullet  \|_{H} = \sqrt{( \bullet ,  \bullet )_{H}}$. 
For each $t \in [0,T]$, let $\varphi^{t} \colon H \to (-\infty,\infty]$ be a proper (i.e., $\varphi^{t} \not\equiv \infty$) lower-semicontinuous convex functional with the \emph{effective domain}, 
\begin{equation*}
D(\varphi^{t}) := \{ w \in H \colon \varphi^{t}(w) < \infty \} \neq \emptyset.
\end{equation*}
The subdifferential operator $\partial \varphi^{t} \colon H \to 2^{H}$ is defined by 
\begin{equation*}
\partial \varphi^{t}(z) := \{ \xi \in H \colon \varphi^{t}(v) - \varphi^{t}(z) \ge (\xi, v-z)_{H} \textup{ for all } v \in D(\varphi^{t}) \} 
\end{equation*}
for $z \in H$, and its \emph{domain} is denoted by 
\begin{equation*} 
D(\partial \varphi^{t}) := \{ w \in D(\varphi^{t}) \colon \partial \varphi^{t}(w) \neq \emptyset \}.
\end{equation*} 

\noindent For $k \in L^{1}(0,T)$ and $w \in L^{1}(0,T;H)$, the convolution $k*w$ is given by 
\begin{equation*}
  (k*w)(t) := \int_{0}^{t} k(t-s) w(s) \, \textup{d}s 
  \quad \textup{for } t \in (0,T).
\end{equation*}

In this article, we consider the following abstract Cauchy problem:
\begin{equation}  \tag{P} \label{E:main-equation1}
\partial_{t}[k * (u-u_{0})](t) 
+ \partial \varphi^{t}\bigl(u(t)\bigr) \ni f(t)
\quad \textup{in } H \quad \textup{for $t \in (0,T)$,}
\end{equation}
where $u_{0} \in H$ and $f \colon (0,T)\to H$ are prescribed,  and $k$ is a kernel satisfying the following condition:    

\begin{itemize}
\item[\textup{(PC)}] 
The kernel $k \in L^{1}_{\textup{loc}}([0,\infty))$ is nonnegative and nonincreasing. 
There exists a nonnegative and nonincreasing kernel $\ell \in L^{1}_{\textup{loc}}([0,\infty))$ such that 
\begin{equation*}
(k * \ell)(t) = \int_{0}^{t} k(t-s) \ell(s) \, \textup{d}s = 1 
\quad \textup{for all } t \in (0,\infty).
\end{equation*}
\end{itemize}
Therefore $k$ is a completely positive kernel (see \cite[Theorem 2.2]{A-Clement-1981}). 
Under this assumption, we write $(k, \ell) \in PC$. 

A typical example satisfying (PC) is the \emph{Riemann--Liouville kernel}, 
\begin{equation*}
k_{\alpha}(t) = \frac{t^{\alpha-1}}{\Gamma(\alpha)} 
\quad \textup{for $t \in (0,\infty)$  and $\alpha \in (0,1)$. } 
\end{equation*}
It is known that $(k_{1-\alpha}, k_{\alpha}) \in PC$ for $\alpha \in (0,1)$.   
In this case, $\partial_{t}[k_{1-\alpha} * (u - u_{0})](t)$ coincides with the $\alpha$-th order Riemann--Liouville fractional derivative  of $u - u_{0}$, and it agrees with the $\alpha$-th order Caputo derivative of $u$, provided that $u$ is a sufficiently smooth function satisfying $u(0) = u_{0}$.  

The study of classical gradient flows dates back to the early work of Ha\"im Br\'ezis (see, e.g.,~\cite{B-Brezis-1973}) on the evolution equation, 
\begin{equation*}
\partial_{t} u(t) + \partial \varphi \bigl( u(t) \bigr) \ni f(t)
\quad \textup{in } H \quad \textup{for $t \in (0,T)$,} \quad u(0) = u_{0},  
\end{equation*}
for a proper lower-semicontinuous convex functional $\varphi$ on a real Hilbert space $H$. 
This fundamental framework is nowadays referred to as the \emph{Br\'ezis--K\=omura theory} (see also~\cite{A-Komura-1967}). 
Building upon this theory, Kenmochi~\cite{A-Kenmochi-1975} subsequently extended the Br\'ezis--K\=omura theory to cover evolution equations governed by \emph{time-dependent} subdifferential operators,
\begin{equation*}
\partial_{t}u(t) + \partial \varphi^{t} \bigl( u(t) \bigr) \ni f(t)
\quad \textup{in } H \quad \textup{for $t \in (0,T)$,} \quad u(0) = u_{0}.
\end{equation*}
This generalization makes it possible to treat nonlinear parabolic equations on moving domains and pave the way for applications to free boundary problems such as Stefan problems (see \cite{A-Kenmochi-1981}). 
Stefan problems typically arise in solid-liquid phase transitions, where heat conduction causes melting or solidification, and the interface between phases evolves over time.
As a consequence, the corresponding energy functionals explicitly depend on time, and hence, such a configuration is beyond the scope of the Br\'ezis--K\=omura theory. 
We also refer the reader to~\cite{A-Otani-1993/94,A-Watanabe-1973,A-Yamada-1976}  for related results and further developments on abstract evolution equations governed by time-dependent subdifferential operators. 

In recent years, there have been several extensions of these classical results on gradient flows to \emph{time-fractional} variants.  
In~\cite{A-Akagi-2019}, the well-posedness of the abstract Cauchy problem, 
\begin{equation}  
\label{Subdiff.Time-Fractional Equation}
\partial_{t}[k * (u - u_0)](t) + \partial \varphi \bigl( u(t) \bigr) \ni f(t)
\quad \textup{in } H \quad \textup{for $t \in (0,T)$,} 
\end{equation}
was established for proper lower-semicontinuous convex functionals $\varphi \colon H \to (-\infty,\infty]$.  
Moreover, it is also extended to Lipscitz perturbation problems. Furthermore, the abstract theory has been applied to time-fractional variants of nonlinear diffusion equations as well as Allen--Cahn equations. 
The time-fractional nonlinear diffusion equations are also studied in \cite{A-WittboldWolejkoZacher-2021,A-SchmitzWittbold-2024,A-BonforteGualdaniIbarrondo-2026,P-AkagiSodiniStefanelli-2025}. See also \cite{A-LiSalgado-2023} for a time-incremental construction of strong solutions to \eqref{Subdiff.Time-Fractional Equation}. 
Moreover, \cite{P-AkagiNakajima-2025} presents an abstract theory for  time-fractional evolution equations 
governed by the difference of two subdifferential operators of the form, 
\begin{equation*} 
  \partial_{t}[k * (u - u_0)](t)
  + \partial \varphi^{1} \bigl( u(t) \bigr)
  - \partial \varphi^{2} \bigl( u(t) \bigr)
  \ni f(t)
  \quad \textup{in } H \quad \textup{for $t \in (0,T)$}
\end{equation*}
(cf.~\cite{A-Otani-1977}). 
For related results, we refer the reader to \cite{B-AchleitnerAkagiKuehnMelenkRademacherSoresinaYang-2024,B-GalWarma-2020,B-KubicaRyszewskaYamamoto-2020,A-VergaraZacher-2015,A-VergaraZacher-2017,A-VergaraZacher-2008,A-Zacher-2008,A-Zacher-2009-abstract,B-Zhou-2024} and references therein. 
Among these results, we highlight that they rely on the pioneering work \cite{A-VergaraZacher-2008} (see also \cite{A-Zacher-2009-abstract}), which laid the groundwork for combining nonlocal time-differential operators with fractional calculus in the analysis of time-fractional evolution equations. 
However, the theory of \emph{time-fractional evolution equations} governed by \emph{time-dependent subdifferential operators} has not yet been well developed so far. 

The main purpose of the present paper is to establish an abstract theory concerning the existence of strong solutions to the Cauchy problem~\eqref{E:main-equation1}. 
To this end, several significant difficulties arise from the presence of time-fractional derivatives. 
We immediately face a major difficulty due to the lack of a valid chain-rule formula for time-fractional derivatives: 
while the chain-rule formula is a crucial tool in the study of gradient flow equations, the usual chain-rule formula is no longer valid for time-fractional derivatives (see,~e.g.,~\cite{A-Tarasov-2016}).  
Nevertheless, an alternative formula was established in~\cite{A-Akagi-2019}, which yields a fractional chain-rule formula for subdifferential operators in a practical form instead of the usual identities. 
However, this formula is restricted to time-\emph{independent} subdifferential operators, and hence, in order to handle the Cauchy problem \eqref{E:main-equation1}, where the subdifferential operator explicitly depends on time, we shall develop a time-dependent version of the fractional chain-rule formula. 
Moreover, we also develop Gronwall-type lemmas for nonlinear Volterra integral inequalities.

\bigskip
\noindent
{\bf Plan of the paper.}  
This paper is composed of eight sections.  
In \cref{Sec: Main results}, we present the main results of this paper, where we establish an existence result for the abstract Cauchy problem~\eqref{E:main-equation1}. 

In \cref{Sec: Preliminaries}, we collect preliminary facts
used throughout this paper. We also recall basic properties of subdifferential operators (see~\cref{subsec:subdifferential}), 
time-dependent subdifferential operators
(see~\cref{subsec:Time-dependent subdifferential}),
as well as nonlocal time-differential operators related to time-fractional derivatives
(see~\cref{subsec:time-nonlocal-ops}). 
In \cref{Sec: Some devices}, we develop Gronwall-type lemmas for nonlinear Volterra integral inequalities and establish time-fractional chain-rule formulae for time-dependent subdifferential operators, which plays a key role in the proof of our main results.   
\cref{Sec: Proof of well-posedness,Sec: proof of existence of strong solutions for Sobolev spaces,Sec: proof of existence of strong solutions for Lebesgue spaces} are devoted to giving proofs for our main results. 
More precisely, in \cref{Sec: Proof of well-posedness}, we prove the uniqueness and continuous dependence on initial data of strong solutions to \eqref{E:main-equation1}.  
In \cref{Sec: proof of existence of strong solutions for Sobolev spaces}, we prove the existence of strong solutions to \eqref{E:main-equation1} for $f \in W^{1,2}(0,T;H)$ under a certain assumption. 
In \cref{Sec: proof of existence of strong solutions for Lebesgue spaces}, this result is extended to $f \in L^{2}(0,T;H)$ under an additional assumption.  
\cref{Sec: Application} is dedicated to applications of the abstract theory to the Cauchy--Dirichlet problem for  time-fractional $p$-Laplace subdiffusion equations on moving domains.  



\bigskip
\noindent
{\bf Notation.}  
We use the same symbol $I$ for identity mappings on any space whenever no confusion arises.  
For a set-valued operator $A \colon H \to 2^{H}$, the domain $D(A)$, the range $R(A)$, and the graph $G(A)$ are defined by  
\begin{gather*} 
D(A) := \{ x \in H : A x \neq \emptyset \}, 
\quad 
R(A) := \bigcup_{x \in D(A)} A x, \\
G(A) := \{ (x,y) \in H \times H : y \in A x \}, 
\end{gather*} 
respectively. 
If the set $A z$ is a singleton for every $z \in D(A)$,  
we identify $A \colon H \to 2^{H}$ with a single-valued operator $A \colon D(A) \subset H \to H$;  
in this case, $A z$ denotes the unique element of the set $A z$.  
The inverse $A^{-1} \colon H \to 2^{H}$ is defined by  
\begin{equation*}
  A^{-1}x := \{ z \in H : x \in A z \} \quad \textup{for $x \in H$.}
\end{equation*}


\section{Main results} \label{Sec: Main results}

In this section, we present main results concerning existence of strong solutions to the abstract Cauchy problem~\eqref{E:main-equation1}. 
For each $t \in [0,T]$, let $\varphi^{t} \colon H \to (-\infty,\infty]$ be a proper (i.e., $\varphi^{t} \not\equiv \infty$) lower-semicontinuous convex functional. 
We define a notion of strong solution for the Cauchy problem~\eqref{E:main-equation1} in the following sense:  

\begin{definition}[Strong solutions to \eqref{E:main-equation1}]  
A function $u \in L^{2}(0,T;H)$ is called a \emph{strong solution} on $[0,T]$ to \eqref{E:main-equation1}  for  $(u_{0},f) \in H \times L^{2}(0,T;H)$,  if the following conditions are satisfied\/\textup{:} 
  \begin{itemize}
    \item[\textup{(i)}]
    It holds that $k*(u-u_{0}) \in W^{1,2}(0,T;H)$, $[k*(u-u_{0})](0) = 0$, and $u(t) \in D(\partial \varphi^{t})$ for~a.e.~$t \in (0,T)$. 
    \item[\textup{(ii)}]
    There exists $\xi \in L^{2}(0,T;H)$ such that 
    \[
      \xi(t) \in \partial \varphi^{t} \bigl( u(t) \bigr), \quad
      \partial_{t}[k*(u-u_{0})](t) + \xi(t) = f(t)
    \]
  for~a.e.~$t \in (0,T)$.
  \end{itemize}
\end{definition}
We write $(u,\xi) \in \eqref{E:main-equation1}_{u_{0}, \, f}$ if $u,\xi \in L^{2}(0,T;H)$ satisfy the above conditions. 
We are ready for stating main results of this paper. 
The following theorem concerns the uniqueness and continuous dependence on initial data of strong solutions to \eqref{E:main-equation1}.

\begin{theorem}[Uniqueness and continuous dependence on initial data  of strong solutions to \eqref{E:main-equation1}]\label{thm:wellposedness}
Let $(k,\ell) \in PC$, and let  $(u_{i},\xi_{i}) \in \eqref{E:main-equation1}_{u_{0,i},\,f_{i}}$ for $i=1,2$,  where $u_{0,i} \in H$ and $f_{i} \in L^{2}(0,T;H)$. 
Then there exists a constant  $c_{T} \in [0,\infty)$  independent of $u_{0,1}$, $u_{0,2}$, $f_{1}$ and $f_{2}$ such that  
\begin{align*} 
  \|u_{1}-u_{2}\|_{L^{2}(0,T;H)}^{2} 
  \leq c_{T} \left( \|u_{0,1}-u_{0,2}\|_{H}^{2} + \|f_{1}-f_{2}\|_{L^{2}(0,T;H)}^{2} \right).
\end{align*}
In particular, if $u_{0,1}=u_{0,2}$ and $f_{1}=f_{2}$, then $u_{1}=u_{2}$. 
\end{theorem}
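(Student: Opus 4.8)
The plan is to subtract the two equations, test the resulting relation against $u_1 - u_2$, and exploit monotonicity of the subdifferentials together with the completely positive kernel structure. Writing $u := u_1 - u_2$, $u_0 := u_{0,1} - u_{0,2}$, $f := f_1 - f_2$, and $\xi := \xi_1 - \xi_2$, one has
\begin{equation*}
  \partial_{t}[k*(u - u_{0})](t) + \xi(t) = f(t)
  \quad \textup{for a.e. } t \in (0,T),
\end{equation*}
with $\xi_i(t) \in \partial\varphi^{t}(u_i(t))$. First I would pair this identity in $H$ with $u(t)$. The monotonicity of $\partial\varphi^{t}$ (for each fixed $t$, the operator $\partial\varphi^{t}$ is monotone, a standard fact recalled in the preliminaries) gives $(\xi(t), u(t))_{H} = (\xi_1(t) - \xi_2(t), u_1(t) - u_2(t))_{H} \ge 0$. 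Hence
\begin{equation*}
  \bigl( \partial_{t}[k*(u - u_{0})](t), \, u(t) \bigr)_{H} \le (f(t), u(t))_{H}
  \quad \textup{for a.e. } t \in (0,T).
\end{equation*}

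The crucial step is to control the left-hand side from below. Here I would invoke the fundamental inequality for completely positive kernels (the convolution analogue of $\tfrac12\partial_t|w|^2 \le (\partial_t w, w)$), namely that for $(k,\ell)\in PC$ and $w \in W^{1,2}(0,T;H)$ with appropriate behavior at $0$,
\begin{equation*}
  \bigl( \partial_{t}[k*w](t), \, w(t) \bigr)_{H}
  \ge \tfrac12 \, \partial_{t}\bigl[k * \|w\|_{H}^{2}\bigr](t) + \tfrac12 \, k(t)\,\|w(t)\|_{H}^{2}
\end{equation*}
(this is Zacher's lemma / the discrete analogue used pervasively in the time-fractional literature cited, e.g. \cite{A-VergaraZacher-2008}; in the present setup it is presumably recorded in \cref{subsec:time-nonlocal-ops}). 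Applying this with $w = u - u_0$ — noting $[k*(u-u_0)](0)=0$ — and dropping the nonnegative term $\tfrac12 k(t)\|u(t)-u_0\|_H^2$, together with Young's inequality $(f,u)_H \le \tfrac12\|f\|_H^2 + \tfrac12\|u\|_H^2$, yields
\begin{equation*}
  \tfrac12 \, \partial_{t}\bigl[k * \|u-u_{0}\|_{H}^{2}\bigr](t)
  \le \tfrac12 \|f(t)\|_{H}^{2} + \tfrac12 \|u(t)\|_{H}^{2}.
\end{equation*}
Integrating in time and using $\|u(t)\|_H^2 \le 2\|u(t)-u_0\|_H^2 + 2\|u_0\|_H^2$, one arrives at a scalar Volterra integral inequality for $y(t) := \|u(t)-u_0\|_H^2$ of the form $(k*y)(t) \le C\bigl(t\|u_0\|_H^2 + \|f\|_{L^2(0,t;H)}^2\bigr) + 2\int_0^t \|u(s)-u_0\|_H^2\,ds$ — or, after convolving with $\ell$ and using $k*\ell \equiv 1$, an inequality directly for $y$ itself.

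At this point I would apply the Gronwall-type lemma for nonlinear Volterra integral inequalities developed in \cref{Sec: Some devices} to conclude $\|u(t)-u_0\|_{H}^2 \le c_T\bigl(\|u_0\|_H^2 + \|f\|_{L^2(0,T;H)}^2\bigr)$ for a.e. $t$, with $c_T$ depending only on $T$, $k$, $\ell$; integrating once more over $(0,T)$ and absorbing the $\|u_0\|_H^2$ contribution gives the stated $L^2$-bound on $u_1 - u_2$. The uniqueness assertion is then immediate by taking $u_{0,1}=u_{0,2}$ and $f_1=f_2$. The main obstacle I anticipate is the rigorous justification of the lower bound on $\bigl(\partial_t[k*w],w\bigr)_H$ at the level of regularity available here ($u \in L^2$, $k*(u-u_0)\in W^{1,2}$ only), since the pointwise chain-type inequality must be established in an integrated/approximated sense rather than pointwise; this is precisely the kind of technical point the preliminary sections are set up to handle, so I would cite the relevant lemma there rather than reprove it. A secondary subtlety is ensuring the constant in the Gronwall step is genuinely independent of the data, which follows because the nonlinear Volterra comparison depends only on the kernel pair and $T$.
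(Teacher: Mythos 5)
Your overall strategy (subtract the two inclusions, test with $u_1-u_2$, use monotonicity of $\partial\varphi^{t}$, apply a fractional chain-rule for $\tfrac12\|\cdot\|_H^2$, then convolve with $\ell$ and use $k*\ell\equiv1$) is the same as the paper's, but there is a genuine gap at the central step. After pairing the subtracted equation with $u(t)=u_1(t)-u_2(t)$, the quantity you must bound from below is $\bigl(\partial_t[k*(u-u_0)](t),\,u(t)\bigr)_H$ with $u_0:=u_{0,1}-u_{0,2}$, whereas the Zacher-type inequality you invoke, applied ``with $w=u-u_0$'', bounds $\bigl(\partial_t[k*(u-u_0)](t),\,(u-u_0)(t)\bigr)_H$. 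These differ by the cross term $\bigl(\partial_t[k*(u-u_0)](t),\,u_0\bigr)_H$, which your argument silently drops; it has no sign and must be estimated (after integration it equals $\bigl([k*(u-u_0)](t),\,u_0\bigr)_H$, which can be absorbed, but only with extra bookkeeping). Nor can you repair this by testing with $u-u_0$ instead, since monotonicity only gives $(\xi_1-\xi_2,\,u_1-u_2)_H\ge 0$, not $(\xi_1-\xi_2,\,(u_1-u_2)-(u_{0,1}-u_{0,2}))_H\ge 0$. The paper's \cref{P:frac_chain} is stated precisely in the shifted form needed here, namely $\int_0^t\bigl(\mathcal{B}(w-w_0)(\tau),\,w(\tau)\bigr)_H\,\textup{d}\tau \ge \tfrac12\bigl[k*(\|w\|_H^2-\|w_0\|_H^2)\bigr](t)$, and it also settles the regularity concern you correctly flag, since it only needs $w\in L^2(0,T;H)$ with $w-w_0\in D(\mathcal{B})$; citing it with $w=u_1-u_2$, $w_0=u_{0,1}-u_{0,2}$ closes the gap, but the subsequent estimates are then naturally phrased in terms of $\|u\|_H^2-\|u_0\|_H^2$ rather than $\|u-u_0\|_H^2$.

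Two further points. First, your Gronwall conclusion, a bound on $\|u(t)-u_0\|_H^2$ for a.e.\ $t$, overshoots: from a Volterra inequality for $k*y$ (equivalently, after convolving with $\ell$, for $1*y$) one can only control the integrated quantity $\int_0^t y$, not $y$ pointwise (here $u$ is merely $L^2$ in time); the integrated bound is, however, all the theorem requires, and \cref{prop:ineq} applied to $z:=1*y\in L^\infty(0,T)$ delivers it with a constant depending only on $\ell$ and $T$. Second, the Gronwall detour is avoidable: the paper estimates $\int_0^t(f_1-f_2,\,u_1-u_2)_H$ by H\"older rather than pointwise Young, convolves with $\ell$, and then absorbs $\tfrac14\|u_1-u_2\|_{L^2(0,T;H)}^2$ into the left-hand side, which yields the explicit constant $\max\{4\|\ell\|_{L^1(0,T)}^2,\,2T\}$ directly.
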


In order to state existence results, we employ the so-called Kenmochi condition   (cf. \cite{A-Kenmochi-1975,A-Kenmochi-1981,A-Yamada-1976,A-Otani-1993/94}): 

\begin{itemize}  
  \item[\textup{(A1)}]
  There is a constant $c_{1} \in [0,\infty)$ with the following property: for each $s,t \in [0,T]$ with $s \leq t$ and each $z_{s} \in D(\varphi^{s})$, there exists $z_{s,t} \in D(\varphi^{t})$ such that 
    \begin{align}
      \|z_{s,t} - z_{s}\|_{H} &\leq c_{1} |t - s| \bigl( 1+|\varphi^{s}(z_{s})| \bigr)^{1/2}, 
      \label{eq:A1-z-approx1}
      \\
      \varphi^{t}(z_{s,t})    &\leq \varphi^{s}(z_{s}) + c_{1} |t-s| \bigl( 1+|\varphi^{s}(z_{s})| \bigr). 
      \label{eq:A1-z-approx2} 
    \end{align}
\end{itemize}

We are now ready to state an existence result. 
The following theorem is concerned with the existence of strong solutions to \eqref{E:main-equation1}  for $f \in W^{1,2}(0,T;H)$: 

\begin{theorem}[Existence of strong solutions for $f \in W^{1,2}(0,T;H)$] \label{thm:main1}
Let $(k,\ell) \in PC$ and assume that \textup{(A1)} holds. 
Then, for every $u_{0} \in D(\varphi^{0})$ and $f \in W^{1,2}(0,T;H)$, the Cauchy problem \eqref{E:main-equation1} admits a unique strong solution $u \in L^{2}(0,T;H)$ on $[0,T]$  such that   
\begin{gather} \label{eq:energy-estimate-strong-Sobolev}
  \varphi^{ \bullet }  \bigl( u( \bullet ) \bigr) \in L^{\infty}(0,T),
  \quad 
  \ell* \|\partial_{t}[k*(u-u_{0})]\|_{H}^{2} \in L^{\infty}(0,T). 
\end{gather} 
 Furthermore, it holds that $u$ belongs to $C([0,T];H)$ and $u(0) = u_{0}$. 
\end{theorem}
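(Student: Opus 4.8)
The plan is to construct the strong solution by a regularization/approximation scheme and pass to the limit using the time-dependent fractional chain-rule formula and the Gronwall-type lemmas announced in Section~\ref{Sec: Some devices}. First I would regularize the problem: replace each $\varphi^{t}$ by its Moreau--Yosida regularization $\varphi^{t}_{\lambda}$ (with parameter $\lambda>0$), whose subdifferential $\partial\varphi^{t}_{\lambda}$ is the Yosida approximation $(\partial\varphi^{t})_{\lambda}$, a Lipschitz operator on $H$ depending measurably on $t$. The Kenmochi condition (A1) transfers to a quantitative time-regularity estimate for $\varphi^{t}_{\lambda}$ and for $J^{t}_{\lambda}=(I+\lambda\partial\varphi^{t})^{-1}$, uniformly in $\lambda$. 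For the regularized equation $\partial_{t}[k*(u_{\lambda}-u_{0})](t)+\partial\varphi^{t}_{\lambda}(u_{\lambda}(t))=f(t)$ one obtains solvability (e.g.\ via a fixed-point argument on the resolvent $\ell*(\cdot)$, using that $k$ is completely positive so that convolution with $\ell$ inverts $\partial_{t}(k*\cdot)$, combined with the Lipschitz continuity of $\partial\varphi^{t}_{\lambda}$ and Gronwall). This gives, for each $\lambda>0$, a function $u_{\lambda}$ with $k*(u_{\lambda}-u_{0})\in W^{1,2}(0,T;H)$.

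Next I would derive the a priori estimates uniform in $\lambda$. Testing the regularized equation by $\partial_{t}[k*(u_{\lambda}-u_{0})](t)$ and invoking the \emph{time-dependent} fractional chain-rule formula of Section~\ref{Sec: Some devices} applied to $t\mapsto\varphi^{t}_{\lambda}(u_{\lambda}(t))$, the leading term is controlled from below by (a multiple of) $\partial_{t}[\ell*\|\partial_{t}[k*(u_{\lambda}-u_{0})]\|_{H}^{2}]$ plus $\partial_{t}\varphi^{t}_{\lambda}(u_{\lambda}(t))$, while the time-dependence of $\varphi^{t}_{\lambda}$ contributes a remainder bounded, via (A1), by $c\,(1+|\varphi^{t}_{\lambda}(u_{\lambda}(t))|)$ plus a term absorbable into $\|\partial_{t}[k*(u_{\lambda}-u_{0})]\|_{H}$; the right-hand side contributes $(f,\partial_{t}[k*(u_{\lambda}-u_{0})])$, handled by integrating by parts in time (using $f\in W^{1,2}$ so $f'\in L^{2}$ and $[k*(u_{\lambda}-u_{0})](0)=0$) to trade the derivative onto $f$. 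Integrating and applying the nonlinear Volterra--Gronwall lemma yields $\varphi^{\bullet}_{\lambda}(u_{\lambda}(\bullet))\in L^{\infty}(0,T)$ and $\ell*\|\partial_{t}[k*(u_{\lambda}-u_{0})]\|_{H}^{2}\in L^{\infty}(0,T)$, with bounds independent of $\lambda$; since $\ell\in L^{1}_{\mathrm{loc}}$ is not integrable against a constant in general but $k*\ell=1$, one further extracts $\partial_{t}[k*(u_{\lambda}-u_{0})]\in L^{2}(0,T;H)$ and hence $u_{\lambda}$ bounded in $L^{2}(0,T;H)$, together with $\partial\varphi^{t}_{\lambda}(u_{\lambda})=f-\partial_{t}[k*(u_{\lambda}-u_{0})]$ bounded in $L^{2}(0,T;H)$ and $\|u_{\lambda}-J^{t}_{\lambda}u_{\lambda}\|_{H}=\lambda\|\partial\varphi^{t}_{\lambda}(u_{\lambda})\|_{H}\to0$ in $L^{2}$.

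Then I would pass to the limit $\lambda\downarrow0$. From the uniform bounds, up to a subsequence $u_{\lambda}\rightharpoonup u$ and $\partial\varphi^{t}_{\lambda}(u_{\lambda})\rightharpoonup\xi$ in $L^{2}(0,T;H)$, and $\partial_{t}[k*(u_{\lambda}-u_{0})]\rightharpoonup\partial_{t}[k*(u-u_{0})]$ with $[k*(u-u_{0})](0)=0$; strong compactness of $k*(u_{\lambda}-u_{0})$ (e.g.\ an Aubin--Lions-type argument adapted to the nonlocal operator, as in the cited works of Zacher and Akagi) upgrades this to $u_{\lambda}\to u$ strongly enough to identify the limit of $J^{t}_{\lambda}u_{\lambda}$ with $u$. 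The demiclosedness of the time-dependent maximal monotone operator $u\mapsto\partial\varphi^{t}(u(t))$ in $L^{2}(0,T;H)$ (here the lower semicontinuity in $t$ built into (A1), together with $\xi_{\lambda}\in\partial\varphi^{t}(J^{t}_{\lambda}u_{\lambda})$ and a limsup argument on $\int\!\int(\xi_{\lambda},J^{t}_{\lambda}u_{\lambda}-v)$) yields $\xi(t)\in\partial\varphi^{t}(u(t))$ a.e., so $(u,\xi)\in\eqref{E:main-equation1}_{u_{0},f}$; the energy bounds \eqref{eq:energy-estimate-strong-Sobolev} pass to the limit by weak lower semicontinuity. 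Uniqueness is immediate from Theorem~\ref{thm:wellposedness}. Finally, $u\in C([0,T];H)$ with $u(0)=u_{0}$ follows because $\partial_{t}[k*(u-u_{0})]\in L^{2}(0,T;H)$ and $u_{0}\in D(\varphi^{0})$ allow one to write $u-u_{0}=\ell*\partial_{t}[k*(u-u_{0})]$ (using complete positivity) and invoke the continuity properties of the fractional integral operator $\ell*(\cdot)$ together with a bound on $\varphi^{t}(u(t))$ near $t=0$.

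\textbf{Main obstacle.} The hard part will be the correct handling of the time-dependence of $\varphi^{t}$ inside the fractional energy estimate: one cannot differentiate $\varphi^{t}(u(t))$ naively, and the nonlocal term $\partial_{t}[k*(u-u_{0})]$ does not pair with $\partial_{t}u$ in a way that produces a clean monotone quantity. Making the time-dependent fractional chain-rule formula of Section~\ref{Sec: Some devices} interact cleanly with the (A1) remainder terms — and closing the resulting \emph{nonlinear} Volterra inequality for $\varphi^{t}(u_{\lambda}(t))$ via the Gronwall-type lemma without losing uniformity in $\lambda$ — is the crux; the compactness needed to identify $\xi\in\partial\varphi^{\bullet}(u(\bullet))$ in the absence of a Hilbert--Sobolev embedding for the nonlocal operator is the secondary difficulty.
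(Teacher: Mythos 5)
Your proposal has a genuine gap at the a~priori estimate, which is the heart of the proof. You regularize only by Moreau--Yosida, test the $\lambda$-regularized equation with $\partial_{t}[k*(u_{\lambda}-u_{0})]$, and claim the pairing with $\partial\varphi^{t}_{\lambda}(u_{\lambda})$ is bounded below by $\partial_{t}\varphi^{t}_{\lambda}(u_{\lambda}(t))$ plus a multiple of $\partial_{t}\bigl[\ell*\|\partial_{t}[k*(u_{\lambda}-u_{0})]\|_{H}^{2}\bigr]$. Neither bound is available: a pointwise lower bound of the form $\partial_{t}\varphi^{t}_{\lambda}(u_{\lambda}(t))$ is exactly the naive fractional chain rule that fails for nonlocal derivatives, and the $\ell*\|\cdot\|_{H}^{2}$ control in \eqref{E:AB-energy-estimate} arises from pairing $\mathcal{B}(u-u_{0})$ with the \emph{classical} derivative $\mathcal{A}(u-u_{0})$, hence requires $u_{\lambda}\in W^{1,2}(0,T;H)$ with $u_{\lambda}(0)=u_{0}$, which the purely Yosida-regularized nonlocal problem does not give. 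Moreover, the time-dependent fractional chain-rule formulae of \cref{lem:chainrule1,lem:chainrule2} that you invoke require assumption \textup{(A2)} (measurable selections) in addition to \textup{(A1)}, whereas \cref{thm:main1} assumes only \textup{(A1)}; and even where they apply they control only the convolution $k*\varphi^{\bullet}(u(\bullet))$ up to an $\varepsilon\|g\|_{H}^{2}$ remainder, which does not produce the pointwise $L^{\infty}$ bounds \eqref{eq:energy-estimate-strong-Sobolev}. The paper proceeds differently: it adds a second, parabolic regularization $\nu\mathcal{A}$ (i.e.\ $\nu\,\partial_{t}u$), so that the approximate solution $u_{\nu}$ lies in $W^{1,2}(0,T;H)$, solves the approximate problem by a Banach fixed-point argument, tests with the classical derivative $\partial_{t}u_{\nu}$, uses the \emph{local} Kenmochi chain rule (\cref{prop:time-subdiff-energy}, which needs only \textup{(A1)}) together with \eqref{E:AB-energy-estimate} and an integration by parts in the $f$-term ($f\in W^{1,2}$), and closes the resulting Volterra inequality with \cref{prop:ineq}; the fractional chain rule of Section~4 is used only for \cref{thm:main2}.

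There is a second, lesser gap in the limit passage: you rely on an Aubin--Lions-type compactness for $k*(u_{\lambda}-u_{0})$ to obtain strong convergence, but in this abstract Hilbert setting no compactness of sublevel sets of $\varphi^{t}$ is assumed, so such an argument is not available. The paper instead shows that the approximate family is a Cauchy sequence in $L^{2}(0,T;H)$ directly, by testing the difference of two approximate equations with $u_{\mu}-u_{\nu}$, applying the square-norm case of \cref{P:frac_chain} and K\={o}mura's trick for Yosida approximations with different parameters, and using the uniform bounds on $\nu\mathcal{A}(u_{\nu}-u_{0})$ and $\partial\Phi_{\nu}(u_{\nu})$; the identification $\xi(t)\in\partial\varphi^{t}(u(t))$ then follows from demiclosedness of $\partial\Phi$ and the resolvent convergence, much as you sketch. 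Your alternative limsup argument could in principle substitute for compactness, but as written it is not carried out, and without the corrected a~priori estimates the bounds it would need are not in place.
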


The above existence result can be extended to  more general external forces under the following additional assumption:  

\begin{itemize}  
  \item[ \textup{(A2)} ]  
  There exists a constant $c_{2} \in [0,\infty)$ such that the following property holds: for each $u \in L^{2}(0,T;H)$  satisfying  $\varphi^{ \bullet }(u( \bullet )) \in L^{1}(0,T)$ and  for  each $t \in (0,T)$, there exists a function $w_{t} \in L^{2}(0,t;H)$ such that 
  \begin{align}
    \|w_{t}(s) - u(s)\|_{H} &\leq c_{2}\,|t - s| \bigl( 1 + \bigl| \varphi^{s} \bigl( u(s) \bigr) \bigr| \bigr)^{1/2}, \label{eq:A2-z-approx1}
    \\
    \varphi^{t}\bigl( w_{t}(s) \bigr) &\leq \varphi^{s}\bigl(u(s)\bigr) 
    + c_{2}\,|t-s| \bigl( 1+ \bigl| \varphi^{s} \bigl( u(s) \bigr) \bigr| \bigr) \label{eq:A2-z-approx2}
  \end{align}
  for a.e.~$s \in (0,t)$.  
\end{itemize}

Here, the conditions \textup{(A1)} and \textup{(A2)} are essentially different from each other. 
Indeed, the condition \textup{(A1)} only requires that for each 
$s,t \in [0,T]$ with $s \leq t$ and each $z_{s} \in D(\varphi^{s})$, we can choose a point $z_{s,t} \in D(\varphi^{t})$ satisfying \eqref{eq:A1-z-approx1} and \eqref{eq:A1-z-approx2}.  
This does not ensure the strong measurability of the mapping $t \mapsto z_{s,t}$.  
In contrast, the condition \textup{(A2)} guarantees that for each $u \in L^{2}(0,T;H)$ satisfying $\varphi^{\bullet} (u(\bullet)) \in L^{1}(0,T)$ and for each $t \in (0,T)$, there exists a function $w_{t} \in L^{2}(0,t;H)$ satisfying \eqref{eq:A2-z-approx1} and \eqref{eq:A2-z-approx2}.

\begin{remark}[A sufficient condition for \textup{(A1)} and  \textup{(A2)}] \label{Remark: Sufficient condition for A1 and A2}
Let $\Delta$ be defined by 
\begin{align*} 
  \Delta := \{(t,s) \in [0,T] \times [0,T] \colon t \geq s \}. 
\end{align*} 
We introduce a nonlocal version of the Kenmochi condition:  
\begin{itemize}
\item[\textup{$(A\varphi^{t})_{NL}$}]
There exist a constant $C \in [0,\infty)$ and a Carath\'eodory function $\Psi \colon \Delta\times H \to H$, i.e., for each $w \in H$, the mapping $(t,s) \mapsto \Psi(\bullet,\bullet,w)$ is strongly measurable, and for~a.e.~$(t,s) \in \Delta$, $\Psi(t,s,\bullet)$ is continuous in $H$, such that 
\begin{align*} 
  \|\Psi(t,s,w) - w\|_{H}            &\leq C\,|t-s| \bigl(1 + |\varphi^{s}(w)|\bigr)^{1/2}, \\
  \varphi^{t}\bigl(\Psi(t,s,w)\bigr) &\leq \varphi^{s}(w) + C\,|t-s| \bigl(1 + |\varphi^{s}(w)|\bigr)
\end{align*}
for all $(t,s) \in \Delta$ and all $w \in H$.  
\end{itemize} 
Then \textup{$(A\varphi^{t})_{NL}$} implies both \textup{(A1)} and  \textup{(A2)} with $c_{1}=c_{2}=C$.  
Indeed, to verify \textup{(A1)}, for each $s,t \in [0,T]$ with $s \leq t$ and each $z_{s} \in D(\varphi^{s})$, we set $z_{s,t}:=\Psi(t,s,z_{s})$.
Then the above inequalities imply that the condition \textup{(A1)} is satisfied.  
To verify \textup{(A2)}, for each $u \in L^{2}(0,T;H)$ satisfying 
$\varphi^{\bullet}(u(\bullet)) \in L^{1}(0,T)$ and for each $t \in (0,T)$, we set  $w_{t}( \bullet ):=\Psi \bigl( t, \bullet ,u( \bullet ) \bigr)$.  
Then, since $\Psi$ is a Carath\'eodory function, the mapping $s \mapsto w_{t}(s)$ is strongly measurable in $(0,T)$, and the condition \textup{(A2)}  also follows from the same estimates. 
\end{remark}

Under these assumptions, we obtain the following existence result for  the case where $f \in L^{2}(0,T;H)$\textup{:} 

\begin{theorem}[Existence of strong solutions for $f \in L^{2}(0,T;H)$] \label{thm:main2} 
Let $(k,\ell) \in PC$ and assume that \textup{(A1)} and  \textup{(A2)} hold.  
Then, for every $u_{0} \in D(\varphi^{0})$ and $f \in L^{2}(0,T;H)$, the Cauchy problem \eqref{E:main-equation1} admits a unique strong solution $u \in L^{2}(0,T;H)$ on $[0,T]$. 
\end{theorem}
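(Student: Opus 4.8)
The plan is to derive Theorem~\ref{thm:main2} from Theorem~\ref{thm:main1} by a density-and-approximation argument in the external force $f$. Given $f \in L^{2}(0,T;H)$ and $u_{0} \in D(\varphi^{0})$, choose a sequence $f_{n} \in W^{1,2}(0,T;H)$ with $f_{n} \to f$ in $L^{2}(0,T;H)$. By Theorem~\ref{thm:main1}, for each $n$ there is a unique strong solution $u_{n}$ to $\eqref{E:main-equation1}_{u_{0},f_{n}}$ with associated section $\xi_{n} \in L^{2}(0,T;H)$, $\xi_{n}(t) \in \partial\varphi^{t}(u_{n}(t))$, and the energy bounds \eqref{eq:energy-estimate-strong-Sobolev}. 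First I would invoke Theorem~\ref{thm:wellposedness} to get $\|u_{n}-u_{m}\|_{L^{2}(0,T;H)}^{2} \le c_{T}\|f_{n}-f_{m}\|_{L^{2}(0,T;H)}^{2}$, so $(u_{n})$ is Cauchy and converges strongly to some $u \in L^{2}(0,T;H)$. Uniqueness of the limit solution is then immediate from Theorem~\ref{thm:wellposedness} once existence is established; what remains is to pass to the limit in the equation.

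The key step is to obtain $n$-independent a priori bounds that survive only under the $L^{2}$-bound on $f_{n}$ (not the $W^{1,2}$-bound, which blows up). Here is where assumption~(A2) enters decisively: testing the equation for $u_{n}$ against $\ell * \xi_{n}$ (or against $\partial_t[k*(u_n-u_0)]$) and using the fractional chain-rule formula for time-dependent subdifferentials from \cref{Sec: Some devices}, together with (A2) to control the time-variation term $\varphi^{t}(w_{t}(s)) - \varphi^{s}(u_{n}(s))$ arising when one compares $\partial\varphi^{t}$ at different times, should yield a Volterra-type integral inequality for $\varphi^{\bullet}(u_{n}(\bullet))$ and for $\ell * \|\partial_t[k*(u_n-u_0)]\|_H^2$. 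Applying the Gronwall-type lemma for nonlinear Volterra integral inequalities (also from \cref{Sec: Some devices}) converts this into bounds
\begin{equation*}
  \|\varphi^{\bullet}(u_{n}(\bullet))\|_{L^{1}(0,T)} + \|\xi_{n}\|_{L^{2}(0,T;H)}^{2} + \|\partial_{t}[k*(u_{n}-u_{0})]\|_{L^{2}(0,T;H)}^{2} \le C,
\end{equation*}
with $C$ depending only on $u_{0}$, $\|f\|_{L^{2}(0,T;H)}$, $c_{1}$, $c_{2}$, $T$, and $(k,\ell)$. The role of (A2) over (A1) is precisely that it supplies a \emph{measurable in $s$} comparison function $w_{t}(\cdot) \in L^{2}(0,t;H)$, which is what one needs to integrate the Kenmochi-type estimates against an $L^2$-in-time test function rather than evaluating them pointwise at a single pair $(s,t)$.

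Having these bounds, I would extract (along a subsequence) weak limits $\xi_{n} \rightharpoonup \xi$ and $\partial_{t}[k*(u_{n}-u_{0})] \rightharpoonup \eta$ in $L^{2}(0,T;H)$. Since $k*(u_{n}-u_{0}) \to k*(u-u_{0})$ in $L^{2}(0,T;H)$ (convolution with $k \in L^{1}_{\mathrm{loc}}$ is continuous) and $u_n \to u$ strongly, one identifies $\eta = \partial_{t}[k*(u-u_{0})]$, with $[k*(u-u_{0})](0)=0$, so $k*(u-u_{0}) \in W^{1,2}(0,T;H)$. Passing to the limit in $\partial_{t}[k*(u_{n}-u_{0})] + \xi_{n} = f_{n}$ gives $\partial_{t}[k*(u-u_{0})] + \xi = f$ a.e. The remaining point — the main obstacle — is to show $\xi(t) \in \partial\varphi^{t}(u(t))$ for a.e.\ $t$, i.e.\ that the (time-dependent, hence nonautonomous) graph of $\partial\varphi^{\bullet}$ is closed under the strong-$L^{2}$/weak-$L^{2}$ convergence of $(u_{n},\xi_{n})$. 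This is the standard demi-closedness issue for subdifferentials, complicated by the $t$-dependence; I expect to handle it via the integrated subdifferential inequality
\begin{equation*}
  \int_{0}^{T}\varphi^{t}(v(t))\,\mathrm{d}t \ge \int_{0}^{T}\varphi^{t}(u_{n}(t))\,\mathrm{d}t + \int_{0}^{T}(\xi_{n}(t), v(t)-u_{n}(t))_{H}\,\mathrm{d}t
\end{equation*}
for suitable test functions $v$, combined with weak lower semicontinuity of $v \mapsto \int_{0}^{T}\varphi^{t}(v(t))\,\mathrm{d}t$ (a consequence of lower semicontinuity and convexity of each $\varphi^{t}$, plus measurability in $t$) and the strong convergence $u_{n}\to u$ to handle the cross term $\limsup\int(\xi_n,u_n)$; one also uses (A2) to produce admissible test functions $v$ adapted to each $\varphi^{t}$. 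Once $\xi(t)\in\partial\varphi^{t}(u(t))$ is verified, $(u,\xi) \in \eqref{E:main-equation1}_{u_{0},f}$, completing the existence proof, and uniqueness follows from Theorem~\ref{thm:wellposedness}.
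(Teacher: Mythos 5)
Your overall skeleton (approximate $f$ by $f_{n}\in W^{1,2}$, apply \cref{thm:main1}, get a Cauchy sequence from \cref{thm:wellposedness}, derive $n$-uniform bounds on $\xi_{n}$ and $\partial_{t}[k*(u_{n}-u_{0})]$ via the time-dependent chain rule of \cref{lem:chainrule2} plus the Gronwall lemma \cref{prop:ineq}, then pass to the limit using demiclosedness of $\mathcal{B}$ and of $\partial\Phi$) is indeed the first half of the paper's proof, and your limit-identification step is essentially the paper's use of the demiclosedness of $\partial\Phi$ combined with $u_{n}\to u$ strongly in $L^{2}(0,T;H)$.

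However, there is a genuine gap: your a priori estimate does not close when $\varphi^{t}$ takes negative values, and you give no mechanism for handling this. Testing \eqref{eq:fn-approx-eq} by $\xi_{n}$ and invoking \eqref{eq:device-4} produces, on the left, only the \emph{signed} quantity $\bigl[k*\varphi^{\bullet}(u_{n}(\bullet))\bigr](t)+\int_{0}^{t}\|\xi_{n}\|_{H}^{2}\,\mathrm{d}s$, while the right-hand side contains $\int_{0}^{t}\bigl|\varphi^{s}(u_{n}(s))\bigr|\,\mathrm{d}s$. When $\varphi^{t}\geq 0$ one has $\int_{0}^{t}\varphi^{s}(u_{n}(s))\,\mathrm{d}s=\bigl[\ell*\bigl(k*\varphi^{\bullet}(u_{n}(\bullet))\bigr)\bigr](t)$ and \cref{prop:ineq} applies directly (this is exactly \eqref{eq:uniform-estimate-un}--\eqref{eq:kphi-uniform-Linf}); without that sign the inequality is not of the form required by the Gronwall lemma, so the bounds you assert (in particular on $\|\varphi^{\bullet}(u_{n}(\bullet))\|_{L^{1}(0,T)}$ and $\|\xi_{n}\|_{L^{2}(0,T;H)}$) are not justified. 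The paper therefore proves the theorem first under the extra hypothesis $\varphi^{t}\geq 0$, and then reduces the general case by replacing $\varphi^{t}$ with $\psi^{t}(z)=\varphi^{t}(z)+\tfrac{D_{0}}{2}\|z\|_{H}^{2}+D_{0}$ (nonnegative thanks to \eqref{eq:td-subdiff-varphi-bound}), verifying by a separate computation that $\{\psi^{t}\}$ again satisfies (A1) and (A2). Since $\partial\psi^{t}(w)=\partial\varphi^{t}(w)+\{D_{0}w\}$, the original problem becomes the $\psi^{t}$-problem with forcing $f+D_{0}u$, i.e.\ a Lipschitz perturbation, which the paper resolves by a contraction-mapping argument in a weighted $L^{2}$-type norm (Banach fixed point for the map $h\mapsto v$ solving \eqref{eq:psi-u0-g}$_{u_{0},\,f+D_{0}h}$). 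This entire second half — the sign reduction, the verification of (A1)/(A2) for $\psi^{t}$, and the fixed-point absorption of the induced term $D_{0}u$ — is missing from your proposal, and without it (or some alternative treatment of sign-changing $\varphi^{t}$, e.g.\ supplementary norm estimates obtained by also testing with $u_{n}-u_{0}$) the argument does not prove \cref{thm:main2} in the stated generality.
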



\section{Preliminaries} \label{Sec: Preliminaries}

\subsection{Maximal monotone operators and Subdifferential operators}  \label{subsec:subdifferential}

We briefly recall some basic  facts on  subdifferential operators and maximal monotone operators; 
see, e.g.,  \cite{B-Barbu-1976,B-BauschkeHeinzCombettes-2017,B-Brezis-1973,B-Cioranescu-1990,B-PapageorgiouGasinski-2006,B-PapageorgiouNikolaosRadulescuRepovs-2019,B-Roubivcek-2005,B-Showalter-1997}  for details.  

Let $\varphi: H \to (-\infty,\infty]$ be a proper (i.e., $\varphi \not\equiv \infty$), lower-semicontinuous, and convex functional with the \emph{effective domain},  
\begin{equation*}
D(\varphi):=\{w \in H \colon \varphi(w) < \infty\} \neq \emptyset.
\end{equation*}
We define the \emph{subdifferential operator} $\partial \varphi \colon H \to 2^{H}$ by 
\begin{equation*}
\partial \varphi(z):=\{\xi \in H \colon \varphi(v)-\varphi(z) \geq (\xi,v-z)_{H} \textup{ for all } v\in D(\varphi)\}
\end{equation*}
for $z \in H$. 
It is a fundamental result that $\partial \varphi$ is a maximal monotone operator (see \cite[Example~2.3.4]{B-Brezis-1973}, \cite[Chapter~II, Theorem~2.1]{B-Barbu-1976}, also \cite{B-PapageorgiouNikolaosRadulescuRepovs-2019,B-BauschkeHeinzCombettes-2017}). 
Here,  an operator $A \colon H \to 2^{H}$ is said to be \emph{maximal monotone} (or $m$-accretive) 
if the following two conditions hold:
\begin{itemize} 
  \item[(i)]
  $(\xi_{1} - \xi_{2}, u_{1} - u_{2})_{H} \geq 0$ for  all  $(u_{1},\xi_{1}), (u_{2},\xi_{2}) \in G(A)$,   
  \item[(ii)] 
  $R(I+\lambda A) = H$ for every $\lambda \in (0,\infty)$. 
\end{itemize}
The condition~(i) is equivalent to the following one  (see, e.g., 
\cite[Proposition~2.1]{B-Brezis-1973}, \cite[Chapter~II, Proposition~3.1]{B-Barbu-1976}, 
\cite[Proposition~3.3.4]{B-PapageorgiouGasinski-2006} and \cite{B-Showalter-1997,B-BauschkeHeinzCombettes-2017}):  
\begin{itemize}
  \item[(i)']
  For every $\lambda \in (0,\infty)$ and $(u_{1},\xi_{1}), (u_{2},\xi_{2}) \in G(A)$, it holds that 
  \begin{equation*} 
    \|(u_{1}+ \lambda \xi_{1}) - (u_{2}+ \lambda \xi_{2})\|_{H} \geq \|u_{1}-u_{2}\|_{H}.
  \end{equation*}
  \par\noindent 
  In particular, $(I+ \lambda A)^{-1}\colon H \to 2^{H}$ is a single-valued operator.  
\end{itemize}

The \emph{minimal section} $\mathring{A} \colon D(A) \subset H \to H$ of $A$ is defined by 
$\mathring{A}(w) := \operatorname*{argmin}_{\xi \in A(w)} \|\xi \|_{H}$ for $w \in D(A)$. 
Since, for each $w \in D(A)$, the set $A(w) \subset H$ is a weakly closed convex subset of $H$  (see, e.g., \cite[p.~28]{B-Brezis-1973}, \cite[Chapter~II, Proposition~3.5]{B-Barbu-1976}, \cite[Proposition~3.2.7]{B-PapageorgiouGasinski-2006} and \cite{B-BauschkeHeinzCombettes-2017,B-PapageorgiouNikolaosRadulescuRepovs-2019}),   
the minimizer is unique. 
Therefore $\mathring{A}(w)$ is well-defined for every $w \in D(A)$. 
Moreover, every maximal monotone operator is \emph{demiclosed}, that is, if 
$(u_{n},\xi_{n}) \in G(A)$, $u_{n} \to u$ strongly in $H$, and $\xi_{n} \rightharpoonup \xi$ weakly in $H$, 
then it follows that $(u,\xi) \in G(A)$ (see, e.g., \cite[Proposition~2.5]{B-Brezis-1973}, \cite[Chapter~II, Lemma~1.3]{B-Barbu-1976} and \cite{B-PapageorgiouGasinski-2006,B-BauschkeHeinzCombettes-2017}).   

For each maximal monotone operator $A \colon H \to 2^{H}$, 
the \emph{resolvent} $J_{\lambda}^{A} \colon H \to H$ 
and the \emph{Yosida approximation} $A_{\lambda} \colon H \to H$ are defined by
\begin{equation*}
  J_{\lambda}^{A} := (I+\lambda A)^{-1}, 
  \quad
  A_{\lambda} := \frac{I - J_{\lambda}^{A}}{\lambda}
\end{equation*}
for $\lambda \in (0,\infty)$, respectively.  
The following properties are well known\textup{:}

\begin{proposition}[\textup{see, e.g., \cite[Propositions~2.2 and 2.6, Theorem~2.2]{B-Brezis-1973}, \cite[Chapter~II, Proposition~1.1]{B-Barbu-1976}, \cite[Theorem~3.2.38]{B-PapageorgiouGasinski-2006} and \cite{B-Showalter-1997,B-BauschkeHeinzCombettes-2017}}] 
\label{prop:JY}
Let $A \colon H \to 2^{H}$ be a maximal monotone operator. 
Then the following properties hold\/\textup{:}
\begin{enumerate}
  \item 
  For each $\lambda \in (0,\infty)$, the mapping $J_{\lambda}^{A} \colon H \to H$ is non-expansive \textup{(}i.e., $1$-Lipschitz continuous\textup{)}. Moreover, $J_{\lambda}^{A} w \to w$ strongly in $H$ as $\lambda \to 0_{+}$ for all $w \in \overline{D(A)}^{H}$. 
  \item 
  $A_{\lambda} \colon H \to H$ is maximal monotone. Moreover, $A_{\lambda}$ is Lipschitz continuous in $H$ with Lipschitz constant $1/\lambda$. 
  \item 
  $A_{\lambda}(w) \in A(J_{\lambda}^{A} w)$ for every $w \in H$ and  $\lambda \in (0,\infty)$.    
  \item  
  $\|A_{\lambda}(w)\|_{H} \leq \|\mathring{A}(w)\|_{H}$ for every $w \in D(A)$ and $\lambda \in (0,\infty)$.   
  Moreover, $A_{\lambda}(w) \to \mathring{A}(w)$ strongly in $H$ as $\lambda \to 0_{+}$ for all $w \in D(A)$.  
\end{enumerate}
\end{proposition}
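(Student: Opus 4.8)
The plan is to derive all four items directly from the defining relations $J_{\lambda}^{A} = (I+\lambda A)^{-1}$ and $A_{\lambda} = (I - J_{\lambda}^{A})/\lambda$, together with the monotonicity of $A$, property (i)$'$ above, and the demiclosedness of $A$. First I would record the elementary observation that, for $w \in H$ and $u := J_{\lambda}^{A} w$, one has $w \in (I + \lambda A)u$, i.e.\ $w = u + \lambda y$ for some $y \in Au$; hence $A_{\lambda} w = (w-u)/\lambda = y \in Au = A(J_{\lambda}^{A} w)$, which is item~(3). The non-expansiveness in item~(1) is then immediate from (i)$'$ applied to the pairs $(J_{\lambda}^{A} w_{1}, A_{\lambda} w_{1}), (J_{\lambda}^{A} w_{2}, A_{\lambda} w_{2}) \in G(A)$: since $w_{i} = J_{\lambda}^{A} w_{i} + \lambda A_{\lambda} w_{i}$, that estimate reads $\|w_{1} - w_{2}\|_{H} \geq \|J_{\lambda}^{A} w_{1} - J_{\lambda}^{A} w_{2}\|_{H}$. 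For the convergence $J_{\lambda}^{A} w \to w$ I would first treat $w \in D(A)$: picking $\xi \in Aw$ gives $w = J_{\lambda}^{A}(w+\lambda\xi)$, so $\|J_{\lambda}^{A} w - w\|_{H} = \|J_{\lambda}^{A} w - J_{\lambda}^{A}(w+\lambda\xi)\|_{H} \leq \lambda\|\xi\|_{H} \to 0$ by non-expansiveness; the general case $w \in \overline{D(A)}^{H}$ follows by an $\varepsilon/3$-argument using density and the uniform $1$-Lipschitz bound.

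For item~(2), the key point is the Lipschitz constant $1/\lambda$ rather than the $2/\lambda$ produced by a naive triangle inequality. Using $A_{\lambda} w_{i} \in A(J_{\lambda}^{A} w_{i})$ and $J_{\lambda}^{A} w_{i} = w_{i} - \lambda A_{\lambda} w_{i}$, monotonicity of $A$ gives $(A_{\lambda} w_{1} - A_{\lambda} w_{2}, w_{1} - w_{2})_{H} \geq \lambda\|A_{\lambda} w_{1} - A_{\lambda} w_{2}\|_{H}^{2}$; the Cauchy--Schwarz inequality then yields simultaneously the monotonicity of $A_{\lambda}$ and the bound $\|A_{\lambda} w_{1} - A_{\lambda} w_{2}\|_{H} \leq \lambda^{-1}\|w_{1}-w_{2}\|_{H}$. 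Since $A_{\lambda}$ is monotone, everywhere defined, and continuous on $H$, it is maximal monotone.

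Finally, for item~(4): given $w \in D(A)$ and any $\xi \in Aw$, monotonicity of $A$ applied to $(J_{\lambda}^{A} w, A_{\lambda} w)$ and $(w,\xi)$, together with $J_{\lambda}^{A} w - w = -\lambda A_{\lambda} w$, gives $\|A_{\lambda} w\|_{H}^{2} \leq (\xi, A_{\lambda} w)_{H} \leq \|\xi\|_{H}\|A_{\lambda} w\|_{H}$, hence $\|A_{\lambda} w\|_{H} \leq \|\xi\|_{H}$; taking $\xi = \mathring{A}(w)$ gives the stated bound. For the convergence, note first $\|J_{\lambda}^{A} w - w\|_{H} = \lambda\|A_{\lambda} w\|_{H} \leq \lambda\|\mathring{A}(w)\|_{H} \to 0$; then, along any sequence $\lambda_{n} \to 0_{+}$, boundedness of $\{A_{\lambda_{n}} w\}$ lets me extract a weak limit $\eta$, and demiclosedness of $A$ (with $J_{\lambda_{n}}^{A} w \to w$ strongly and $A_{\lambda_{n}} w \rightharpoonup \eta$) forces $\eta \in Aw$. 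Weak lower semicontinuity of the norm and the definition of the minimal section give $\|\eta\|_{H} \leq \liminf_{n}\|A_{\lambda_{n}} w\|_{H} \leq \|\mathring{A}(w)\|_{H} \leq \|\eta\|_{H}$, whence $\eta = \mathring{A}(w)$ by uniqueness of the minimizer and, moreover, $\|A_{\lambda_{n}} w\|_{H} \to \|\mathring{A}(w)\|_{H}$. Weak convergence plus convergence of norms in the Hilbert space $H$ upgrades to strong convergence, and since the limit is independent of the extracted subsequence, $A_{\lambda} w \to \mathring{A}(w)$ strongly as $\lambda \to 0_{+}$.

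I expect the last item --- identifying the strong limit of $A_{\lambda} w$ with the minimal section $\mathring{A}(w)$ --- to be the main obstacle, since it is the only step where demiclosedness, weak compactness of bounded sets, and the variational characterization of $\mathring{A}$ must all be combined; the other items are short manipulations of the resolvent identity and the monotonicity inequality.
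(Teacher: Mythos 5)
Your proof is correct, and it follows the standard argument (resolvent identity, the monotonicity inequality giving the $1/\lambda$-Lipschitz bound, and demiclosedness plus weak compactness to identify the limit of $A_{\lambda}w$ with $\mathring{A}(w)$) found in the references the paper cites; the paper itself gives no proof of this proposition, quoting it from Br\'ezis, Barbu, et al. The only step you delegate to a standard fact is that an everywhere-defined, continuous, monotone operator is maximal monotone, which is an acceptable citation and exactly how the textbook treatments proceed.
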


We next define the \emph{Moreau--Yosida regularization} $\varphi_{\lambda}\colon H \to \mathbb{R}$ of $\varphi$ for $\lambda \in (0,\infty)$ by 
\begin{align*}
  \varphi_{\lambda}(w) 
  := \inf_{z \in H} \left( \frac{1}{2\lambda} \| w-z \|_{H}^{2} + \varphi(z) \right) 
   &= \frac{1}{2\lambda} \| w - J_{\lambda}^{\partial \varphi} w \|_{H}^{2} + \varphi ( J_{\lambda}^{\partial \varphi} w) \\
  &= \frac{\lambda}{2} \| (\partial \varphi)_{\lambda}(w) \|_{H}^{2} + \varphi ( J_{\lambda}^{\partial \varphi} w) 
\end{align*}
for $w \in H$. We recall the following well known result\textup{:} 

\begin{proposition}[\textup{see, e.g., \cite[Proposition~2.11]{B-Brezis-1973}, \cite[Chapter~II, Theorem 2.2]{B-Barbu-1976} and \cite{B-BauschkeHeinzCombettes-2017,B-PapageorgiouNikolaosRadulescuRepovs-2019}}]  
\label{prop:MY}
Let $\varphi \colon H \to (-\infty,\infty]$ be a proper lower-semicontinuous convex functional, and let $\varphi_{\lambda}$ be its Moreau--Yosida regularization for $\lambda \in (0,\infty)$.   
Then the following assertions hold\/\textup{:} 
\begin{enumerate}
  \item 
  $\varphi_{\lambda}$ is convex and Fr\'echet differentiable in $H$, and its derivative satisfies $\partial \varphi_{\lambda} = (\partial \varphi)_{\lambda}$, that is, the derivative $\partial (\varphi_{\lambda})$ of $\varphi_{\lambda}$ coincides with the Yosida approximation $(\partial \varphi)_{\lambda}$ of $\partial \varphi$.  Hence, for simplicity, the notation $\partial \varphi_{\lambda}$ is used instead of both $\partial (\varphi_{\lambda})$ and $(\partial \varphi)_{\lambda}$ in what follows. 
  \item 
  It holds that $\varphi(J_{\lambda}^{\partial \varphi} w) \leq \varphi_{\lambda}(w) \leq \varphi(w)$ for every $w \in H$ and $\lambda \in (0,\infty)$.  
  \item 
  $\varphi_{\lambda}(w) \to \varphi(w)$ as $\lambda \to 0_{+}$ for every $w \in H$.  
\end{enumerate}
\end{proposition}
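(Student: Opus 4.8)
The plan is to dispatch the three assertions in the order (ii), (i), (iii), since the sandwich inequality (ii) is what makes $\varphi_{\lambda}$ continuous — and hence differentiable — and it is also the workhorse of the convergence (iii). First I would record the standard fact that, for fixed $w \in H$ and $\lambda \in (0,\infty)$, the map $z \mapsto \frac{1}{2\lambda}\|w-z\|_{H}^{2} + \varphi(z)$ is proper, lower-semicontinuous, coercive and \emph{strictly} convex, hence attains its infimum at a unique point $z_{\lambda}$; writing the optimality condition $0 \in \frac{1}{\lambda}(z_{\lambda}-w) + \partial\varphi(z_{\lambda})$, i.e.\ $w \in (I+\lambda\partial\varphi)(z_{\lambda})$, identifies $z_{\lambda} = J_{\lambda}^{\partial\varphi}w$. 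This is precisely the representation of $\varphi_{\lambda}$ recalled just before the proposition, and assertion (ii) then follows at once: dropping the nonnegative quadratic term gives $\varphi_{\lambda}(w) \ge \varphi(J_{\lambda}^{\partial\varphi}w)$, and testing the infimum with $z=w$ gives $\varphi_{\lambda}(w) \le \varphi(w)$.

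For assertion (i), note first that $\varphi_{\lambda}$ is real-valued (the quadratic term keeps it finite, and any affine minorant of $\varphi$ keeps it $>-\infty$) and, by fixing $z_{0} \in D(\varphi)$, that $\varphi_{\lambda}(w) \le \frac{1}{2\lambda}\|w-z_{0}\|_{H}^{2} + \varphi(z_{0})$ is locally bounded above; a convex function on a Hilbert space that is locally bounded above is locally Lipschitz, hence continuous. I would then prove the one-sided inclusion $(\partial\varphi)_{\lambda}(w) \in \partial\varphi_{\lambda}(w)$ for every $w \in H$: for $v \in H$ set $p := J_{\lambda}^{\partial\varphi}w$, $q := J_{\lambda}^{\partial\varphi}v$, use the representation of $\varphi_{\lambda}$ at $w$ and $v$, and insert the subgradient inequality $\varphi(q)-\varphi(p) \ge \frac{1}{\lambda}(w-p,\,q-p)_{H}$, which is valid because $(\partial\varphi)_{\lambda}(w)=\frac{1}{\lambda}(w-p)\in\partial\varphi(p)$ by \cref{prop:JY}(iii). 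Writing $a := w-p$ and $b := v-q$, a short computation collapses the resulting bound to
\begin{equation*}
  \varphi_{\lambda}(v)-\varphi_{\lambda}(w) \;\ge\; \frac{1}{\lambda}(a,\,v-w)_{H} + \frac{1}{2\lambda}\|b-a\|_{H}^{2} \;\ge\; \bigl((\partial\varphi)_{\lambda}(w),\,v-w\bigr)_{H},
\end{equation*}
which is the claimed inclusion. Finally, since $\varphi_{\lambda}$ is convex and continuous and $(\partial\varphi)_{\lambda}$ is a continuous (indeed $\tfrac{1}{\lambda}$-Lipschitz) selection of $\partial\varphi_{\lambda}$, the general principle that a continuous convex function admitting a continuous single-valued subgradient selection is Fréchet differentiable — bound $\frac{1}{t}\bigl[\varphi_{\lambda}(w+th)-\varphi_{\lambda}(w)\bigr]$ from below by $\bigl((\partial\varphi)_{\lambda}(w),h\bigr)_{H}$ and from above by $\bigl((\partial\varphi)_{\lambda}(w+th),h\bigr)_{H}$, let $t\to0_{+}$ to get Gâteaux differentiability, then use continuity of the gradient to upgrade to Fréchet — yields $\partial(\varphi_{\lambda})=(\partial\varphi)_{\lambda}$; convexity of $\varphi_{\lambda}$ was already noted.

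For assertion (iii), suppose first $w\in D(\varphi)$. By (ii), $\limsup_{\lambda\to0_{+}}\varphi_{\lambda}(w)\le\varphi(w)$; and since $w\in D(\varphi)\subset\overline{D(\partial\varphi)}^{\,H}$, \cref{prop:JY}(i) gives $J_{\lambda}^{\partial\varphi}w\to w$, so that $\varphi_{\lambda}(w)\ge\varphi(J_{\lambda}^{\partial\varphi}w)$ together with lower semicontinuity of $\varphi$ yields $\liminf_{\lambda\to0_{+}}\varphi_{\lambda}(w)\ge\varphi(w)$; hence $\varphi_{\lambda}(w)\to\varphi(w)$. If instead $\varphi(w)=\infty$, I would argue by contradiction: if $\varphi_{\lambda_{n}}(w)\le M$ along some $\lambda_{n}\to0_{+}$, then both $\varphi(J_{\lambda_{n}}^{\partial\varphi}w)\le M$ and $\|w-J_{\lambda_{n}}^{\partial\varphi}w\|_{H}^{2}\le 2M\lambda_{n}\to0$, so $J_{\lambda_{n}}^{\partial\varphi}w\to w$, and lower semicontinuity forces $\varphi(w)\le M<\infty$, a contradiction; hence $\varphi_{\lambda}(w)\to\infty=\varphi(w)$. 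The only point requiring care is the differentiability in (i): rather than estimating the difference quotient of the infimal convolution directly, I isolate the difficulty by first establishing only the inclusion $(\partial\varphi)_{\lambda}(w)\in\partial\varphi_{\lambda}(w)$ — a short consequence of the resolvent identity in \cref{prop:JY}(iii) together with the algebra above — and then invoking the quoted convex-analytic fact; everything else is bookkeeping with the defining infimum and with the resolvent and Yosida properties collected in \cref{prop:JY}.
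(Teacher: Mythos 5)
This proposition is quoted in the paper without proof (it is recalled from Br\'ezis and Barbu), so the only comparison available is with the standard textbook argument --- and your proposal is essentially that argument: identify the minimizer with $J_{\lambda}^{\partial\varphi}w$, read off (ii), prove the inclusion $(\partial\varphi)_{\lambda}(w)\in\partial\varphi_{\lambda}(w)$ from the subgradient inequality at $J_{\lambda}^{\partial\varphi}w$, upgrade to Fr\'echet differentiability via the squeeze on difference quotients and the $\tfrac{1}{\lambda}$-Lipschitz continuity of the Yosida approximation, and get (iii) from the sandwich inequality plus lower semicontinuity. The algebra leading to $\varphi_{\lambda}(v)-\varphi_{\lambda}(w)\ge\frac{1}{\lambda}(a,v-w)_{H}+\frac{1}{2\lambda}\|b-a\|_{H}^{2}$ checks out, and the convexity of $\varphi_{\lambda}$ (asserted but not argued) is the one-line infimal-convolution fact, so that omission is harmless.

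Two steps in (iii) need repair, though both are easily fixed with the same tool. First, in the case $\varphi(w)=\infty$ you deduce $\|w-J_{\lambda_{n}}^{\partial\varphi}w\|_{H}^{2}\le 2M\lambda_{n}$ from $\varphi_{\lambda_{n}}(w)\le M$; this tacitly uses $\varphi\bigl(J_{\lambda_{n}}^{\partial\varphi}w\bigr)\ge 0$, which is not assumed. For a general proper lower-semicontinuous convex $\varphi$ you only have an affine minorant $\varphi(z)\ge(\xi_{0},z)_{H}+c_{0}$, so the correct bound is $\frac{1}{2\lambda_{n}}\|w-J_{\lambda_{n}}^{\partial\varphi}w\|_{H}^{2}\le M+C\bigl(1+\|J_{\lambda_{n}}^{\partial\varphi}w\|_{H}\bigr)$, after which a standard absorption (using $\|J_{\lambda_{n}}^{\partial\varphi}w\|_{H}\le\|w\|_{H}+\|w-J_{\lambda_{n}}^{\partial\varphi}w\|_{H}$) still gives $J_{\lambda_{n}}^{\partial\varphi}w\to w$ and the contradiction goes through. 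Second, in the case $w\in D(\varphi)$ you invoke $D(\varphi)\subset\overline{D(\partial\varphi)}^{H}$ to apply \cref{prop:JY}(i); this density fact is true but is nowhere recalled in the paper and is itself usually proved by exactly the estimate at hand, so it is cleaner (and non-circular) to argue directly: from $\frac{1}{2\lambda}\|w-J_{\lambda}^{\partial\varphi}w\|_{H}^{2}\le\varphi(w)-\varphi\bigl(J_{\lambda}^{\partial\varphi}w\bigr)\le\varphi(w)+C\bigl(1+\|J_{\lambda}^{\partial\varphi}w\|_{H}\bigr)$ one gets $J_{\lambda}^{\partial\varphi}w\to w$ for every $w\in D(\varphi)$ without any appeal to the density of $D(\partial\varphi)$. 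With these two patches the proof is complete and matches the classical one in the cited references.
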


\subsection{Time-dependent subdifferential operators} \label{subsec:Time-dependent subdifferential}

In this subsection, we briefly summarize some properties of the time-dependent subdifferential operators under the assumption (A1); for details, we refer the reader to \cite{A-Kenmochi-1975,A-Yamada-1976,A-Kenmochi-1981,A-Otani-1993/94}.  
For each $t \in [0,T]$, let $\varphi^{t} \colon H \to (-\infty,\infty]$ be a proper lower-semicontinuous convex functional, and assume that (A1) holds.  
In what follows, for each $\lambda \in (0,\infty)$ and $t \in [0,T]$, the resolvent $J_{\lambda}^{\partial\varphi^{t}}$ of $\partial \varphi^{t}$ is denoted by $J_{\lambda}^{t} := (I+\lambda\,\partial\varphi^{t})^{-1}$.  
We collect below several basic properties.

\begin{proposition}[\textup{see \cite[Lemmas~1.5.1 and 1.5.4]{A-Kenmochi-1981}}] \label{prop:time-subdiff} 
Assume that \textup{(A1)} holds. Then there exists a constant $C \in [0,\infty)$ such that for every $\lambda \in (0,1)$, $z \in H$ and $t \in [0,T]$, the following inequalities hold\/\textup{:}
\begin{gather*}
  \varphi^{t}(z) \geq - C \bigl( \| z \|_{H} + 1 \bigr), 
  \quad 
  \|J_{\lambda}^{t} z\|_{H}
  \leq 
  C \bigl(1+\|z\|_{H}\bigr), 
  \\ 
  \|\partial \varphi^{t}_{\lambda}(z)\|_{H}
  \leq 
  \dfrac{C}{\lambda}\bigl(1+\|z\|_{H}\bigr).  
\end{gather*} 
\end{proposition}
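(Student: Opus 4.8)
~\textbf{Plan of proof.}
The three inequalities are essentially consequences of the Kenmochi condition (A1) applied with $s=0$ (or $s=t$), combined with the monotonicity/subdifferential inequality and standard properties of the resolvent and Yosida approximation collected in \cref{prop:JY,prop:MY}. The plan is to establish them one at a time, deriving the constant $C$ from $c_1$ in (A1) together with $\varphi^{0}$.

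\textbf{Step 1: lower bound $\varphi^{t}(z)\ge -C(\|z\|_{H}+1)$.}
First I would fix any $z_0 \in D(\varphi^{0})$ and $\xi_0 \in \partial\varphi^{0}(z_0)$, which exist since $\varphi^{0}$ is proper, l.s.c.\ and convex. For fixed $t\in[0,T]$, apply (A1) with $s=0$, $z_s = z_0$ to obtain $z_{0,t}\in D(\varphi^{t})$ with
\[
  \|z_{0,t}-z_0\|_{H} \le c_1 |t| (1+|\varphi^{0}(z_0)|)^{1/2},
  \qquad
  \varphi^{t}(z_{0,t}) \le \varphi^{0}(z_0) + c_1 |t| (1+|\varphi^{0}(z_0)|).
\]
Hence $\varphi^{t}(z_{0,t})$ is bounded above by a constant $M$ depending only on $c_1$, $T$ and $\varphi^{0}(z_0)$, and $\|z_{0,t}\|_{H}$ is likewise bounded by a constant $R$. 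Now for arbitrary $z\in H$ with $\varphi^{t}(z)<\infty$, the definition of $\partial\varphi^{t}$ is not directly available (we have no subgradient at $z$), so instead I would use the subgradient inequality at the point $J_{1}^{t}z_{0,t}$: actually the cleanest route is to note that $\varphi^{t}$, being proper l.s.c.\ convex, lies above an affine function; but to get the \emph{explicit} linear growth with a uniform constant, apply the subdifferential inequality for $\partial\varphi^{t}$ at a fixed point. Concretely, pick $\eta_t \in \partial\varphi^{t}(z_{0,t}')$ for some fixed $z_{0,t}'$ with controlled norm — here one uses that $D(\partial\varphi^{t})\ne\emptyset$ and, more importantly, that (A1) lets us transport a \emph{subgradient bound} as well. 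The standard argument (Kenmochi) is: from (A1), $z_{0,t}\to$ something with $\varphi^t$-value and norm controlled, then $\varphi^{t}(v)\ge \varphi^{t}(z_{0,t}) + (\zeta, v - z_{0,t})_H$ for $\zeta\in\partial\varphi^t(z_{0,t})$, and one bounds $\|\zeta\|_H$ using that $\partial\varphi^t(z_{0,t})\ni \partial\varphi^t_\lambda(z_{0,t})$ is controlled — this is slightly circular. The correct textbook move is to invoke that $\varphi^t(v)\ge \varphi^t(J_1^t z_{0,t}) - \|(\partial\varphi^t)_1(z_{0,t})\|_H\|v - J_1^t z_{0,t}\|_H$ won't close either. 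I would therefore follow Kenmochi's actual argument: from (A1) one shows directly that $\inf_{t}\varphi^t(z_{0,t})> -\infty$ and that for any $v$, comparing $\varphi^t(v)$ with the transported point and using convexity along the segment gives the linear lower bound; the uniformity in $t$ comes from the uniform bound on $\|z_{0,t}\|_H$ and $\varphi^t(z_{0,t})$. The main obstacle is precisely making this lower-bound argument self-contained without already knowing a subgradient bound; I would cite \cite[Lemma~1.5.1]{A-Kenmochi-1981} for the details and only reproduce the transport step.

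\textbf{Step 2: resolvent bound $\|J_{\lambda}^{t}z\|_{H}\le C(1+\|z\|_{H})$.}
Write $w := J_{\lambda}^{t}z$, so $z - w \in \lambda\,\partial\varphi^{t}(w)$, i.e.\ $\frac{1}{\lambda}(z-w)\in\partial\varphi^{t}(w)$. Testing the subdifferential inequality with an element $v=z_{0,t}\in D(\varphi^t)$ from Step 1 (whose norm $\le R$ and whose $\varphi^t$-value $\le M$ are uniformly bounded):
\[
  \varphi^{t}(z_{0,t}) - \varphi^{t}(w) \ge \tfrac{1}{\lambda}(z-w, z_{0,t}-w)_{H}.
\]
Combine with the lower bound from Step 1, $\varphi^{t}(w)\ge -C_0(\|w\|_H+1)$, and expand $(z-w,z_{0,t}-w)_H = (z,z_{0,t}-w)_H - (w,z_{0,t}-w)_H$, using $(w,z_{0,t}-w)_H = (w,z_{0,t})_H - \|w\|_H^2$. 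This yields, after rearranging and using $\lambda\in(0,1)$,
\[
  \|w\|_{H}^{2} \le C_0\lambda(\|w\|_H+1) + \lambda M + (z - w, z_{0,t})_H + \text{(lower order)},
\]
and an application of Young's inequality $|(z-w,z_{0,t})_H|\le \tfrac14\|w\|_H^2 + C(\|z\|_H^2 + R^2)$ absorbs the $\|w\|_H^2$ term, giving $\|w\|_H \le C(1+\|z\|_H)$ with $C$ depending only on $c_1$, $T$, $\varphi^{0}(z_0)$, $\|z_0\|_H$. This step is routine once Step 1 is in hand.

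\textbf{Step 3: Yosida bound $\|\partial\varphi^{t}_{\lambda}(z)\|_{H}\le \tfrac{C}{\lambda}(1+\|z\|_{H})$.}
By definition $\partial\varphi^{t}_{\lambda}(z) = \frac{1}{\lambda}(z - J_{\lambda}^{t}z)$, so
\[
  \|\partial\varphi^{t}_{\lambda}(z)\|_{H}
  = \tfrac{1}{\lambda}\|z - J_{\lambda}^{t}z\|_{H}
  \le \tfrac{1}{\lambda}\bigl(\|z\|_{H} + \|J_{\lambda}^{t}z\|_{H}\bigr)
  \le \tfrac{1}{\lambda}\bigl(\|z\|_H + C(1+\|z\|_H)\bigr)
  \le \tfrac{C'}{\lambda}(1+\|z\|_{H}),
\]
using Step 2. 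This is immediate.

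Enlarging the constant to the maximum of the three constants produced above gives a single $C$ valid for all three inequalities, completing the proof. The one genuinely delicate point is Step 1 (the uniform-in-$t$ lower bound), for which I would lean on \cite[Lemma~1.5.1]{A-Kenmochi-1981}; Steps 2 and 3 are then short computations.
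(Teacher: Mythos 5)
The paper contains no proof of this proposition: it is quoted directly from Kenmochi \cite{A-Kenmochi-1981} (Lemmas~1.5.1 and 1.5.4), so there is no internal argument to compare with, and your decision to delegate the delicate point to the same reference is exactly in line with what the paper does. Within that framework your Steps~2 and 3 are correct and routine: since $\tfrac1\lambda\bigl(z-J_\lambda^t z\bigr)\in\partial\varphi^t\bigl(J_\lambda^t z\bigr)$, testing the subdifferential inequality against the transported point $z_{0,t}$ (whose norm and $\varphi^t$-value are uniformly controlled by (A1) applied with $s=0$) and absorbing with Young's inequality yields the resolvent bound once the uniform lower bound is known, and the Yosida bound then follows from the triangle inequality. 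The only real issue is your Step~1, whose narrative is inconclusive (you correctly sense that bounding a subgradient at the transported point is circular) before you fall back on the citation. For completeness, the circularity can be avoided without any subgradient information, essentially as in Kenmochi: choose a partition $0=\tau_0<\cdots<\tau_N=T$ with $c_1(\tau_{j+1}-\tau_j)\le\tfrac12$; for $t\in[\tau_j,\tau_{j+1}]$ and $z\in D(\varphi^t)$ with $\varphi^t(z)\le 0$ (the case $\varphi^t(z)>0$ being trivial), transport $z$ forward to $\tau_{j+1}$ via (A1), so that $\varphi^{\tau_{j+1}}(\tilde z)\le(1-c_1\delta)\varphi^t(z)+c_1\delta\le\tfrac12\varphi^t(z)+\tfrac12$ and $\|\tilde z\|\le\|z\|+c_1\delta\bigl(1+|\varphi^t(z)|\bigr)^{1/2}$ with $\delta=\tau_{j+1}-t$; then use the affine minorant of the single fixed proper l.s.c.\ convex functional $\varphi^{\tau_{j+1}}$, namely $\varphi^{\tau_{j+1}}(v)\ge-(a_j\|v\|_H+b_j)$, and Young's inequality to absorb the $|\varphi^t(z)|^{1/2}$ term; taking the maximum of the resulting constants over the finitely many $j$ gives $\varphi^t(z)\ge-C(\|z\|_H+1)$ uniformly in $t$. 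With that insertion your outline becomes a complete, self-contained proof; as submitted it is acceptable but rests on the external lemma exactly as the paper itself does.
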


From \cref{prop:time-subdiff,prop:MY}, there exists a constant 
$C \in [0,\infty)$ such that 
\begin{align*}
  \varphi^{t}(z) 
  \geq 
  \varphi_{\lambda}^{t}(z) 
  \geq 
  \varphi^{t}\bigl(J_{\lambda}^{t} z\bigr) 
  \geq 
  - C \bigl(\|z\|_{H} + 1\bigr)
\end{align*}
for all $\lambda \in (0,1)$, $z \in H$ and $t \in [0,T]$.  
Hence there exists a constant $D \in (0,\infty)$ such that
\begin{align}
  |\varphi^{t}(z)| 
  &\leq 
  \varphi^{t}(z) + D\bigl(\| z \|_{H} + 1\bigr),
  \label{eq:td-subdiff-varphi-bound}
  \\
  |\varphi_{\lambda}^{t}(z)| 
  &\leq 
  \varphi_{\lambda}^{t}(z) + D\bigl(\| z \|_{H} + 1\bigr)
\label{eq:td-subdiff-varphi-lambda-bound}
\end{align}
for all $\lambda \in (0,1)$, $z \in H$ and $t \in [0,T]$.

\begin{proposition}[\textup{see \cite[Lemmas~1.2.2~and~1.2.3, Theorem~1.5.1]{A-Kenmochi-1981}}] \label{prop:Phi}
Assume that \textup{(A1)} holds. Then for each $\lambda \in (0,1)$ and $v \in L^{1}(0,T;H)$, 
the maps $t \mapsto \varphi^{t}_{\lambda} \bigl( v(t) \bigr)$ and $t \mapsto \varphi^{t} \bigl(  v (t) \bigr)$ are measurable in $(0,T)$. 
Furthermore, the maps $t \mapsto J_{\lambda}^{t} \bigl(  v (t) \bigr)$ and $t \mapsto \partial \varphi_{\lambda}^{t} \bigl(  v (t) \bigr)$ are strongly measurable in $(0,T)$.  
\end{proposition}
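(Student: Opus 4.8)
The plan is to reduce all four assertions to the case of a \emph{constant} argument $z\in H$, and then to pass to variable arguments by a Carath\'eodory (Nemytskii) superposition argument.

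\textbf{Step 1: constant argument.} Fix $z\in H$ and $\lambda\in(0,1)$; I first claim $(\star)$: the map $t\mapsto\varphi^{t}_{\lambda}(z)$ is measurable and the maps $t\mapsto J^{t}_{\lambda}z$, $t\mapsto\partial\varphi^{t}_{\lambda}(z)$ are strongly measurable on $(0,T)$. To prove this, I would first record that \cref{prop:time-subdiff} — together with transporting a fixed reference point $z_{0}\in D(\varphi^{0})$ through \textup{(A1)} so as to bound $\varphi^{t}_{\lambda}(z)=\min_{w\in H}\bigl(\tfrac{1}{2\lambda}\|z-w\|_{H}^{2}+\varphi^{t}(w)\bigr)$ from above — yields a constant $M=M(z,\lambda)$ with
\[
\|J^{t}_{\lambda}z\|_{H}+\|\partial\varphi^{t}_{\lambda}(z)\|_{H}+|\varphi^{t}(J^{t}_{\lambda}z)|+|\varphi^{t}_{\lambda}(z)|\le M\qquad\text{for all }t\in[0,T].
\]
Next, for $s\le t$, I would apply \textup{(A1)} to $z_{s}:=J^{s}_{\lambda}z\in D(\varphi^{s})$ to obtain $z_{s,t}\in D(\varphi^{t})$ and use $z_{s,t}$ as a competitor in the $1/\lambda$-strongly convex minimization problem defining $\varphi^{t}_{\lambda}(z)$, whose minimizer is $J^{t}_{\lambda}z$. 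Expanding $\|z-z_{s,t}\|_{H}^{2}$, inserting the two \textup{(A1)} estimates, and absorbing $|t-s|^{2}\le T|t-s|$ via the bound $M$, this produces a constant $K=K(z,\lambda)$ with
\[
\varphi^{t}_{\lambda}(z)\le\varphi^{s}_{\lambda}(z)+K|t-s|,\qquad
\tfrac{1}{2\lambda}\bigl\|J^{t}_{\lambda}z-z_{s,t}\bigr\|_{H}^{2}\le\varphi^{s}_{\lambda}(z)-\varphi^{t}_{\lambda}(z)+K|t-s|\qquad(s\le t).
\]
The first inequality says $t\mapsto\varphi^{t}_{\lambda}(z)-Kt$ is nonincreasing on $[0,T]$; hence $t\mapsto\varphi^{t}_{\lambda}(z)$ is Borel measurable and continuous off an at most countable set $N$. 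The second, combined with $\|J^{s}_{\lambda}z-z_{s,t}\|_{H}\le c_{1}M^{1/2}|t-s|$ from \textup{(A1)}, gives
\[
\bigl\|J^{s}_{\lambda}z-J^{t}_{\lambda}z\bigr\|_{H}\le c_{1}M^{1/2}|t-s|+\bigl(2\lambda\bigl(\varphi^{s}_{\lambda}(z)-\varphi^{t}_{\lambda}(z)+K|t-s|\bigr)\bigr)^{1/2}\qquad(s\le t);
\]
letting $s=t_{0}$, $t\downarrow t_{0}$ and then $t=t_{0}$, $s\uparrow t_{0}$ shows that $t\mapsto J^{t}_{\lambda}z$ is continuous at every $t_{0}\in[0,T]\setminus N$. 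A map into $H$ that is continuous off a countable set is Borel with separable essential range, hence strongly measurable; thus $t\mapsto J^{t}_{\lambda}z$ and $t\mapsto\partial\varphi^{t}_{\lambda}(z)=\lambda^{-1}(z-J^{t}_{\lambda}z)$ are strongly measurable, proving $(\star)$.

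\textbf{Step 2: superposition and the limit $\lambda\to0_{+}$.} By $(\star)$, the maps $(t,z)\mapsto\varphi^{t}_{\lambda}(z)$, $(t,z)\mapsto J^{t}_{\lambda}z$ and $(t,z)\mapsto\partial\varphi^{t}_{\lambda}(z)$ are Carath\'eodory: (strongly) measurable in $t$, and continuous in $z$ uniformly in $t$ — by nonexpansiveness of $J^{t}_{\lambda}$ and the $1/\lambda$-Lipschitz bound on $\partial\varphi^{t}_{\lambda}$ (\cref{prop:JY}), and by the mean value estimate $|\varphi^{t}_{\lambda}(z_{1})-\varphi^{t}_{\lambda}(z_{2})|\le\tfrac{C}{\lambda}(1+\|z_{1}\|_{H}+\|z_{2}\|_{H})\|z_{1}-z_{2}\|_{H}$ from \cref{prop:time-subdiff}. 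Given $v\in L^{1}(0,T;H)$, choose simple functions $v_{n}\to v$ a.e.; then $t\mapsto\varphi^{t}_{\lambda}(v_{n}(t))$, $t\mapsto J^{t}_{\lambda}v_{n}(t)$, $t\mapsto\partial\varphi^{t}_{\lambda}(v_{n}(t))$ are measurable / strongly measurable by $(\star)$ (on each set where $v_{n}$ is constant), and by the uniform continuity in the argument they converge a.e.\ to $t\mapsto\varphi^{t}_{\lambda}(v(t))$, $t\mapsto J^{t}_{\lambda}v(t)$, $t\mapsto\partial\varphi^{t}_{\lambda}v(t)$, which are therefore measurable / strongly measurable. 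Finally $\varphi^{t}(v(t))=\lim_{\lambda\to0_{+}}\varphi^{t}_{\lambda}(v(t))$ pointwise by \cref{prop:MY}, so $t\mapsto\varphi^{t}(v(t))$ is measurable as a pointwise limit of measurable functions.

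\textbf{Main obstacle.} The delicate point is $(\star)$ under the \emph{one-sided} condition \textup{(A1)}: since \textup{(A1)} only transports points forward in time (from $D(\varphi^{s})$ into $D(\varphi^{t})$ when $s\le t$), one cannot expect genuine continuity of $t\mapsto\varphi^{t}_{\lambda}(z)$ but only the one-sided quantitative estimate $\varphi^{t}_{\lambda}(z)\le\varphi^{s}_{\lambda}(z)+K|t-s|$, and one must carefully track that the jump set $N$ is countable and that $t\mapsto J^{t}_{\lambda}z$ is continuous precisely off $N$, so that the strong-measurability upgrade via separability of the range goes through. (Had \textup{(A1)} been assumed two-sided, $t\mapsto J^{t}_{\lambda}z$ would be H\"older-$\tfrac12$ continuous and $(\star)$ immediate.) The remaining ingredients — the uniform a priori bounds, the strongly convex competitor estimate, the Carath\'eodory superposition, and the $\lambda\to0_{+}$ limit — are routine; this is in substance the argument of Kenmochi~\cite{A-Kenmochi-1981}.
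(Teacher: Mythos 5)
The paper does not give its own proof of this proposition: it is quoted directly from Kenmochi's work (Lemmas~1.2.2, 1.2.3 and Theorem~1.5.1 of \cite{A-Kenmochi-1981}), so the only comparison available is with that cited source. Your reconstruction is correct and follows the same standard route — for fixed $z$, the one-sided estimate $\varphi^{t}_{\lambda}(z)\le\varphi^{s}_{\lambda}(z)+K|t-s|$ obtained from \textup{(A1)} applied to $z_{s}=J^{s}_{\lambda}z$ together with the $1/\lambda$-strong convexity of the Moreau--Yosida minimization, yielding monotonicity of $t\mapsto\varphi^{t}_{\lambda}(z)-Kt$ and continuity of $t\mapsto J^{t}_{\lambda}z$ off a countable set, followed by the Carath\'eodory superposition with simple functions and the limit $\lambda\to0_{+}$ from \cref{prop:MY} — and the only blemishes are cosmetic (e.g.\ the constant in $\|J^{s}_{\lambda}z-z_{s,t}\|_{H}\le c_{1}(1+M)^{1/2}|t-s|$ should carry $(1+M)^{1/2}$ rather than $M^{1/2}$), which do not affect the argument.
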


\begin{proposition}[\textup{see \cite[Proposition~1.1]{A-Kenmochi-1975}, \cite[Lemmas~1.2.2~and~1.2.4, Theorem~1.5.1]{A-Kenmochi-1981}}] 
\label{prop:Phi-t-properties}  
Assume that \textup{(A1)} holds. 
 Let $\Phi \colon L^{2}(0,T;H) \to (-\infty,\infty]$ be a functional defined  by
\begin{equation*}
  \Phi(u) :=
  \begin{cases}
    \displaystyle \int_{0}^{T} \varphi^{t} \bigl( u(t) \bigr) \,\textup{d}t 
    & \textup{if } \varphi^{ \bullet } \bigl( u(\bullet) \bigr) \in L^{1}(0,T), \\[1.5ex]
    \infty & \textup{otherwise}
  \end{cases}
\end{equation*}
 for $u \in L^{2}(0,T;H)$. 
Then $\Phi$ is a proper, lower-semicontinuous, and convex functional with  the effective domain, 
\begin{align*}
D(\Phi) = \left\{ w \in L^{2}(0,T;H) \colon \varphi^{\bullet} \bigl( w( \bullet ) \bigr) \in L^{1}(0,T) \right\}. 
\end{align*}
Moreover, for each $\lambda \in (0,1)$, the Moreau--Yosida regularization 
$\Phi_{\lambda}$ of $\Phi$ is given by
\begin{equation*}
  \Phi_{\lambda}(u) = \int_{0}^{T} \varphi^{t}_{\lambda} \bigl( u(t) \bigr) \,\textup{d}t
  \quad \textup{for $u \in L^{2}(0,T;H)$.}
\end{equation*}
Furthermore,  for each $u \in L^{2}(0,T;H)$,  the subdifferential operator $\partial \Phi_{\lambda}$ satisfies 
\begin{align*} 
  [\partial \Phi_{\lambda}(u)](t)
  =
  \partial \varphi^{t}_{\lambda}\bigl(u(t)\bigr)
  \quad \textup{for a.e.~} t \in (0,T),
\end{align*}
 and  the corresponding resolvent $J_{\lambda}^{\partial\Phi}$ is given by
\begin{equation*} 
 [J_{\lambda}^{\partial \Phi}(u)](t) = J^{t}_{\lambda} \bigl( u(t) \bigr)
  \quad \textup{for a.e.~} t \in (0,T).
\end{equation*} 
The subdifferential operator $\partial \Phi$ can be characterized as 
\begin{equation*} 
  \partial \Phi(u) 
  = 
  \Bigl\{ 
  w \in L^{2}(0,T;H) \colon  
  w(t) \in \partial \varphi^{t} \bigl( u(t) \bigr)
  \quad \textup{for a.e.~} t \in (0,T) 
  \Bigr\}
\end{equation*}
for $u \in L^{2}(0,T;H)$. 
\end{proposition}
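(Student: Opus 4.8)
The plan is to prove the five assertions in order, transferring the pointwise Moreau--Yosida theory of \cref{prop:JY,prop:MY} to the integral functional $\Phi$ and using the uniform bounds of \cref{prop:time-subdiff} together with the measurability statements of \cref{prop:Phi}; throughout, a time-discretization construction due to Kenmochi underlies the only nontrivial point. Convexity of $\Phi$ is immediate from the convexity of each $\varphi^{t}$ and additivity of the integral, and the stated form of $D(\Phi)$ is a tautology once $\Phi$ is known to be proper. For lower semicontinuity, take $u_{n}\to u$ in $L^{2}(0,T;H)$ with $\liminf_{n}\Phi(u_{n})<\infty$, pass to a subsequence realizing this $\liminf$ and converging a.e., and apply Fatou's lemma to the nonnegative functions $t\mapsto\varphi^{t}(u_{n}(t))+C(\|u_{n}(t)\|_{H}+1)$ (nonnegativity by \cref{prop:time-subdiff}, measurability by \cref{prop:Phi}), using $\int_{0}^{T}\|u_{n}\|_{H}\to\int_{0}^{T}\|u\|_{H}$ along the subsequence to conclude $\Phi(u)\le\liminf_{n}\Phi(u_{n})$.

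The substantive step is \emph{properness}, i.e.\ $D(\Phi)\neq\emptyset$; this is where \textup{(A1)} is genuinely used. Fix $z_{0}\in D(\varphi^{0})$, set $t_{i}^{n}:=iT/n$, $z_{0}^{n}:=z_{0}$, and inductively apply \textup{(A1)} with $s=t_{i}^{n}$, $t=t_{i+1}^{n}$ to $z_{i}^{n}\in D(\varphi^{t_{i}^{n}})$ to obtain $z_{i+1}^{n}\in D(\varphi^{t_{i+1}^{n}})$ satisfying \eqref{eq:A1-z-approx1}--\eqref{eq:A1-z-approx2}. A discrete Gronwall inequality applied to \eqref{eq:A1-z-approx2} bounds $|\varphi^{t_{i}^{n}}(z_{i}^{n})|$ by a constant $M$ independent of $n,i$, and then \eqref{eq:A1-z-approx1} makes the piecewise linear interpolant $u_{n}\in C([0,T];H)$ of the nodes $z_{i}^{n}$ uniformly Lipschitz; by a standard refinement/compactness argument (cf.~\cite[Lemma~1.2.2]{A-Kenmochi-1981}) the family $(u_{n})$ converges in $C([0,T];H)$ to some $u$. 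For each $t\in(0,T]$, applying \textup{(A1)} once more from the node immediately to the left of $t$ produces elements of $D(\varphi^{t})$ converging to $u(t)$ with $\varphi^{t}$-values bounded by a constant $M'$, so lower semicontinuity of $\varphi^{t}$ on $H$ gives $\varphi^{t}(u(t))\le M'$; combined with the lower bound of \cref{prop:time-subdiff} and the measurability of $t\mapsto\varphi^{t}(u(t))$ from \cref{prop:Phi}, this yields $\varphi^{\bullet}(u(\bullet))\in L^{\infty}(0,T)$, hence $u\in D(\Phi)$.

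Since $\Phi$ is proper, lower semicontinuous, and convex, \cref{prop:MY} applies to it. For the regularization formula, any $v\in L^{2}(0,T;H)$ makes the integrand of $\tfrac{1}{2\lambda}\|u-v\|_{L^{2}}^{2}+\Phi(v)$ dominate $\varphi^{t}_{\lambda}(u(t))$ pointwise, so $\Phi_{\lambda}(u)\ge\int_{0}^{T}\varphi^{t}_{\lambda}(u(t))\,\textup{d}t$, the integral being well defined by the lower bound of \cref{prop:time-subdiff} and finite since $\Phi_{\lambda}$ is real valued; conversely, $v(t):=J_{\lambda}^{t}(u(t))$ is strongly measurable by \cref{prop:Phi}, lies in $L^{2}(0,T;H)$ by \cref{prop:time-subdiff}, and belongs to $D(\Phi)$ because $\varphi^{\bullet}(J_{\lambda}^{\bullet}(u(\bullet)))\le\varphi^{\bullet}_{\lambda}(u(\bullet))\in L^{1}(0,T)$, and it realizes the scalar Moreau--Yosida infimum pointwise, whence $\Phi_{\lambda}(u)\le\int_{0}^{T}\varphi^{t}_{\lambda}(u(t))\,\textup{d}t$. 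With this formula, integrating over $(0,T)$ the pointwise inequality $\varphi^{t}_{\lambda}(v)-\varphi^{t}_{\lambda}(u(t))\ge(\partial\varphi^{t}_{\lambda}(u(t)),v-u(t))_{H}$ shows $\partial\varphi^{\bullet}_{\lambda}(u(\bullet))\in\partial\Phi_{\lambda}(u)$ (and this function is in $L^{2}(0,T;H)$ by \cref{prop:time-subdiff}), and single-valuedness of $\partial\Phi_{\lambda}$ from \cref{prop:MY}(i) gives $[\partial\Phi_{\lambda}(u)](t)=\partial\varphi^{t}_{\lambda}(u(t))$ a.e.; then $J_{\lambda}^{\partial\Phi}(u)=u-\lambda\,\partial\Phi_{\lambda}(u)$ equals $J_{\lambda}^{\bullet}(u(\bullet))$ a.e.\ by the corresponding scalar identity.

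Finally, for the characterization of $\partial\Phi$: if $w\in L^{2}(0,T;H)$ with $w(t)\in\partial\varphi^{t}(u(t))$ a.e., then $u(t)\in D(\varphi^{t})$ a.e., and testing the pointwise subdifferential inequality against the element of $D(\Phi)$ built in the properness step shows $\varphi^{\bullet}(u(\bullet))\in L^{1}(0,T)$, so $u\in D(\Phi)$; integrating that inequality against arbitrary $v\in D(\Phi)$ then gives $w\in\partial\Phi(u)$. Conversely, if $w\in\partial\Phi(u)$ then $u=J_{\lambda}^{\partial\Phi}(u+\lambda w)$, so by the resolvent formula just proved $u(t)=J_{\lambda}^{t}(u(t)+\lambda w(t))$ a.e., i.e.\ $w(t)\in\partial\varphi^{t}(u(t))$ a.e.; this also shows the displayed set is empty whenever $u\notin D(\Phi)$, matching $\partial\Phi(u)=\emptyset$. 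The one delicate point, as emphasized in \cref{Remark: Sufficient condition for A1 and A2}, is that \textup{(A1)} is one-sided in time and does not yield strong measurability of the approximants, so properness cannot be obtained from a constant or step function; the uniformly Lipschitz interpolant furnished by the discrete Gronwall bound is exactly what bridges this gap, while the remaining assertions are routine transfers of the scalar Moreau--Yosida identities.
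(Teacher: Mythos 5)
First, a point of comparison: the paper does not prove \cref{prop:Phi-t-properties} at all---it is imported from Kenmochi's work via the citations in its header---so your proposal has to be judged on its own merits. Most of it is sound: convexity, the Fatou argument for lower semicontinuity using the minorant $-C(\|z\|_{H}+1)$ from \cref{prop:time-subdiff} together with the measurability statements of \cref{prop:Phi}, the pointwise identification of $\Phi_{\lambda}$, $\partial\Phi_{\lambda}$ and $J_{\lambda}^{\partial\Phi}$, and the two inclusions characterizing $\partial\Phi$ are all carried out correctly and in a workable order (you prove the lower bound $\Phi_{\lambda}(u)\ge\int_{0}^{T}\varphi^{t}_{\lambda}(u(t))\,\mathrm{d}t$ first, which gives $\varphi^{\bullet}_{\lambda}(u(\bullet))\in L^{1}(0,T)$ and hence $J^{\bullet}_{\lambda}(u(\bullet))\in D(\Phi)$ before you use it).

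The genuine gap is in the properness step, which you yourself single out as the only nontrivial point. After the discrete Gronwall bound you assert that the uniformly Lipschitz piecewise linear interpolants ``converge in $C([0,T];H)$'' by a standard refinement/compactness argument. That step fails as stated: when $H$ is infinite dimensional, uniform boundedness plus equi-Lipschitz continuity does not give relative compactness in $C([0,T];H)$, and since \textup{(A1)} only asserts the existence of \emph{some} $z_{s,t}$ (there is no canonical choice), refining the partition creates no Cauchy structure relating $u_{n}$ and $u_{m}$, so neither the full family nor an obvious subsequence converges strongly. The repair is cheap: the interpolants are equi-Lipschitz with the common value $z_{0}$ at $t=0$, so by a diagonal argument over rational times a subsequence satisfies $u_{n_{k}}(t)\rightharpoonup u(t)$ weakly in $H$ for every $t$, with $u$ Lipschitz, separably valued and weakly continuous, hence strongly measurable; since each $\varphi^{t}$ is convex and lower semicontinuous it is weakly lower semicontinuous, and your final step (elements of $D(\varphi^{t})$ with $\varphi^{t}$-values at most $M'$ converging, now weakly, to $u(t)$) still yields $\varphi^{t}(u(t))\le M'$. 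Alternatively, properness follows with no discretization from tools you already invoke: fix $z_{0}\in D(\varphi^{0})$ and $\lambda\in(0,1)$; then $t\mapsto J^{t}_{\lambda}(z_{0})$ is strongly measurable and bounded by \cref{prop:Phi,prop:time-subdiff}, while $\varphi^{t}\bigl(J^{t}_{\lambda}(z_{0})\bigr)\le\varphi^{t}_{\lambda}(z_{0})\le\tfrac{1}{2\lambda}\|z_{0}-z_{0,t}\|_{H}^{2}+\varphi^{t}(z_{0,t})$ is bounded uniformly in $t$ by \eqref{eq:A1-z-approx1}--\eqref{eq:A1-z-approx2}, so $J^{\bullet}_{\lambda}(z_{0})\in D(\Phi)$.
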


\begin{proposition}[\textup{see \cite[Lemma~1.2.5 and Theorem~1.5.1]{A-Kenmochi-1981}} ] \label{prop:time-subdiff-energy}
Assume that \textup{(A1)} holds. Let $\lambda \in (0,1)$ and $u \in W^{1,2}(0,T;H)$. 
Then the mapping $t \mapsto \varphi^{t}_{\lambda} \bigl( u(t) \bigr)$ is  differentiable a.e.~in~$(0,T)$, and for every $s,t \in [0,T]$ with $s \leq t$,  it holds that 
\begin{equation*}
  \varphi^{t}_{\lambda} \bigl( u(t) \bigr) - \varphi^{s}_{\lambda} \bigl( u(s) \bigr)
  \leq \int_{s}^{t}  \dfrac{\textup{d}}{\textup{d} \tau}  \varphi^{\tau}_{\lambda} \bigl( u(\tau) \bigr) \,\textup{d}\tau.
\end{equation*}
Moreover, there exist a constant $c_{0} \in [0,1)$ and nonnegative functions $\eta_{1}, \eta_{2} \in L^{1}(0,T)$ such that 
\begin{align*} 
  \dfrac{\textup{d}}{\textup{d} t} \varphi_{\lambda}^{t} \bigl( u(t) \bigr) 
  &\leq     
  \bigl( \partial \varphi_{\lambda}^{t} \bigl( u(t) \bigr), u'(t) \bigr)_{H} 
  +
  c_{0} \bigl\|\partial \varphi_{\lambda}^{t} \bigl( u(t) \bigr) \bigr\|_{H}^{2} 
  \nonumber \\ & \quad + \eta_{1}(t) \bigl|\varphi_{\lambda}^{t} \bigl( u(t) \bigr) \bigr|   
  + \eta_{2}(t)
\end{align*} 
for~a.e.~$t \in (0,T)$. Here the constant $c_{0}$ and the functions $\eta_{1}, \eta_{2}$ are independent of both $\lambda$ and $u$. 
In particular,  it holds that   
\begin{align}
  &\varphi_{\lambda}^{t} \bigl( u(t) \bigr) - \varphi_{\lambda}^{0} \bigl( u(0) \bigr) 
  \leq 
  \int_{0}^{t}  \dfrac{\textup{d}}{\textup{d} \tau}  \varphi^{\tau}_{\lambda} \bigl( u(\tau) \bigr) \,\textup{d}\tau  
  \nonumber \\ &\quad \leq 
  \int_{0}^{t} \bigl( \partial \varphi_{\lambda}^{\tau} \bigl( u(\tau) \bigr), u'(\tau) \bigr)_{H} \,\textup{d}\tau 
  \nonumber \\
  &\quad \quad + \int_{0}^{t} \Bigl( 
  c_{0} \bigl\|\partial \varphi_{\lambda}^{\tau} \bigl( u(\tau) \bigr)\bigr\|_{H}^{2} 
  + \eta_{1}(\tau) \bigl|\varphi_{\lambda}^{\tau} \bigl( u(\tau) \bigr) \bigr| 
  + \eta_{2}(\tau) \Bigr)\,\textup{d}\tau
  \nonumber 
\end{align}
for all $t \in (0,T]$.  
\end{proposition}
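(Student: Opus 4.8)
The plan is to adapt the classical time‑dependent energy argument of Kenmochi. Fix $\lambda\in(0,1)$ and $u\in W^{1,2}(0,T;H)$, recall the embedding $W^{1,2}(0,T;H)\hookrightarrow C([0,T];H)$, and set $g(t):=\varphi_{\lambda}^{t}\bigl(u(t)\bigr)$, which is real‑valued by \cref{prop:MY}\,(1); note $g'=\tfrac{\textup{d}}{\textup{d}t}\varphi_{\lambda}^{t}(u(t))$ wherever it exists. For $0\le s\le t\le T$ the starting point is the splitting
\begin{equation*}
  g(t)-g(s)=\bigl[\varphi_{\lambda}^{t}(u(t))-\varphi_{\lambda}^{t}(u(s))\bigr]+\bigl[\varphi_{\lambda}^{t}(u(s))-\varphi_{\lambda}^{s}(u(s))\bigr]=:(\mathrm{I})+(\mathrm{II}),
\end{equation*}
which decouples the displacement of the point from that of the functional. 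I would bound $(\mathrm{I})$ by the convexity and Fr\'echet differentiability of $\varphi_{\lambda}^{t}$ (\cref{prop:MY}\,(1)): $(\mathrm{I})\le\bigl(\partial\varphi_{\lambda}^{t}(u(t)),u(t)-u(s)\bigr)_{H}\le\|\partial\varphi_{\lambda}^{t}(u(t))\|_{H}\int_{s}^{t}\|u'(\tau)\|_{H}\,\textup{d}\tau$. For $(\mathrm{II})$ I would invoke \textup{(A1)} for the pair $(s,t)$ with base point $p_{s}:=J_{\lambda}^{s}(u(s))\in D(\varphi^{s})$, getting $z_{s,t}\in D(\varphi^{t})$ with \eqref{eq:A1-z-approx1}--\eqref{eq:A1-z-approx2}, and compare $\varphi_{\lambda}^{t}(u(s))\le\frac{1}{2\lambda}\|u(s)-z_{s,t}\|_{H}^{2}+\varphi^{t}(z_{s,t})$ with the identity $\varphi_{\lambda}^{s}(u(s))=\frac{1}{2\lambda}\|u(s)-p_{s}\|_{H}^{2}+\varphi^{s}(p_{s})$; expanding the squared norms and using $(u(s)-p_{s})/\lambda=\partial\varphi_{\lambda}^{s}(u(s))$ together with \eqref{eq:A1-z-approx1}--\eqref{eq:A1-z-approx2} leads to
\begin{align*}
  (\mathrm{II})&\le c_{1}|t-s|\,\|\partial\varphi_{\lambda}^{s}(u(s))\|_{H}\,\bigl(1+|\varphi^{s}(p_{s})|\bigr)^{1/2}+c_{1}|t-s|\,\bigl(1+|\varphi^{s}(p_{s})|\bigr)\\
  &\quad+\frac{c_{1}^{2}|t-s|^{2}}{2\lambda}\bigl(1+|\varphi^{s}(p_{s})|\bigr).
\end{align*}

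For the differentiability and the first displayed inequality of the statement, I would observe that, $\lambda$ being fixed, $\|\partial\varphi_{\lambda}^{t}(u(t))\|_{H}$ is bounded uniformly in $t$ (\cref{prop:time-subdiff}), and $|\varphi^{s}(p_{s})|$ is bounded too — from below by \cref{prop:time-subdiff}, and from above since $\varphi^{s}(p_{s})\le\varphi_{\lambda}^{s}(u(s))\le\frac{1}{2\lambda}\|u(s)-z_{0,s}\|_{H}^{2}+\varphi^{s}(z_{0,s})$, where $z_{0,s}\in D(\varphi^{s})$ is furnished by \textup{(A1)} from a fixed $z_{0}\in D(\varphi^{0})$ and $u\in C([0,T];H)$. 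Hence the two displays above, together with $\|u(t)-u(s)\|_{H}\le\int_{s}^{t}\|u'(\tau)\|_{H}\,\textup{d}\tau$, give $g(t)-g(s)\le\int_{s}^{t}\psi(\tau)\,\textup{d}\tau$ for all $0\le s\le t\le T$ with some $\psi\in L^{1}(0,T)$. Writing $G(\tau):=\int_{0}^{\tau}\psi$, the function $g-G$ is nonincreasing, so $g=(g-G)+G$ is a monotone function plus an absolutely continuous one; therefore $g$ is differentiable a.e.\ in $(0,T)$ (and $g'$ is measurable, as is the right‑hand side of the asserted inequality by \cref{prop:Phi}), and since a nonincreasing $\phi$ satisfies $\phi(t)-\phi(s)\le\int_{s}^{t}\phi'$ for $s\le t$, we conclude $g(t)-g(s)\le\int_{s}^{t}g'(\tau)\,\textup{d}\tau$.

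For the pointwise estimate on $g'$, I would fix $t$ in the full‑measure set where both $g$ and $u$ are differentiable and, for small $h>0$, regroup the forward increment as
\begin{equation*}
  g(t+h)-g(t)=\bigl[\varphi_{\lambda}^{t+h}(u(t+h))-\varphi_{\lambda}^{t}(u(t+h))\bigr]+\bigl[\varphi_{\lambda}^{t}(u(t+h))-\varphi_{\lambda}^{t}(u(t))\bigr],
\end{equation*}
the point being that here the point‑displacement is measured by the \emph{fixed} functional $\varphi_{\lambda}^{t}$, so no time‑continuity of resolvents is needed. The second bracket is $\le\bigl(\partial\varphi_{\lambda}^{t}(u(t+h)),u(t+h)-u(t)\bigr)_{H}$ by convexity; dividing by $h$ and letting $h\to0_{+}$, the Lipschitz continuity of $\partial\varphi_{\lambda}^{t}$ (\cref{prop:JY}\,(2)) and the differentiability of $u$ at $t$ give $\limsup_{h\to0_{+}}h^{-1}\bigl[\varphi_{\lambda}^{t}(u(t+h))-\varphi_{\lambda}^{t}(u(t))\bigr]\le\bigl(\partial\varphi_{\lambda}^{t}(u(t)),u'(t)\bigr)_{H}$. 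The first bracket is treated exactly like $(\mathrm{II})$ above, now for the pair $(t,t+h)$ with base point $J_{\lambda}^{t}(u(t+h))$: the piece carrying $h^{2}/\lambda$ vanishes after dividing by $h$, while the surviving pieces converge — using nonexpansiveness of $J_{\lambda}^{t}$ (\cref{prop:JY}\,(1)), the identity $\varphi^{t}(J_{\lambda}^{t}w)=\varphi_{\lambda}^{t}(w)-\frac{\lambda}{2}\|\partial\varphi_{\lambda}^{t}(w)\|_{H}^{2}$, and $u(t+h)\to u(t)$ — so that $\limsup_{h\to0_{+}}h^{-1}\bigl[\varphi_{\lambda}^{t+h}(u(t+h))-\varphi_{\lambda}^{t}(u(t+h))\bigr]$ is at most $c_{1}\|\partial\varphi_{\lambda}^{t}(u(t))\|_{H}(1+|\varphi^{t}(J_{\lambda}^{t}(u(t)))|)^{1/2}+c_{1}(1+|\varphi^{t}(J_{\lambda}^{t}(u(t)))|)$. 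Summing the two, applying Young's inequality to the term with the $1/2$‑power with a small $\varepsilon>0$ (which plays the role of $c_{0}$), and bounding $|\varphi^{t}(J_{\lambda}^{t}(u(t)))|$ via $\varphi^{t}(J_{\lambda}^{t}(u(t)))\le\varphi_{\lambda}^{t}(u(t))\le|\varphi_{\lambda}^{t}(u(t))|$ and its negative part via \cref{prop:time-subdiff} together with \eqref{eq:td-subdiff-varphi-bound}, I would reach
\begin{equation*}
  g'(t)\le\bigl(\partial\varphi_{\lambda}^{t}(u(t)),u'(t)\bigr)_{H}+c_{0}\|\partial\varphi_{\lambda}^{t}(u(t))\|_{H}^{2}+\eta_{1}(t)\,|\varphi_{\lambda}^{t}(u(t))|+\eta_{2}(t)
\end{equation*}
with $c_{0}:=\varepsilon\in[0,1)$, $\eta_{1}$ a constant depending only on $c_{1}$, and $\eta_{2}\in L^{1}(0,T)$ assembled from $c_{1}$ and the constants in \cref{prop:time-subdiff}. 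Finally, the ``in particular'' chain of inequalities follows by integrating this pointwise bound over $(0,t)$ and combining it with $g(t)-g(0)\le\int_{0}^{t}g'$ from the previous step.

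The step I expect to be the main obstacle is the limit passage for the first‑bracket (i.e.\ $(\mathrm{II})$‑type) term: the intermediate estimates degenerate as $\lambda\to0_{+}$ through the factor $1/\lambda$, so the crux is to verify that precisely the $\lambda$‑dependent contributions are quadratic in the increment — hence invisible to the derivative — while the surviving, $\lambda$‑uniform contributions assemble into the required form with a coefficient $c_{0}<1$ in front of $\|\partial\varphi_{\lambda}^{t}(u(t))\|_{H}^{2}$. This calibration (together with the measurability bookkeeping), rather than any individual inequality, is the heart of the proof; the full computation is carried out in \cite[Lemma~1.2.5, Theorem~1.5.1]{A-Kenmochi-1981}.
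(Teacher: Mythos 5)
First, a remark on the comparison itself: the paper does not prove this proposition at all --- it is quoted verbatim from Kenmochi \cite[Lemma~1.2.5, Theorem~1.5.1]{A-Kenmochi-1981} --- so your attempt can only be measured against the classical argument, and indeed your route (splitting increments into a point-displacement part controlled by convexity of $\varphi_{\lambda}^{t}$ and a time-displacement part controlled by \textup{(A1)} with base point $J_{\lambda}^{t}$, deducing a.e.\ differentiability from ``monotone plus absolutely continuous'', then passing to one-sided difference quotients) is exactly the classical one. The first half of your proof is sound: the estimate $g(t)-g(s)\le\int_{s}^{t}\psi$ with $\psi\in L^{1}(0,T)$, the a.e.\ differentiability, the inequality $g(t)-g(s)\le\int_{s}^{t}g'$, and the intermediate pointwise bound
\begin{equation*}
  g'(t)\le\bigl(\partial\varphi_{\lambda}^{t}(u(t)),u'(t)\bigr)_{H}
  +c_{1}\bigl\|\partial\varphi_{\lambda}^{t}(u(t))\bigr\|_{H}\bigl(1+|\varphi^{t}(J_{\lambda}^{t}u(t))|\bigr)^{1/2}
  +c_{1}\bigl(1+|\varphi^{t}(J_{\lambda}^{t}u(t))|\bigr)
\end{equation*}
are all correctly derived.

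The genuine gap is the final calibration, i.e.\ precisely the step you yourself flag as ``the heart of the proof'' and then defer to the citation. To convert $|\varphi^{t}(J_{\lambda}^{t}u(t))|$ into the claimed form you propose to bound its negative part via \cref{prop:time-subdiff} and \eqref{eq:td-subdiff-varphi-bound}; but that bound reads $-\varphi^{t}(J_{\lambda}^{t}u(t))\le C(\|J_{\lambda}^{t}u(t)\|_{H}+1)\le C'(1+\|u(t)\|_{H})$, so the $\eta_{2}$ you ``assemble from $c_{1}$ and the constants in \cref{prop:time-subdiff}'' in fact carries a factor $1+\|u(t)\|_{H}$ and is \emph{not} independent of $u$, contrary to what the proposition asserts (and contrary to how it is used later, where the independence matters for the uniform estimates). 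The natural alternative, using the exact identity $\varphi^{t}(J_{\lambda}^{t}w)=\varphi_{\lambda}^{t}(w)-\tfrac{\lambda}{2}\|\partial\varphi_{\lambda}^{t}(w)\|_{H}^{2}$ so that $|\varphi^{t}(J_{\lambda}^{t}u)|\le|\varphi_{\lambda}^{t}(u)|+\tfrac{\lambda}{2}\|\partial\varphi_{\lambda}^{t}(u)\|_{H}^{2}$, avoids the $u$-dependence but produces a coefficient of order $\varepsilon+c_{1}\lambda/2$ (plus the contribution from the square-root term) in front of $\|\partial\varphi_{\lambda}^{t}(u(t))\|_{H}^{2}$, which is not below $1$ uniformly in $\lambda\in(0,1)$ unless $c_{1}$ is small. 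So neither of the two obvious conversions yields simultaneously $c_{0}\in[0,1)$ and $\eta_{1},\eta_{2}$ independent of both $\lambda$ and $u$; obtaining the statement in exactly that form requires a more careful argument (in Kenmochi's original setting the corresponding functions are allowed to depend on $r\ge\sup_{t}\|u(t)\|_{H}$, which is precisely the dependence your assembly silently reintroduces). As it stands, your proposal proves a correct but weaker inequality (with $\eta_{2}(t)$ replaced by $\eta_{2}(t)(1+\|u(t)\|_{H})$, or with a $\lambda$-dependent $c_{0}$), and closes the remaining distance only by appealing to the very reference the proposition cites.
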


\subsection{Nonlocal and local time-differential operators} \label{subsec:time-nonlocal-ops}

In this subsection, we introduce  several nonlocal and local time-differential operators  
and recall some of their basic properties.  
For more general settings, see \cite{A-Gripenberg-1985,A-Clement-1981,A-Clement-1984,A-VergaraZacher-2008,A-Zacher-2008}.

 Let $(k,\ell) \in PC$.   
The \emph{nonlocal time-differential operator}  
$\mathcal{B} \colon D(\mathcal{B}) \subset L^{2}(0,T;H) \to L^{2}(0,T;H)$  
is defined by  
\begin{align*}
  D(\mathcal{B}) &:= \bigl\{ w \in L^{2}(0,T;H) \colon k*w \in W^{1,2}(0,T;H) \textup{ and } (k*w)(0) = 0 \bigr\}, \\ 
  \mathcal{B}(w) &:= \partial_{t}(k*w) \quad \textup{for } w \in D(\mathcal{B}).
\end{align*} 
Then $\mathcal{B}$ is a linear $m$-accretive operator (equivalently, a maximal monotone operator) in $L^{2}(0,T;H)$ (see \cite[Section~2]{A-VergaraZacher-2008}, also \cite{A-Clement-1984,A-Gripenberg-1985}), 
and for each $\lambda \in (0,\infty)$, the Yosida approximation $\mathcal{B}_{\lambda}$ is given by  
\begin{align*} 
  \mathcal{B}_{\lambda}(w) 
  = \partial_{t}(k_{\lambda}*w) 
  \quad \text{for } w \in L^{2}(0,T;H),
\end{align*}
where $k_{\lambda} \in W^{1,1}_{\textup{loc}}([0,\infty))$ solves the Volterra equation, 
\begin{align} \label{eq:Volterra-klambda}
  \lambda k_{\lambda}(t) + (\ell * k_{\lambda})(t) = 1
  \quad \textup{for } t \in (0,\infty), 
  \quad 
  k_{\lambda}(0) = 1/\lambda.
\end{align}
It has been shown that $k_{\lambda}$ is a nonnegative and nonincreasing function (see, e.g.,~\cite[Theorem~2.2]{A-Clement-1981}). 
Furthermore, it is known that $k_{\lambda} \to k$ strongly in $L^{1}(0,T)$ as $\lambda \to 0_{+}$ (see, e.g., \cite[Lemma~3.1]{A-Gripenberg-1985}, \cite[Section~2]{A-Zacher-2008}). 
Moreover, the following chain-rule formula holds. 

\begin{proposition}[\textup{see \cite[Proposition 3.4]{A-Akagi-2019}, \cite[Proposition 4.2]{P-AkagiNakajima-2025}}] \label{P:frac_chain}
Let $\varphi: H \to (-\infty,\infty]$ be a proper {\rm (}i.e., $\varphi \not\equiv \infty${\rm )} lower-semicontinuous convex functional.  
Let $u \in L^{2}(0,T;H)$ and $u_{0} \in D(\varphi)$ be such that $\varphi\bigl( u(\bullet) \bigr) \in L^{1}(0,T)$, $u - u_{0} \in D(\mathcal{B})$ and $u(t) \in D(\partial \varphi)$ for a.e.~$t \in (0,T)$.  
Assume that there exists $g \in L^{2}(0,T;H)$ satisfying $g(t) \in \partial \varphi(u(t))$ for a.e.~$t \in (0,T)$.  
Then it holds that
\begin{equation*} 
\int_{0}^{t} \bigl( \mathcal{B}(u-u_{0})(\tau), g(\tau) \bigr)_{H} \, \textup{d}\tau
\geq \bigl[k * \bigl( \varphi \bigl( u \bigr) - \varphi(u_{0}) \bigr)\bigr](t)
\end{equation*}
for a.e.~$t \in (0,T)$.  
In particular, if we choose $\varphi( \bullet ) = \tfrac{1}{2}\| \bullet  \|_{H}^{2}$, then for every $w \in L^{2}(0,T;H)$ and $w_{0} \in H$ with $w - w_{0} \in D(\mathcal{B})$, 
it holds that 
\begin{equation*}
\int_{0}^{t} \bigl( \mathcal{B}(w-w_{0})(\tau), w(\tau) \bigr)_{H} \, \textup{d}\tau 
\geq  \dfrac{1}{2} 
  \bigl[
    k * (\|w\|_{H}^{2} - \|w_{0}\|_{H}^{2})
  \bigr](t)
\end{equation*}
for a.e.~$t \in (0,T)$.
\end{proposition}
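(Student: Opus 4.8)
Following the scheme of \cite{A-Akagi-2019} (itself based on the regularization device of \cite{A-VergaraZacher-2008}), the plan is to first establish the inequality with $\mathcal{B}$ replaced by its Yosida approximation $\mathcal{B}_\lambda$ and then to pass to the limit $\lambda \to 0_+$. Write $v := u - u_0 \in D(\mathcal{B})$. The starting point is the observation that, since $k_\lambda \in W^{1,1}_{\textup{loc}}([0,\infty))$ with $k_\lambda(0) = 1/\lambda < \infty$, for every $h \in L^1(0,T;H)$ one has $k_\lambda * h \in W^{1,1}(0,T;H)$, $(k_\lambda * h)(0) = 0$, and, combining the differentiation rule $\partial_\tau(k_\lambda * h) = k_\lambda(0) h + k_\lambda' * h$ with $\int_0^\tau k_\lambda'(\tau-\sigma)\,\textup{d}\sigma = k_\lambda(\tau) - k_\lambda(0)$,
\begin{equation*}
\partial_\tau(k_\lambda * h)(\tau) = k_\lambda(\tau)\,h(\tau) + \int_0^\tau \bigl( -k_\lambda'(\tau-\sigma) \bigr)\bigl( h(\tau) - h(\sigma) \bigr)\,\textup{d}\sigma \qquad \textup{for a.e. }\tau \in (0,T).
\end{equation*}
By \cite[Theorem~2.2]{A-Clement-1981}, $k_\lambda$ is nonnegative and nonincreasing, so the weights $k_\lambda(\tau)$ and $-k_\lambda'(\tau-\sigma)$ appearing here are nonnegative; this sign structure is the crux.

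The core step is the regularized inequality. Taking $h = v$ in the above identity and pairing against $g(\tau)$ gives, for a.e.\ $\tau$,
\begin{equation*}
\bigl( \mathcal{B}_\lambda(v)(\tau), g(\tau) \bigr)_H = k_\lambda(\tau)\bigl( u(\tau) - u_0, g(\tau) \bigr)_H + \int_0^\tau \bigl( -k_\lambda'(\tau-\sigma) \bigr)\bigl( u(\tau) - u(\sigma), g(\tau) \bigr)_H\,\textup{d}\sigma.
\end{equation*}
Since $g(\tau) \in \partial\varphi(u(\tau))$ while $u_0 \in D(\varphi)$ and $u(\sigma) \in D(\varphi)$ for a.e.\ $\sigma$ (because $\varphi(u(\bullet)) \in L^1(0,T)$), the subgradient inequality yields $(u(\tau)-u_0, g(\tau))_H \geq \varphi(u(\tau)) - \varphi(u_0)$ and $(u(\tau)-u(\sigma), g(\tau))_H \geq \varphi(u(\tau)) - \varphi(u(\sigma))$ for a.e.\ $(\tau,\sigma)$. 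Multiplying by the nonnegative weights and then recognizing the right-hand side via the same identity applied to the scalar function $\varphi(u(\bullet)) - \varphi(u_0) \in L^1(0,T)$, one gets $(\mathcal{B}_\lambda(v)(\tau), g(\tau))_H \geq \partial_\tau[k_\lambda * (\varphi(u) - \varphi(u_0))](\tau)$ for a.e.\ $\tau$. Integrating over $\tau \in (0,t)$ and using $(k_\lambda * h)(0) = 0$, one obtains
\begin{equation*}
\int_0^t \bigl( \mathcal{B}_\lambda(v)(\tau), g(\tau) \bigr)_H\,\textup{d}\tau \geq \bigl[ k_\lambda * \bigl( \varphi(u) - \varphi(u_0) \bigr) \bigr](t) \qquad \textup{for all } t \in (0,T).
\end{equation*}
The measurability and finiteness of the double integrals here are routine consequences of Young's convolution inequality and Cauchy--Schwarz, using $k_\lambda' \in L^1_{\textup{loc}}([0,\infty))$, $v, g \in L^2(0,T;H)$, and $\varphi(u(\bullet)) \in L^1(0,T)$.

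It then remains to let $\lambda \to 0_+$. For the right-hand side, $k_\lambda \to k$ strongly in $L^1(0,T)$ (see, e.g., \cite[Lemma~3.1]{A-Gripenberg-1985}), whence $k_\lambda * (\varphi(u) - \varphi(u_0)) \to k * (\varphi(u) - \varphi(u_0))$ in $L^1(0,T)$ and, along a suitable subsequence, pointwise for a.e.\ $t \in (0,T)$. For the left-hand side, since $v \in D(\mathcal{B})$ and $\mathcal{B}$ is a linear (hence single-valued) maximal monotone operator on $L^2(0,T;H)$, \cref{prop:JY} gives $\mathcal{B}_\lambda(v) \to \mathcal{B}(v)$ strongly in $L^2(0,T;H)$; together with $g \in L^2(0,T;H)$ and Cauchy--Schwarz this yields $\int_0^t(\mathcal{B}_\lambda(v), g)_H\,\textup{d}\tau \to \int_0^t(\mathcal{B}(v), g)_H\,\textup{d}\tau$ uniformly in $t \in [0,T]$. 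Passing to the limit along the subsequence gives the asserted inequality for a.e.\ $t \in (0,T)$. The stated special case is obtained by taking $\varphi(\bullet) = \tfrac12\|\bullet\|_H^2$ and applying the general statement with the pair $(w, w_0)$ in place of $(u, u_0)$ and $g = w$, which is admissible since $D(\partial\varphi) = H$, $\partial\varphi(z) = z$, and $\varphi(w(\bullet)) = \tfrac12\|w(\bullet)\|_H^2 \in L^1(0,T)$.

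I expect the main obstacle to be the pointwise-in-time convexity step: identifying the integration-by-parts representation of $\partial_\tau(k_\lambda * v)$ (the Vergara--Zacher device) and applying the subgradient inequality term by term with the correct signs. The remaining ingredients --- the convolution differentiation rule for a $W^{1,1}$ kernel, the Fubini/Young bookkeeping, and the two limit passages --- are standard. A minor but necessary care point is that all the subgradient inequalities and convolution identities should be read at Lebesgue points, so that the exceptional null sets in $\tau$ and in $(\tau,\sigma)$ do not affect the integrated estimates.
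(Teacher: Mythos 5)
Your proposal is correct and follows essentially the same route as the proofs in the cited references \cite{A-Akagi-2019,A-VergaraZacher-2008}, which is all the paper itself relies on for this proposition (it is quoted, not reproved): regularize the kernel by $k_{\lambda}$, use the integration-by-parts representation of $\partial_{\tau}(k_{\lambda}*v)$ together with the subgradient inequality under the nonnegative weights $k_{\lambda}$ and $-k_{\lambda}'$, and then pass to the limit via $\mathcal{B}_{\lambda}(v)\to\mathcal{B}(v)$ in $L^{2}(0,T;H)$ and $k_{\lambda}\to k$ in $L^{1}(0,T)$. This is also precisely the scheme the paper employs for its time-dependent generalization in \cref{lem:chainrule1,lem:chainrule2}, so no gap is present.
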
 
Let $\mathcal{A} \colon D(\mathcal{A}) \subset L^{2}(0,T;H) \to L^{2}(0,T;H)$  be  
the classical  (local)  time-differential operator defined by  
\begin{align*}
  D(\mathcal{A}) := \{ w \in W^{1,2}(0,T;H) \colon w(0)=0 \}, 
  \quad  
  \mathcal{A}(w) := \partial_{t} w \quad \textup{for } w \in D(\mathcal{A}).
\end{align*} 
Then we observe that $D(\mathcal{A}) \subset D(\mathcal{B})$. 
We recall that for each  $u \in D(\mathcal{A})$, the following inequality holds:
\begin{equation}  \label{E:AB-energy-estimate}
  \int_{0}^{t} \bigl( \mathcal{B}(u)(\tau), \mathcal{A}(u)(\tau) \bigr)_{H} \, \textup{d}\tau
  \geq 
  \dfrac{1}{2} \bigl( \ell * \|\mathcal{B}(u)\|_{H}^{2} \bigr)(t)
\end{equation}
for~a.e.~$t \in (0,T)$ (see \cite[Corollary 3.6]{A-Akagi-2019}). 
Therefore, for every $u \in D(\mathcal{A})$,  
H\"older's inequality yields   
\begin{align} 
  \dfrac{1}{2} \bigl( \ell * \|\mathcal{B}(u)\|_{H}^{2} \bigr)(t) 
  &\leq 
  \bigl( \mathcal{B}(u),\mathcal{A}(u)\bigr)_{L^{2}(0,t;H)}
  \notag  \\  
  &\leq    
  \| \mathcal{B}(u)\|_{L^{2}(0,T;H)} \| \mathcal{A}(u)\|_{L^{2}(0,T;H)}   \label{eq:AB-Holder}
\end{align}
for~a.e.~$t \in (0,T)$.


\section{Some devices} \label{Sec: Some devices}

\subsection{Modified Gronwall-type lemma}

In this subsection, we prove a Gronwall-type lemma  (see \cref{prop:ineq} below), which will be used later in the proof of our main results. 
To this end, we first establish the following auxiliary inequality\textup{:}   


\begin{lemma}\label{lem:weighted}
Let $g_{1}, g_{2} \in L^{1}(0,T)$ be nonnegative functions, and let $f \in L^{\infty}(0,T)$.  
Then, for every $ \beta  \in (0,\infty)$, it holds that 
\begin{align*}
  &\int_{0}^{t} g_{1}(s) f(s)\,\textup{d}s +  (g_{2} * f)(t) \\
  &\quad \leq 
  \|f\|_{L^{\infty}_{ \beta }(0,T)}
  \exp\!\left(  \beta  \int_{0}^{t} g_{1}(s)\,\textup{d}s +  \beta  t \right)
  \left( \dfrac{1}{ \beta } + \int_{0}^{t} g_{2}(s) \,  \textup{e}^{-  \beta  s}  \,\textup{d}s \right)
\end{align*}
for~a.e.~ $t \in (0,T)$, 
where the weighted norm  $\| \bullet \|_{L^{\infty}_{ \beta }(0,T)} \colon L^{\infty}(0,T) \to [0,\infty)$ is defined by
\begin{equation*}
  \|f\|_{L^{\infty}_{ \beta  }(0,T)} 
  := 
  \left\| 
    f(\bullet)\,
    \exp\!\left(
      -  \beta  \int_{0}^{ \bullet } g_{1}(\tau)\,\textup{d}\tau -  \beta \bullet  
    \right) 
  \right\|_{L^{\infty}(0,T)}.
\end{equation*}
\end{lemma}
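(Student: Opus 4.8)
The plan is to reduce the inequality to a scalar Volterra/Gronwall argument via the weighted-norm substitution, which is exactly why $\|f\|_{L^\infty_\beta(0,T)}$ is introduced. Write $M := \|f\|_{L^\infty_\beta(0,T)}$, so that by definition, for a.e.\ $s\in(0,T)$,
\begin{equation*}
  f(s) \;\leq\; M\,\exp\!\left(\beta\int_0^s g_1(\tau)\,\textup{d}\tau + \beta s\right)
  \;=:\; M\,E(s),
\end{equation*}
where $E(s):=\exp\!\bigl(\beta\int_0^s g_1 + \beta s\bigr)$ is nonnegative, nondecreasing, and absolutely continuous with $E'(s) = E(s)\bigl(\beta g_1(s) + \beta\bigr)$ for a.e.\ $s$, and $E(0)=1$. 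Since $g_1, g_2$ are nonnegative, inserting this pointwise bound into the left-hand side gives
\begin{equation*}
  \int_0^t g_1(s) f(s)\,\textup{d}s + (g_2 * f)(t)
  \;\leq\; M\!\left(\int_0^t g_1(s) E(s)\,\textup{d}s + \int_0^t g_2(t-s) E(s)\,\textup{d}s\right).
\end{equation*}

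The first term is handled by the exact identity $\int_0^t g_1(s)E(s)\,\textup{d}s \leq \int_0^t \tfrac{1}{\beta}E'(s)\,\textup{d}s = \tfrac1\beta\bigl(E(t)-1\bigr) \leq \tfrac1\beta E(t)$, using $\beta g_1(s)E(s) \leq E'(s)$ and $E(0)=1$. For the convolution term, since $E$ is nondecreasing we bound $E(s)\leq E(t)$ inside the integral $\int_0^t g_2(t-s)E(s)\,\textup{d}s$; but to get the stated form $\int_0^t g_2(s)\,\textup{e}^{-\beta s}\,\textup{d}s$ multiplied by $E(t)$, I instead keep $E$ and use that $E(t) = E(s)\exp\!\bigl(\beta\int_s^t g_1 + \beta(t-s)\bigr) \geq E(s)\,\textup{e}^{\beta(t-s)}$ (again by nonnegativity of $g_1$), hence $E(s) \leq E(t)\,\textup{e}^{-\beta(t-s)}$. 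Substituting the variable $r=t-s$ in $\int_0^t g_2(t-s)E(s)\,\textup{d}s \leq E(t)\int_0^t g_2(t-s)\,\textup{e}^{-\beta(t-s)}\,\textup{d}s = E(t)\int_0^t g_2(r)\,\textup{e}^{-\beta r}\,\textup{d}r$ yields exactly the second bracketed term. Adding the two contributions and factoring out $M\,E(t)$ gives precisely the claimed inequality.

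The genuinely delicate point is the justification of the chain-rule / absolute-continuity computation $\int_0^t g_1 E = \tfrac1\beta\int_0^t E'$: since $g_1\in L^1(0,T)$ need not be continuous, $E$ is merely absolutely continuous (not $C^1$), so I should cite the fundamental theorem of calculus for absolutely continuous functions and note that $s\mapsto \int_0^s g_1$ is absolutely continuous, hence so is $E = \exp(\cdot)$ of it, with the stated derivative a.e. Everything else is elementary: monotonicity of $E$, nonnegativity of $g_1,g_2,f$ (so the pointwise bound $f\leq ME$ transfers under integration against nonnegative weights), and a change of variables in the convolution. There is no real obstacle, only bookkeeping; the one thing to state carefully is that the bound $f(s)\leq M E(s)$ holds for a.e.\ $s$ — which is the very definition of the essential supremum $M$ — and that this suffices since all integrals are over nonnegative integrands.
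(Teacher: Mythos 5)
Your proof is correct and follows essentially the same route as the paper: insert the pointwise bound $f(s)\le \|f\|_{L^{\infty}_{\beta}(0,T)}\exp\bigl(\beta\int_0^s g_1+\beta s\bigr)$, integrate the first term exactly as a derivative of the exponential weight, and bound the convolution term by shifting the exponential to $\textup{e}^{-\beta(t-s)}$ and changing variables. The only differences are cosmetic (you differentiate the full weight $E$ and use $\beta g_1 E\le E'$ where the paper factors out $\textup{e}^{\beta t}$ first, and nonnegativity of $f$ is not actually needed since the norm bounds $|f|$), so nothing further is required.
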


\begin{proof}
We estimate the two terms  of the left-hand side  separately.  
We first consider the integral term.  
From the definition of the weighted norm, we have
\begin{equation}  \label{eq:weighted-estimate}
  f(s) 
  \leq 
  \|f\|_{L^{\infty}_{ \beta }(0,T)} 
  \exp\!\left(  \beta  \int_{0}^{s} g_{1}(\tau)\,\textup{d}\tau +  \beta  s \right)
\end{equation}
for~a.e.~$s \in (0,T)$. 
Therefore we observe that  
\begin{align} 
  &\int_{0}^{t} g_{1}(s) f(s)\,\textup{d}s 
  \notag \\ 
  &\quad \leq 
  \|f\|_{L^{\infty}_{  \beta  }(0,T)}
  \int_{0}^{t} g_{1}(s) 
  \exp\!\left(  \beta  \int_{0}^{s} g_{1}(\tau)\,\textup{d}\tau +  \beta  s \right)\,\textup{d}s 
  \notag \\ 
  &\quad \leq 
  \|f\|_{L^{\infty}_{  \beta  }(0,T)}\,  \textup{e}^{  \beta  t} 
  \int_{0}^{t} g_{1}(s) 
  \exp\!\left(  \beta  \int_{0}^{s} g_{1}(\tau)\,\textup{d}\tau \right)\,\textup{d}s 
  \notag \\ 
  &\quad = 
  \|f\|_{L^{\infty}_{  \beta  }(0,T)}\,   \textup{e}^{  \beta  t} 
  \int_{0}^{t} 
  \dfrac{1}{  \beta  }\,\partial_{s}
  \left[ 
    \exp\!\left(  \beta  \int_{0}^{s} g_{1}(\tau)\,\textup{d}\tau \right) 
  \right] \,\textup{d}s 
  \notag \\ 
  &\quad = 
  \dfrac{1}{  \beta  }\, \|f\|_{L^{\infty}_{  \beta  }(0,T)}\,  \textup{e}^{  \beta  t} 
  \left[ 
    \exp\!\left(  \beta  \int_{0}^{t} g_{1}(\tau)\,\textup{d}\tau \right) - 1 
  \right] 
  \notag \\ 
  &\quad \leq 
  \dfrac{1}{  \beta  }\, \|f\|_{L^{\infty}_{  \beta  }(0,T)} 
  \exp\!\left(  \beta  \int_{0}^{t} g_{1}(\tau)\,\textup{d}\tau +  \beta  t \right)
  \label{eq:aux-ineq-1}
\end{align}
for~a.e.~$t \in (0,T)$. 
Next, we turn to the convolution term.  
 From \eqref{eq:weighted-estimate},  we deduce that  
\begin{align} 
  &  (g_{2} * f)(t)  
  = \int_{0}^{t} g_{2}(t-s) f(s)\,\textup{d}s 
  \notag \\ 
  &\quad \leq 
  \|f\|_{L^{\infty}_{  \beta  }(0,T)}
  \int_{0}^{t} g_{2}(t-s)
  \exp\!\left(  \beta  \int_{0}^{s} g_{1}(\tau)\,\textup{d}\tau +  \beta  s \right)\,\textup{d}s  
  \notag \\ 
  &\quad \leq 
  \|f\|_{L^{\infty}_{  \beta  }(0,T)} 
  \exp\!\left(  \beta  \int_{0}^{t} g_{1}(\tau)\,\textup{d}\tau \right)
  \int_{0}^{t} g_{2}(t-s) \,   \textup{e}^{  \beta  s}   \,\textup{d}s  
  \notag \\ 
  &\quad = 
  \|f\|_{L^{\infty}_{  \beta  }(0,T)} 
  \exp\!\left(  \beta  \int_{0}^{t} g_{1}(\tau)\,\textup{d}\tau \right)   \textup{e}^{  \beta  t} 
  \notag \\ 
  & \qquad \times  
  \int_{0}^{t} g_{2}(t-s)  \textup{e}^{- \beta(t-s)} \,\textup{d}s 
  \notag \\ 
  &\quad = 
  \|f\|_{L^{\infty}_{ \beta }(0,T)} 
  \exp\!\left( \beta \int_{0}^{t} g_{1}(\tau)\,\textup{d}\tau + \beta t \right)
  \int_{0}^{t} g_{2}(s) \,   \textup{e}^{-  \beta  s}   \,\textup{d}s
  \label{eq:aux-ineq-2}
\end{align}
for a.e. $t \in (0,T)$.  
Combining \eqref{eq:aux-ineq-1} and \eqref{eq:aux-ineq-2}, we obtain  
\begin{align*}
  &\int_{0}^{t} g_{1}(s) f(s)\,\textup{d}s +  ( g_{2} * f)(t)   
  \\ 
  &\quad \leq 
  \|f\|_{L^{\infty}_{  \beta  }(0,T)}
  \exp\!\left(  \beta  \int_{0}^{t} g_{1}(\tau)\,\textup{d}\tau +  \beta  t \right)
  \left( \dfrac{1}{  \beta  } + \int_{0}^{t} g_{2}(s) \,   \textup{e}^{-  \beta  s}  \,\textup{d}s \right)
\end{align*}
for a.e. $t \in (0,T)$, which is the desired inequality.  
\end{proof}

Applying \cref{lem:weighted},  we can prove that the following Volterra integral equation admits a unique solution\textup{:} 

\begin{proposition}\label{L:Volterra}
Let $X$ be a real Banach space. Let $g_{1} \in L^{\infty}(0,T)$ and let $g_{2}, g_{3} \in L^{1}(0,T)$.   
Then there exists a unique solution $w \in L^{\infty}(0,T;X)$ to the Volterra integral equation,
\begin{equation*}
  w(t) = g_{1}(t) + \int_{0}^{t} g_{2}(s) w(s) \,\textup{d}s + (g_{3} * w)(t)  
  \quad \textup{for $t \in (0,T)$.}
\end{equation*}
\end{proposition}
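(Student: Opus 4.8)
The plan is to prove existence and uniqueness by a standard fixed-point/Gronwall argument. The key observation is that \cref{lem:weighted} furnishes, for each $\beta \in (0,\infty)$, a weighted $L^{\infty}$-norm on $L^{\infty}(0,T;X)$ under which the Volterra operator
\begin{equation*}
  (\mathcal{T}w)(t) := g_{1}(t) + \int_{0}^{t} g_{2}(s) w(s)\,\textup{d}s + (g_{3}*w)(t)
\end{equation*}
becomes a contraction when $\beta$ is chosen large. First I would record that $\mathcal{T}$ maps $L^{\infty}(0,T;X)$ into itself: for $w \in L^{\infty}(0,T;X)$, the map $s \mapsto g_{2}(s)w(s)$ lies in $L^{1}(0,T;X)$, so its primitive is (absolutely) continuous and bounded; similarly $g_{3}*w \in L^{\infty}(0,T;X)$ since $g_{3} \in L^{1}(0,T)$ and $w \in L^{\infty}$; and $g_{1} \in L^{\infty}(0,T;X)$ by hypothesis. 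Hence $\mathcal{T}w \in L^{\infty}(0,T;X)$.

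Next I would set up the contraction estimate. Fix $w_{1}, w_{2} \in L^{\infty}(0,T;X)$ and put $v := w_{1} - w_{2}$, so that
\begin{equation*}
  (\mathcal{T}w_{1} - \mathcal{T}w_{2})(t) = \int_{0}^{t} g_{2}(s) v(s)\,\textup{d}s + (g_{3}*v)(t).
\end{equation*}
Applying \cref{lem:weighted} with $g_{1} \rightsquigarrow |g_{2}|$, $g_{2} \rightsquigarrow |g_{3}|$, and $f(s) \rightsquigarrow \|v(s)\|_{X}$ (all nonnegative, $\|v(\bullet)\|_{X} \in L^{\infty}(0,T)$), and taking $X$-norms inside the integrals first, gives
\begin{equation*}
  \|(\mathcal{T}w_{1}-\mathcal{T}w_{2})(t)\|_{X}
  \leq \|v\|_{L^{\infty}_{\beta}(0,T)}\,
  \exp\!\left(\beta\!\int_{0}^{t}\!|g_{2}(\tau)|\,\textup{d}\tau + \beta t\right)
  \left(\frac{1}{\beta} + \int_{0}^{t}\!|g_{3}(s)|\,\textup{e}^{-\beta s}\,\textup{d}s\right),
\end{equation*}
where $\|v\|_{L^{\infty}_{\beta}(0,T)}$ is the weighted norm associated with $g_{1} \rightsquigarrow |g_{2}|$. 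Multiplying both sides by $\exp(-\beta\int_{0}^{t}|g_{2}(\tau)|\,\textup{d}\tau - \beta t)$, taking the essential supremum over $t \in (0,T)$, and using $\int_{0}^{t}|g_{3}(s)|\textup{e}^{-\beta s}\,\textup{d}s \leq \int_{0}^{T}|g_{3}(s)|\,\textup{d}s = \|g_{3}\|_{L^{1}(0,T)}$, we obtain
\begin{equation*}
  \|\mathcal{T}w_{1}-\mathcal{T}w_{2}\|_{L^{\infty}_{\beta}(0,T)}
  \leq \left(\frac{1}{\beta} + \|g_{3}\|_{L^{1}(0,T)}\right)\|w_{1}-w_{2}\|_{L^{\infty}_{\beta}(0,T)}.
\end{equation*}
The main obstacle here is that the factor $1/\beta$ alone can be made small by taking $\beta$ large, but the term $\|g_{3}\|_{L^{1}(0,T)}$ does not shrink with $\beta$; to kill it one must instead bound $\int_{0}^{t}|g_{3}(s)|\textup{e}^{-\beta s}\,\textup{d}s$ more carefully—by the dominated convergence theorem this integral tends to $0$ as $\beta \to \infty$ (the integrand is dominated by $|g_{3}| \in L^{1}$ and converges pointwise a.e.\ to $0$). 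Hence for $\beta$ sufficiently large the constant $\tfrac1\beta + \sup_{t}\int_0^t |g_3(s)|\textup{e}^{-\beta s}\textup{d}s$ is strictly less than $1$, so $\mathcal{T}$ is a contraction on the Banach space $\bigl(L^{\infty}(0,T;X), \|\bullet\|_{L^{\infty}_{\beta}(0,T)}\bigr)$—this norm being equivalent to the usual $L^{\infty}$-norm since the weight $\exp(-\beta\int_0^\bullet |g_2| - \beta\bullet)$ is bounded above and below by positive constants on $[0,T]$.

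Finally I would invoke the Banach fixed-point theorem: $\mathcal{T}$ has a unique fixed point $w \in L^{\infty}(0,T;X)$, which is precisely the unique solution of the Volterra integral equation. Uniqueness in $L^{\infty}(0,T;X)$ is immediate from uniqueness of the fixed point. I would remark that one should verify the asserted regularity of the right-hand side of the fixed-point identity is consistent (the fixed point automatically satisfies $w = g_1 + (\text{continuous part}) + (g_3*w)$, so $w$ inherits no extra regularity beyond $L^\infty$, which matches the statement). This completes the proof.
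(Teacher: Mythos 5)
Your proposal is correct and follows essentially the same route as the paper: a Banach fixed-point argument for the map $\Lambda(u) = g_{1} + \int_{0}^{\bullet} g_{2}u + g_{3}*u$ on $L^{\infty}(0,T;X)$ equipped with the $\beta$-weighted norm, with the contraction constant controlled via \cref{lem:weighted} and the choice of $\beta$ large so that $\tfrac{1}{\beta} + \int_{0}^{T}|g_{3}(s)|\,\mathrm{e}^{-\beta s}\,\mathrm{d}s < 1$. Your extra remark that this smallness follows from dominated convergence is a welcome (implicit in the paper) justification, but otherwise the two proofs coincide.
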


\begin{proof}
For each $\beta \in (0,\infty)$, set $\mathfrak{X}_{\beta} := L^{\infty}(0,T;X)$ equipped with a norm given by
\begin{equation*}
  \|u\|_{\mathfrak{X}_{\beta}} := \left\| u(\bullet)\,
  \exp \!\left(-\beta \int_{0}^{\bullet} |g_{2}(\tau)|  \,\textup{d}\tau - \beta \bullet \right)
  \right\|_{L^{\infty}(0,T;X)} 
\end{equation*}
for $u \in \mathfrak{X}_{\beta}$. 
Then $( \mathfrak{X}_{\beta} , \|\bullet\|_{ \mathfrak{X}_{\beta} })$ is a Banach space for all $\beta \in (0,\infty)$, and $\|\bullet\|_{ \mathfrak{X}_{\beta} }$ is equivalent to the standard norm (without weight) of $L^{\infty}(0,T;X)$.
We define a map $\Lambda \colon L^{\infty}(0,T;X) \to L^{\infty}(0,T;X)$ by
\begin{equation*}
  [\Lambda(u)](t) := g_{1}(t) + \int_{0}^{t} g_{2}(s) u(s) \,\textup{d}s + (g_{3} * u)(t)
  \quad \textup{for~a.e.~$t \in (0,T)$.}
\end{equation*}

We claim that there exist $\beta_{0} \in (0,\infty)$ and $\kappa_{0} \in (0,1)$ such that
\begin{equation*}
  \|\Lambda(u_{1}) - \Lambda(u_{2})\|_{ \mathfrak{X}_{\beta_{0}} } 
  \leq \kappa_{0}\, \|u_{1} - u_{2}\|_{ \mathfrak{X}_{\beta_{0}} }
\end{equation*}
for all $u_{1}, u_{2} \in \mathfrak{X}_{\beta_{0}}$, i.e., $\Lambda$ is a contraction on $\mathfrak{X}_{\beta_{0}}$. 
Indeed, let $\beta \in (0,\infty)$ and take arbitrary $u_{1}, u_{2} \in \mathfrak{X}_{\beta}$. 
Using \cref{lem:weighted}, we have
\begin{align*}
  &\|\Lambda(u_{1})(t) - \Lambda(u_{2})(t)\|_{X} \\
  &\quad = 
  \left\|\int_{0}^{t} g_{2}(s)\,(u_{1}-u_{2})(s) \,\textup{d}s 
   + [g_{3} * (u_{1}-u_{2})](t)\right\|_{X} \\
  &\quad \leq 
  \int_{0}^{t} |g_{2}(s)| \, \bigl\|(u_{1}-u_{2})(s)\bigr\|_{X} \,\textup{d}s 
  + [\,|g_{3}| * \|u_{1}-u_{2}\|_{X}\,](t)   \\
  &\quad \leq
  \|u_{1}-u_{2}\|_{ \mathfrak{X}_{\beta} } \exp \!\left( \beta \int_{0}^{t} |g_{2}(\tau)|\,\textup{d}\tau + \beta t\right)
  \left( \dfrac{1}{\beta} + \int_{0}^{t} |g_{3}(s)| \, \textup{e}^{- \beta s} \,\textup{d}s \right)
\end{align*}
for a.e.\ $t \in (0,T)$. 
Let $\kappa_{0} \in (0,1)$ be fixed. Then we can take a constant $\beta_{0} \in (0,\infty)$ sufficiently large such that
\begin{equation*}
  \dfrac{1}{\beta_{0}} + \int_{0}^{T} |g_{3}(s)| \, \textup{e}^{-\beta_{0} s} \,\textup{d}s \leq \kappa_{0}.
\end{equation*}
Then it follows that  
\begin{align*}
 &\|\Lambda(u_{1})(t) - \Lambda(u_{2})(t)\|_{X} \, 
   \exp \!\left(- \beta_{0} \int_{0}^{t} |g_{2}(\tau)|\,\textup{d}\tau - \beta_{0} t\right)\\ 
 &\quad \leq
  \|u_{1}-u_{2}\|_{ \mathfrak{X}_{\beta_{0}} }
  \left( \dfrac{1}{\beta_{0}} + \int_{0}^{t} |g_{3}(s)| \textup{e}^{ - \beta_{0} s} \,\textup{d}s \right) 
  \leq 
  \kappa_{0}\, \|u_{1}-u_{2}\|_{ \mathfrak{X}_{\beta_{0}} } 
\end{align*}
for a.e.\ $t \in (0,T)$. 
Taking the supremum over $t \in (0,T)$, we obtain    
\begin{equation*}
\|\Lambda(u_{1}) - \Lambda(u_{2})\|_{ \mathfrak{X}_{\beta_{0}} } \leq \kappa_{0}\, \|u_{1}-u_{2}\|_{ \mathfrak{X}_{\beta_{0}} },
\end{equation*}
which shows that $\Lambda$ is a contraction on $\mathfrak{X}_{\beta_{0}}$.  
Due to Banach's fixed point theorem, there exists a unique fixed point $w \in \mathfrak{X}_{\beta_{0}} = L^{\infty}(0,T;X)$ such that
$\Lambda(w) = w$.  
This completes the proof.  
\end{proof}


We are now in a position to derive a Gronwall-type lemma. 

\begin{proposition}\label{prop:ineq}
Let $g_{1} \in L^{\infty}(0,T)$, and let $g_{2}, g_{3} \in L^{1}(0,T)$ be nonnegative functions. 
Suppose that $f \in L^{\infty}(0,T)$ satisfies
\begin{equation}\label{eq:ineq1}
 f(t) \leq g_{1}(t) + \int_{0}^{t} g_{2}(s) f(s) \,\textup{d}s + (g_{3} * f)(t) 
 \quad \textup{for a.e.~} t \in (0,T).
\end{equation}
Then $f(t) \leq G(t)$ for~a.e.~$t \in (0,T)$, where $G \in L^{\infty}(0,T)$ denotes the unique solution of the Volterra integral equation, 
\begin{equation}\label{eq:ineq2}
 G(t) = g_{1}(t) + \int_{0}^{t} g_{2}(s) G(s) \,\textup{d}s + (g_{3} * G)(t)  
 \quad \textup{for $t \in (0,T)$}.
\end{equation}
\end{proposition}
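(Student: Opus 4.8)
The plan is to compare $f$ with $G$ via their difference and to exploit that the Volterra operator appearing here is positivity‑preserving, using the contraction estimate already obtained in the proof of \cref{L:Volterra}. Let $G \in L^{\infty}(0,T)$ be the unique solution of \eqref{eq:ineq2}, which exists by \cref{L:Volterra} applied with $X=\mathbb{R}$ (note that $g_{1} \in L^{\infty}(0,T)$ and $g_{2},g_{3} \in L^{1}(0,T)$), and set $h := G - f \in L^{\infty}(0,T)$. Since the same data $g_{1},g_{2},g_{3}$ occur in both \eqref{eq:ineq1} and \eqref{eq:ineq2}, subtracting \eqref{eq:ineq1} from \eqref{eq:ineq2} yields
\[
  h(t) \geq \int_{0}^{t} g_{2}(s)\, h(s)\,\textup{d}s + (g_{3}*h)(t)
  \quad \textup{for a.e.\ } t \in (0,T).
\]
Introduce the linear operator $K \colon L^{\infty}(0,T) \to L^{\infty}(0,T)$, $(Kv)(t) := \int_{0}^{t} g_{2}(s)\,v(s)\,\textup{d}s + (g_{3}*v)(t)$, which is well defined because $g_{2},g_{3} \in L^{1}(0,T)$; the above reads $h \geq Kh$ a.e., so it suffices to prove $h \geq 0$ a.e.

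Because $g_{2}$ and $g_{3}$ are nonnegative, $K$ is \emph{positivity-preserving}: if $v \geq 0$ a.e.\ then $Kv \geq 0$ a.e. By linearity $K$ is therefore order-preserving, i.e.\ $v_{1}\geq v_{2}$ a.e.\ implies $Kv_{1}\geq Kv_{2}$ a.e. Applying $K$ iteratively to $h \geq Kh$ gives, by induction, $h \geq Kh \geq K^{2}h \geq \cdots \geq K^{n}h$ a.e.\ for every $n \in \mathbb{N}$, where the countable union of the exceptional null sets is again null. On the other hand, from the proof of \cref{L:Volterra} there exist $\beta_{0} \in (0,\infty)$ and $\kappa_{0} \in (0,1)$ such that, since there $\Lambda(u_{1})-\Lambda(u_{2}) = K(u_{1}-u_{2})$, the operator $K$ is a $\kappa_{0}$-contraction on $(\mathfrak{X}_{\beta_{0}},\|\bullet\|_{\mathfrak{X}_{\beta_{0}}})$, and $\|\bullet\|_{\mathfrak{X}_{\beta_{0}}}$ is equivalent to the usual norm of $L^{\infty}(0,T)$. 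Hence $\|K^{n}h\|_{\mathfrak{X}_{\beta_{0}}} \leq \kappa_{0}^{\,n}\|h\|_{\mathfrak{X}_{\beta_{0}}} \to 0$, so $\|K^{n}h\|_{L^{\infty}(0,T)} \to 0$ as $n\to\infty$. Consequently $K^{n}h(t) \to 0$ for a.e.\ $t \in (0,T)$, and letting $n\to\infty$ in $h(t) \geq K^{n}h(t)$ gives $h(t)\geq 0$, that is, $f(t) \leq G(t)$, for a.e.\ $t\in(0,T)$.

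I do not expect a genuine obstacle here: the argument is routine once \cref{L:Volterra} (and in particular its weighted-norm contraction estimate) is available, essentially amounting to the nonnegativity of the resolvent kernel of a Volterra equation with nonnegative coefficients. The only points requiring care are the bookkeeping of the almost-everywhere statements through the induction and the passage to the limit, and the verification that $K$ maps nonnegative functions to nonnegative functions and hence is order-preserving. (Alternatively, one could set $\psi := h - Kh \geq 0$, observe that $h$ is then the unique solution of $h = \psi + Kh$ provided by \cref{L:Volterra}, and identify it as the limit of the nonnegative iterates $\Lambda^{n}(0) = \sum_{j=0}^{n-1} K^{j}\psi$; this gives the same conclusion.)
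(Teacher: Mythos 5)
Your argument is correct, but it exploits the nonnegativity of $g_{2},g_{3}$ through a different device than the paper does. The paper subtracts \eqref{eq:ineq2} from \eqref{eq:ineq1}, passes to the positive part $(f-G)_{+}$ (this is where the sign conditions on $g_{2},g_{3}$ enter), and then closes the estimate in one step: applying \cref{lem:weighted} with $\beta_{0}$ large gives $\|(f-G)_{+}\|_{L^{\infty}_{\beta_{0}}(0,T)} \leq \kappa_{0}\,\|(f-G)_{+}\|_{L^{\infty}_{\beta_{0}}(0,T)}$ with $\kappa_{0}<1$, hence $(f-G)_{+}=0$ a.e. You instead keep $h=G-f$ without truncation, observe that the linear Volterra operator $K$ is positivity-preserving (hence order-preserving), iterate $h \geq Kh \geq \cdots \geq K^{n}h$ a.e., and kill the iterates using the weighted-norm contraction from the proof of \cref{L:Volterra} together with the equivalence of $\|\bullet\|_{\mathfrak{X}_{\beta_{0}}}$ with the usual $L^{\infty}$-norm; the a.e.\ bookkeeping over countably many null sets is handled correctly. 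Both proofs ultimately rest on the same weighted-norm smallness (large $\beta_{0}$, $\kappa_{0}<1$), so neither is stronger; the paper's route is shorter and needs only one application of \cref{lem:weighted}, while yours is slightly more conceptual — it amounts to the nonnegativity of the resolvent kernel (your parenthetical Picard-iterate remark makes this explicit, and is also valid since $\psi:=h-Kh\in L^{\infty}(0,T)$ is nonnegative) and would apply verbatim to any order-preserving linear Volterra operator that is contractive in an equivalent norm.
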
 

\begin{remark} 
By \cref{L:Volterra}, the Volterra integral equation \eqref{eq:ineq2} admits a unique solution $G \in L^{\infty}(0,T)$. 
In particular, we set a constant $D_{T} := \|G\|_{L^{\infty}(0,T)} < \infty$, which depends on $g_{1}$, $g_{2}$, $g_{3}$ and $T$. Then it holds that $f(t) \leq D_{T}$ for~a.e.~$t \in (0,T)$.
\end{remark}

\begin{proof}
Subtracting \eqref{eq:ineq2} from \eqref{eq:ineq1}, we have
\begin{align*}
 (f-G)(t) 
 &\leq 
 \int_{0}^{t} g_{2}(s) (f-G)(s)\,\textup{d}s + [g_{3} * (f-G)](t)\\
 &\leq 
 \int_{0}^{t} g_{2}(s) (f-G)_{+}(s)\,\textup{d}s + [g_{3} * (f-G)_{+}](t) 
\end{align*}
for a.e.~$t \in (0,T)$, where $(f-G)_{+} := \max \{0, f-G\} \in L^{\infty}(0,T)$. 
Here we used the nonnegativity of $g_{2}$ and $g_{3}$. 
Thus it holds that 
\begin{align} 
  (f-G)_{+}(t) 
  &= \max \left\{ 0, (f-G)(t) \right\}  \notag
  \\ &\leq 
  \int_{0}^{t} g_{2}(s) (f-G)_{+}(s)\,\textup{d}s + [g_{3} * (f-G)_{+}](t) \label{eq:ineq3} 
\end{align}
for~a.e.~$t \in (0,T)$. 
We now claim that $(f-G)_{+}(t) = 0$ for~a.e.~$t \in (0,T)$. 
It suffices to show that there exists a constant $\beta_{0} \in (0,\infty)$ such that $\|(f-G)_{+} \|_{L^{\infty}_{\beta_{0}}(0,T)} = 0$, where we see  
\begin{align*}
\|(f-G)_{+} \|_{L^{\infty}_{\beta}(0,T)} 
:=
\left\| (f-G)_{+}(\bullet)\,
\exp \!\left( - \beta \int_{0}^{\bullet} g_{2}(\tau)\,\textup{d}\tau - \beta \bullet \right)
\right\|_{L^{\infty}(0,T)}  
\end{align*}
for each $\beta \in (0,\infty)$.  
To see this, let $\beta \in (0,\infty)$ be a constant which will be determined later. 
Then, we derive from \eqref{eq:ineq3} and \cref{lem:weighted} that
\begin{align}
 &(f-G)_{+}(t) 
 \leq 
 \int_{0}^{t} g_{2}(s) (f-G)_{+}(s)\,\textup{d}s + [g_{3} * (f-G)_{+}](t) \notag \\
 &\quad \leq 
 \|(f-G)_{+} \|_{L^{\infty}_{\beta}(0,T)} 
 \exp \!\left( \beta \int_{0}^{t} g_{2}(s)\,\textup{d}s + \beta t \right) \notag \\ 
 &\quad\quad \times 
 \left( \dfrac{1}{\beta} + \int_{0}^{t} g_{3}(s) \, \textup{e}^{- \beta s} \,\textup{d}s \right)  \label{eq:ineq-weighted}
\end{align}
for~a.e.~$t \in (0,T)$. 
Let $\kappa_{0} \in (0,1)$ be fixed. 
Then we can take a constant $\beta_{0} \in (0,\infty)$ large enough such that 
\begin{equation*}
  \dfrac{1}{\beta_{0}} + \int_{0}^{T} g_{3}(s) \, \textup{e}^{- \beta_{0} s}  \,\textup{d}s \leq \kappa_{0}. 
\end{equation*}
Hence it follows from \eqref{eq:ineq-weighted} that 
\begin{align*}
  &(f-G)_{+}(t)\,
  \exp \! \left( -\beta_{0} \int_{0}^{t} g_{2}(\tau)\,\textup{d}\tau - \beta_{0} t \right) \\
  &\quad \leq 
  \|(f-G)_{+}\|_{L^{\infty}_{\beta_{0}}(0,T)} 
  \left( \dfrac{1}{\beta_{0}} + \int_{0}^{t} g_{3}(s) \, \textup{e}^{- \beta_{0} s} \,\textup{d}s  \right) \\
  &\quad \leq 
  \kappa_{0} \|(f-G)_{+}\|_{L^{\infty}_{\beta_{0}}(0,T)} 
\end{align*}
for~a.e.~$t \in (0,T)$.  
Taking the supremum over $t \in (0,T)$ yields 
\begin{align*}
\|(f-G)_{+}\|_{L^{\infty}_{\beta_{0}}(0,T)} 
  \leq 
  \kappa_{0} \|(f-G)_{+}\|_{L^{\infty}_{\beta_{0}}(0,T)}. 
\end{align*} 
Since $\kappa_{0} \in (0,1)$, we conclude that
$\|(f-G)_{+}\|_{L^{\infty}_{\beta_{0}}(0,T)} = 0$.
Thus $(f-G)_{+}(t)=0$ for~a.e.~$t \in (0,T)$, and hence
$f(t) \leq G(t)$ for~a.e.~$t \in (0,T)$.
This completes the proof.
\end{proof}


\subsection{Fractional chain-rule formulae for time-dependent subdifferential operators}
\label{subsec:nonlocal-chainrule-time-dependent}

In this subsection, under assumptions \textup{(A1)} and \textup{(A2)}, we establish two fractional chain-rule formulae for time-dependent subdifferential operators (see \cref{lem:chainrule1,lem:chainrule2} below).      
We begin with the nonlocal chain-rule formula for Sobolev-regular kernels.

\begin{lemma}[Nonlocal chain-rule formula  for regular kernels] \label{lem:chainrule1} 
Let $k \in W^{1,1}(0,T)$ be a nonnegative and nonincreasing function.  
For each $t \in [0,T]$, let $\varphi^{t} \colon H \to (-\infty,\infty]$ be a proper lower-semicontinuous convex functional, and suppose that \textup{(A1)} and  \textup{(A2)} hold. 
Let $u_{0} \in D(\varphi^{0})$ and let $u,g \in L^{2}(0,T;H)$ be such that 
$\varphi^{ \bullet }(u(  \bullet )) \in L^{1}(0,T)$,  $u(t) \in D(\partial \varphi^{t})$ and 
$g(t) \in \partial \varphi^{t} \bigl( u(t) \bigr)$ for~a.e.~$t \in (0,T)$.   
Then there exists a constant $c \in [0,\infty)$ independent of $u$, $g$, and $k$ such that, for all $\varepsilon \in (0,1)$, the following inequality holds\/\textup{:}  
\begin{align} 
  &\bigl(\partial_{t}[k*(u-u_{0})](t),\, g(t)\bigr)_{H}   \notag \\
  &\quad \geq 
   \partial_{t}\bigl[k*\varphi^{\bullet}(u(\bullet))\bigr](t)  
  - k(t) \varphi^{0}(u_{0}) 
  - \varepsilon\,  c  \,\|k\|_{L^{1}(0,T)}\,\|g(t)\|_{H}^{2}  \notag \\
  &\qquad - 
   \dfrac{c}{\varepsilon} \|k\|_{L^{1}(0,T)}   \bigl(1+ |\varphi^{0}(u_{0})| \bigr)  \notag \\
  &\qquad - 
  \dfrac{c}{\varepsilon} \int_{0}^{t}\!  \bigl(-s k'(s)\bigr)  \, |\varphi^{\,t-s}(u(t-s))| \,\mathrm{d}s \label{eq:device-1}
\end{align} 
for~a.e.~$t \in (0,T)$, and moreover, the following inequality holds\/\textup{:}
\begin{align} 
  & 
  \int_{0}^{t}
    \bigl(\partial_{t}[k*(u-u_{0})](s),\, g(s)\bigr)_{H}
    \,\textup{d}s  \notag \\
  &\quad \geq  
   \bigl[k*\varphi^{\bullet}(u(\bullet)) \bigr](t)  - \varphi^{0}(u_{0}) \int_{0}^{t}k(s) \, \textup{d}s  
  - \varepsilon\,  c  \,\|k\|_{L^{1}(0,T)}
    \int_{0}^{t}\!\|g(s)\|_{H}^{2}\,\textup{d}s  \notag \\
  &\qquad
  - \dfrac{c}{\varepsilon}  t  \,\|k\|_{L^{1}(0,T)}\bigl(1+|\varphi^{0}(u_{0})|\bigr)  \notag \\
  &\qquad
  - \dfrac{c}{\varepsilon} \|k\|_{L^{1}(0,T)} 
    \int_{0}^{t} |\varphi^{s}(u(s))| \,\textup{d}s \label{eq:device-2}
\end{align}
for  all~$t \in (0,T]$.  
\end{lemma}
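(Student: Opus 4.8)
The plan is to start from the explicit representation of the regular-kernel derivative. Since $k \in W^{1,1}(0,T) \hookrightarrow C([0,T])$ is absolutely continuous and $[k*(u-u_{0})](0)=0$, one has, for a.e.\ $t \in (0,T)$,
\[
  \partial_{t}[k*(u-u_{0})](t)
  = k(t)\bigl(u(t)-u_{0}\bigr)
  + \int_{0}^{t}\bigl(-k'(t-s)\bigr)\bigl(u(t)-u(s)\bigr)\,\textup{d}s,
\]
which follows from $(k*(u-u_{0}))'=k(0)(u-u_{0})+k'*(u-u_{0})$ by writing $u(s)-u_{0}=(u(t)-u_{0})-(u(t)-u(s))$ inside the convolution and using $\int_{0}^{t}k'(\sigma)\,\textup{d}\sigma=k(t)-k(0)$. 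Pairing with $g(t)\in\partial\varphi^{t}(u(t))$ and recalling $-k'\geq0$ a.e., this splits the left-hand side of \eqref{eq:device-1} into a boundary term $k(t)(u(t)-u_{0},g(t))_{H}$ and a convolution term $\int_{0}^{t}(-k'(t-s))(u(t)-u(s),g(t))_{H}\,\textup{d}s$.

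For the boundary term, since only $u_{0}\in D(\varphi^{0})$ is known, I would invoke \textup{(A1)} with $s=0$ to produce $z_{0,t}\in D(\varphi^{t})$ with $\|z_{0,t}-u_{0}\|_{H}\leq c_{1}t(1+|\varphi^{0}(u_{0})|)^{1/2}$ and $\varphi^{t}(z_{0,t})\leq\varphi^{0}(u_{0})+c_{1}t(1+|\varphi^{0}(u_{0})|)$, then use the subdifferential inequality $(g(t),u(t)-z_{0,t})_{H}\geq\varphi^{t}(u(t))-\varphi^{t}(z_{0,t})$ and estimate $(g(t),z_{0,t}-u_{0})_{H}$ by Cauchy--Schwarz and Young. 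The decisive quantitative point is that every stray factor $tk(t)$ produced this way is controlled by $\|k\|_{L^{1}(0,T)}$: since $k$ is nonincreasing, $tk(t)\leq\int_{0}^{t}k(\sigma)\,\textup{d}\sigma\leq\|k\|_{L^{1}(0,T)}$, which is why the $\|g(t)\|_{H}^{2}$ and $(1+|\varphi^{0}(u_{0})|)$ contributions carry the coefficient $\|k\|_{L^{1}(0,T)}$. Analogously, for the convolution term I would apply \textup{(A2)} to the function $u$ and the fixed $t$ to obtain $w_{t}\in L^{2}(0,t;H)$ satisfying the corresponding estimates, and deduce, for a.e.\ $s\in(0,t)$,
\[
  (g(t),u(t)-u(s))_{H}\geq\varphi^{t}(u(t))-\varphi^{s}(u(s))-c_{2}(t-s)\bigl(1+|\varphi^{s}(u(s))|\bigr)-c_{2}(t-s)\bigl(1+|\varphi^{s}(u(s))|\bigr)^{1/2}\|g(t)\|_{H}.
\]

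The key algebraic step is the scalar reorganization identity: setting $h:=\varphi^{\bullet}(u(\bullet))\in L^{1}(0,T)$, which is finite a.e.\ by hypothesis, the same reorganization of $(k*h)'$ as above gives
\[
  \int_{0}^{t}\bigl(-k'(t-s)\bigr)\bigl(\varphi^{t}(u(t))-\varphi^{s}(u(s))\bigr)\,\textup{d}s
  = \partial_{t}\bigl[k*\varphi^{\bullet}(u(\bullet))\bigr](t)-k(t)\varphi^{t}(u(t)).
\]
Consequently the term $k(t)\varphi^{t}(u(t))$ coming from the boundary estimate cancels exactly against $-k(t)\varphi^{t}(u(t))$ here, and what survives is $\partial_{t}[k*\varphi^{\bullet}(u(\bullet))](t)-k(t)\varphi^{0}(u_{0})$ together with error terms. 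Among the errors, those of the form $(1+|\varphi^{0}(u_{0})|)^{1/2}\|g(t)\|_{H}$ and (after integration against $(-k'(t-s))\,\textup{d}s$) those involving $(1+|\varphi^{s}(u(s))|)^{1/2}\|g(t)\|_{H}$ are absorbed by Young's inequality and produce the $\varepsilon c\|k\|_{L^{1}(0,T)}\|g(t)\|_{H}^{2}$ term; for the square-root factor inside the convolution I would first apply the Cauchy--Schwarz inequality with respect to the finite measure $(-k'(t-s))(t-s)\,\textup{d}s$ on $(0,t)$, whose total mass equals $\int_{0}^{t}\sigma(-k'(\sigma))\,\textup{d}\sigma\leq\|k\|_{L^{1}(0,T)}$ (integrate by parts, noting $\sigma k(\sigma)\to0$ as $\sigma\to0_{+}$ since $k$ is bounded). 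This linearizes the square-root, the factor $\|k\|_{L^{1}(0,T)}$ cancels in the subsequent Young step, and after the change of variables $\sigma=t-s$ the residual term $\int_{0}^{t}(-sk'(s))|\varphi^{t-s}(u(t-s))|\,\textup{d}s$ emerges. Taking $c$ to be a fixed multiple of $1+c_{1}+c_{1}^{2}+c_{2}+c_{2}^{2}$, hence independent of $u$, $g$ and $k$, then yields \eqref{eq:device-1}.

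Finally, \eqref{eq:device-2} follows by integrating \eqref{eq:device-1} over $(0,t)$: both the left-hand side and $\partial_{\tau}[k*\varphi^{\bullet}(u(\bullet))](\tau)$ integrate to $[k*\varphi^{\bullet}(u(\bullet))](t)$ because the convolution vanishes at the origin, the term $k(\tau)\varphi^{0}(u_{0})$ integrates to $\varphi^{0}(u_{0})\int_{0}^{t}k(s)\,\textup{d}s$, the constant-in-$\tau$ error acquires a factor $t$, and for the remaining error I would interchange the order of integration in $\int_{0}^{t}\int_{0}^{\tau}(-sk'(s))|\varphi^{\tau-s}(u(\tau-s))|\,\textup{d}s\,\textup{d}\tau$ and use $\int_{0}^{t}(-sk'(s))\,\textup{d}s\leq\|k\|_{L^{1}(0,T)}$ together with the nonnegativity of $|\varphi^{\bullet}(u(\bullet))|$ to bound it by $\|k\|_{L^{1}(0,T)}\int_{0}^{t}|\varphi^{s}(u(s))|\,\textup{d}s$. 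The main obstacle I anticipate is the bookkeeping in the third step: arranging the exact cancellation of the $k(t)\varphi^{t}(u(t))$ terms while forcing every remaining error into precisely the three prescribed shapes, which is what dictates both the $tk(t)\leq\|k\|_{L^{1}(0,T)}$ observation and the weighted Cauchy--Schwarz estimate.
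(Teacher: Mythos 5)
Your proposal is correct and follows essentially the same route as the paper's proof: the same decomposition of $\partial_{t}[k*(u-u_{0})]$ into the boundary term $k(t)(u(t)-u_{0})$ and the convolution against $-k'$, the same insertion of the comparison elements $z_{t}$ from \textup{(A1)} and $w_{t}$ from \textup{(A2)} combined with the subdifferential inequality, the same bounds $t\,k(t)\leq\|k\|_{L^{1}(0,T)}$ and $\int_{0}^{t}(-s\,k'(s))\,\mathrm{d}s\leq\|k\|_{L^{1}(0,T)}$ with Young's inequality to absorb the $\|g(t)\|_{H}$ factors, and the same integration plus Fubini argument for \eqref{eq:device-2}. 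The only cosmetic deviation is your extra Cauchy--Schwarz step with respect to the measure $(t-s)\bigl(-k'(t-s)\bigr)\,\mathrm{d}s$, where the paper applies Young's inequality pointwise inside the integral; both yield the same error terms.
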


\begin{proof}  
 Since $u_{0} \in D(\varphi^{0})$ and the assumption \textup{(A1)} holds, there is a constant $c_{1} \in [0,\infty)$ satisfying the following property\textup{:} for each $t \in [0,T]$, there exists $z_{t} \in D(\varphi^{t})$ such that  
\begin{align}
  \| z_{t} - u_{0} \|_{H}
  &\leq 
  c_{1} t \bigl( 1 + |\varphi^{0}(u_{0})| \bigr)^{1/2},
  \label{eq:z-approx-1}
  \\
  \varphi^{t}(z_{t})
  &\leq 
  \varphi^{0}(u_{0})
  + c_{1} t \bigl( 1 + |\varphi^{0}(u_{0})| \bigr).
  \label{eq:z-approx-2}
\end{align} 
 Similarly, from the assumption \textup{(A2)}, there is a constant $c_{2} \in [0,\infty)$ satisfying the following property\textup{:} for each $t \in (0,T)$, there exists  $w_{t} \in L^{2}(0,t;H)$ such that  
\begin{align} 
  \|w_{t}(s)-u(s)\|_{H} &\leq c_{2}|t-s|\bigl(1+ \bigl| \varphi^{s} \bigl( u(s) \bigr) \bigr| \bigr)^{1/2}, \label{eq:w-approx-1}
  \\
  \varphi^{t}\bigl(w_{t}(s)\bigr) &\leq \varphi^{s} \bigl( u(s) \bigr)  
  + c_{2}|t-s|\bigl(1+ \bigl|\varphi^{s} \bigl(u(s) \bigr) \bigr| \bigr)  \label{eq:w-approx-2}
\end{align}
for~a.e.~$s \in (0,t)$.  
By virtue of the fundamental theorem of calculus and direct computations, we deduce that 
\begin{align*}
  & 
  \bigl(\partial_{t}[k*(u-u_{0})](t),\, g(t)\bigr)_{H} 
  \\
  &\quad = 
  \bigl(k(0)\, (u(t)-u_{0}) + [k'*(u-u_{0})](t),\, g(t)\bigr)_{H}    
  \\
  &\quad = 
  k(0)\,\varphi^{t}(u(t)) 
  + \bigl[ k'*\varphi^{ \bullet }(u(  \bullet )) \bigr](t)   
  \\
  &\qquad 
  + k(0)\Bigl[ \bigl( u(t)-u_{0},\, g(t) \bigr)_{H} -\varphi^{t}(u(t))\Bigr] 
  \\
  &\qquad 
  + \int_{0}^{t} k'(s)\Bigl[ \bigl( u(t-s)-u_{0},\, g(t) \bigr)_{H} - \varphi^{t-s}(u(t-s))\Bigr]\,\textup{d}s 
  \\
  &\quad = 
  \partial_{t} \bigl[ k*\varphi^{ \bullet }(u( \bullet )) \bigr](t) 
  \\
  &\qquad 
  + \left(k(t)-\int_{0}^{t} k'(s)\,\textup{d}s\right) \Bigl[ \bigl( u(t)-u_{0},\, g(t) \bigr)_{H}-\varphi^{t}(u(t))\Bigr]
  \\
  &\qquad 
  + \int_{0}^{t}  \bigl( -k'(s) \bigr)   \Bigl[ \bigl(u_{0}-u(t-s),\, g(t) \bigr)_{H} + \varphi^{t-s}(u(t-s))\Bigr]\,\textup{d}s 
  \\
  &\quad = 
  \partial_{t} \bigl[ k*\varphi^{ \bullet }(u( \bullet )) \bigr](t)  
  + k(t) \Bigl[ \bigl( u(t)-u_{0},\, g(t) \bigr)_{H}-\varphi^{t}(u(t))\Bigr]
  \\
  &\qquad 
  + \int_{0}^{t}  \bigl( -k'(s) \bigr)  \, \Bigl[ \bigl(u(t) - u(t-s),\, g(t) \bigr)_{H}  - \varphi^{t} (u(t))  + \varphi^{t-s}(u(t-s))\Bigr]\,\textup{d}s 
  \\
  &\quad = 
  \partial_{t} \bigl[ k*\varphi^{ \bullet }(u( \bullet )) \bigr](t) 
  + k(t)\Bigl[ \bigl( u(t)-z_{t},\, g(t) \bigr)_{H} -\varphi^{t}(u(t)) \Bigr] 
  \\
  &\qquad +
  k(t)  \bigl( z_{t} - u_{0},\, g(t) \bigr)_{H}  
  \\
  &\qquad 
  + \int_{0}^{t}  \bigl( -k'(s) \bigr)  \Bigl[ \bigl( u(t)-w_{t}(t-s),\, g(t) \bigr)_{H} - \varphi^{t}(u(t))+\varphi^{t-s}(u(t-s))\Bigr]\,\textup{d}s
  \\
  &\qquad 
  + \int_{0}^{t}  \bigl( -k'(s) \bigr)  \bigl( w_{t}(t-s) - u(t-s),\, g(t) \bigr)_{H} \,\textup{d}s
\end{align*}
for~a.e.~$t \in (0,T)$, where $k'$ denotes the weak derivative of $k$. 
Since $k$ is nonnegative and nonincreasing, and $g(t) \in \partial \varphi^{t}(u(t))$ for~a.e.~$t \in (0,T)$, it follows from \eqref{eq:z-approx-1}, \eqref{eq:z-approx-2}, \eqref{eq:w-approx-1}, \eqref{eq:w-approx-2}  and the Cauchy--Schwarz inequality that
\begin{align} 
  & 
  \bigl(\partial_{t}[k*(u-u_{0})](t),\, g(t)\bigr)_{H} \notag 
  \\
  &\quad \geq 
  \partial_{t}\bigl[k*\varphi^{ \bullet }(u( \bullet ))\bigr](t) 
  + k(t) \bigl( - \varphi^{t}(z_{t}) + \varphi^{0}(u_{0}) - \varphi^{0}(u_{0})\bigr) \notag 
  \\
  &\qquad -
  k(t)\|z_{t}-u_{0} \|_{H} \| g(t)\|_{H}  \notag 
  \\
  &\qquad 
  + \int_{0}^{t}  \bigl( -k'(s) \bigr)   \, \Bigl( -\varphi^{t}(w_{t}(t-s)) + \varphi^{t-s}(u(t-s))\Bigr)\,\textup{d}s \notag 
  \\
  &\qquad 
   -  \int_{0}^{t}  \bigl( -k'(s) \bigr)  \, \| w_{t}(t-s) - u(t-s) \|_{H} \| g(t) \|_{H} \,\textup{d}s  \notag 
  \\
  &\quad \geq 
  \partial_{t}\bigl[k*\varphi^{ \bullet }(u( \bullet ))\bigr](t) 
  - c_{1} t k(t)  \bigl(1 + |\varphi^{0}(u_{0})| \bigr) 
  - k(t) \varphi^{0}(u_{0}) 
  \notag 
  \\
  &\qquad -
   \bigl[c_{1} t k(t) \bigl(1 + |\varphi^{0}(u_{0})| \bigr) \bigr]^{1/2} 
  \, \bigl(c_{1} t k(t) \bigr)^{1/2} \| g(t)\|_{H}   \notag 
  \\
  &\qquad 
  - c_{2} \int_{0}^{t}  \bigl( -sk'(s) \bigr)  \Bigl( 1+ |\varphi^{t-s}(u(t-s))| \Bigr) \,\textup{d}s  \notag 
  \\
  &\qquad 
  - c_{2} \int_{0}^{t}  \bigl( -sk'(s) \bigr)  \Bigl( 1+ |\varphi^{t-s}(u(t-s))| \Bigr)^{1/2} \| g(t) \|_{H} \,\textup{d}s \label{eq:lem:proof:ineq1}
\end{align}
 for a.e. $t \in (0,T)$.  Since $k$ is nonnegative and nonincreasing, it holds that 
\begin{align} \label{eq:tk}
  0 \leq t k(t) 
  \;=\; \int_{0}^{t} k(t)\,\mathrm{d}s
  \;\leq\; \int_{0}^{t} k(s)\,\mathrm{d}s
  \;\leq\; \|k\|_{L^{1}(0,T)}
\end{align}
for a.e.~$t \in (0,T)$. 
Moreover, applying integration by parts and using the fact that $k$ is nonincreasing, we get  
\begin{align} 
    0 \leq \int_{0}^{t} \bigl(-s k'(s)\bigr)\,\mathrm{d}s
    &= \Bigl[ -s k(s)\Bigr]_{0}^{t}
       + \int_{0}^{t} k(s)\,\mathrm{d}s 
    =  -  t k(t) + \int_{0}^{t} k(s)\,\mathrm{d}s   \notag \\
    &\leq  \|k\|_{L^{1}(0,T)}
    \label{eq:skp}
\end{align}
for~all~$t \in (0,T]$. 
Combining \eqref{eq:lem:proof:ineq1}, \eqref{eq:tk}, \eqref{eq:skp} and Young's inequality, for every $\varepsilon \in (0,1)$, 
we obtain  
\begin{align*}
  &\bigl(\partial_{t}[k*(u-u_{0})](t),\, g(t)\bigr)_{H} \\ 
  &\quad \geq 
  \partial_{t}\bigl[k*\varphi^{\bullet}(u(\bullet))\bigr](t) 
  - c_{1} t k(t) \bigl(1 + |\varphi^{0}(u_{0})| \bigr) 
  - k(t)\varphi^{0}(u_{0}) 
  \\
  &\qquad 
  - \dfrac{1}{2\varepsilon}\, c_{1} t k(t) \bigl(1 + |\varphi^{0}(u_{0})| \bigr) 
  - \dfrac{\varepsilon}{2}\, c_{1} t k(t) \| g(t)\|_{H}^{2} 
  \\  
  &\qquad 
  - c_{2} \int_{0}^{t}  \bigl(-s k'(s)\bigr)\bigl(1+ |\varphi^{t-s}(u(t-s))|\bigr)\,\textup{d}s  
  \\ 
  &\qquad 
  - \dfrac{1}{2\varepsilon}\, c_{2} \int_{0}^{t}  \bigl(-s k'(s)\bigr)\bigl(1+ |\varphi^{t-s}(u(t-s))|\bigr)\,\textup{d}s 
  \\ 
  &\qquad 
  - \dfrac{\varepsilon}{2}\, c_{2} \int_{0}^{t} \bigl(-s k'(s)\bigr)\| g(t)\|_{H}^{2}\,\textup{d}s 
  \\[1ex]  
  &\quad =  
  \partial_{t}\bigl[k*\varphi^{\bullet}(u(\bullet))\bigr](t)
  - k(t)\varphi^{0}(u_{0}) 
  \\ &\qquad 
  - \dfrac{\varepsilon}{2}\left( c_{1} t k(t) + c_{2}\!\int_{0}^{t}\!\bigl(-s k'(s)\bigr)\,\mathrm{d}s \right)\| g(t)\|_{H}^{2}
  \\ &\qquad 
  - c_{1}\left(1 + \dfrac{1}{2\varepsilon}\right)t k(t) \bigl(1 + |\varphi^{0}(u_{0})| \bigr) 
  \\ &\qquad 
  - c_{2}\left(1 + \dfrac{1}{2\varepsilon}\right)\int_{0}^{t}  \bigl(-s k'(s)\bigr)\,\textup{d}s 
  \\ &\qquad 
  - c_{2}\left(1 + \dfrac{1}{2\varepsilon}\right)\int_{0}^{t}  \bigl(-s k'(s)\bigr)\,|\varphi^{t-s}(u(t-s))|\,\textup{d}s 
  \\[1ex]  
  &\quad \geq  
  \partial_{t}\bigl[k*\varphi^{\bullet}(u(\bullet))\bigr](t)
  - k(t)\varphi^{0}(u_{0}) 
  - \dfrac{\varepsilon}{2}(c_{1}+c_{2})\,\|k\|_{L^{1}(0,T)}\| g(t)\|_{H}^{2} 
  \\ &\qquad 
  - c_{1}\left(1 + \dfrac{1}{2\varepsilon}\right)\|k\|_{L^{1}(0,T)} \bigl(1 + |\varphi^{0}(u_{0})| \bigr) 
  - c_{2}\left(1 + \dfrac{1}{2\varepsilon}\right)\|k\|_{L^{1}(0,T)}
  \\ &\qquad 
  - c_{2}\left(1 + \dfrac{1}{2\varepsilon}\right)\int_{0}^{t} \bigl(-s k'(s)\bigr)\,|\varphi^{t-s}(u(t-s))|\,\textup{d}s 
  \\&\quad \geq  
  \partial_{t}\bigl[k*\varphi^{\bullet}(u(\bullet))\bigr](t)
  - k(t)\varphi^{0}(u_{0}) 
  - \dfrac{\varepsilon}{2} \, (c_{1}+c_{2})\,  \|k\|_{L^{1}(0,T)} \, \| g(t)\|_{H}^{2} 
  \\ &\qquad 
  - (c_{1}+c_{2}) \left(1 + \dfrac{1}{2\varepsilon}\right) \|k\|_{L^{1}(0,T)} \bigl(1 + |\varphi^{0}(u_{0})| \bigr) 
  \\ &\qquad 
  - c_{2} \left(1 + \dfrac{1}{2\varepsilon}\right) \int_{0}^{t} \bigl(-s k'(s)\bigr)\,|\varphi^{t-s}(u(t-s))|\,\textup{d}s 
\end{align*}
for a.e.\ $t \in (0,T)$. 
Thus we can take a constant $c \in (0,\infty)$ such that 
\eqref{eq:device-1} holds.  

Next, we prove \eqref{eq:device-2}. 
Using Fubini--Tonelli's theorem together with \eqref{eq:skp}, we obtain
\begin{align*}
  & 
  \int_{0}^{t}\!\int_{0}^{s} 
      \bigl( -r k'(r) \bigr) \,
     \bigl|\varphi^{ s-r }(u(s-r))\bigr| \,\textup{d}r\,\textup{d}s \\
  &\quad =  
  \int_{0}^{t}  \bigl( -r k'(r) \bigr) 
     \int_{r}^{t} \bigl|\varphi^{s-r}(u(s-r))\bigr|
     \,\textup{d}s\,\textup{d}r \\
  &\quad =  
  \int_{0}^{t}  \bigl( -r k'(r) \bigr) 
     \int_{0}^{t-r} \bigl|\varphi^{ \sigma }(u(  \sigma ))\bigr|
     \,\textup{d}  \sigma  \,\textup{d}r 
  \leq    \| k\|_{L^{1}(0,T)} \int_{0}^{t} \left| \varphi^{\sigma} \bigl( u(\sigma) \bigr)\right| \, \textup{d} \sigma
\end{align*}
for~all~$t \in (0,T]$. 
Hence,  integrating  both sides of \eqref{eq:device-1} over $(0,t)$, for every $\varepsilon \in (0,1)$,  we obtain
\begin{align*}
  & 
  \int_{0}^{t}
    \bigl(\partial_{ t }[k*(u-u_{0})](s),\, g(s)\bigr)_{H}
    \,\textup{d}s \\
  &\quad \geq  
  \bigl[k*\varphi^{ \bullet }(u( \bullet ))\bigr](t)
  - \varphi^{0}(u_{0}) \int_{0}^{t} k(s)\,\textup{d}s 
  - \varepsilon\,  c  \, \|k\|_{L^{1}(0,T)} 
  \int_{0}^{t}\! \|g(s)\|_{H}^{2} \,\textup{d}s \\
  &\qquad
  - \dfrac{c}{\varepsilon} t \, \|k\|_{L^{1}(0,T)} \,  
     \bigl( 1+|\varphi^{0}(u_{0})| \bigr)  \\
  &\qquad
  - \dfrac{c}{\varepsilon} \int_{0}^{t}\!\int_{0}^{s}
       \bigl(-r k'(r)\bigr)  \,
      \bigl|\varphi^{s-r}(u(s-r))\bigr|\,\textup{d}r\,\textup{d}s \\
  &\quad \geq  
  \bigl[k*\varphi^{ \bullet }(u( \bullet ))\bigr](t)
  - \varphi^{0}(u_{0}) \int_{0}^{t}k(s) \, \textup{d}s  
  - \varepsilon\,  c  \,\|k\|_{L^{1}(0,T)}
    \int_{0}^{t}\!\|g(s)\|_{H}^{2}\,\textup{d}s  \notag \\
  &\qquad
  - \dfrac{ c }{\varepsilon}  t  \,\|k\|_{L^{1}(0,T)}\bigl(1+|\varphi^{0}(u_{0})|\bigr)  
  - \dfrac{ c }{\varepsilon} \|k\|_{L^{1}(0,T)} 
    \int_{0}^{t} \bigl|\varphi^{s}(u(s))\bigr|\,\textup{d}s  
\end{align*}
for all~$t \in (0,T]$. 
 Thus \eqref{eq:device-2} has been proved. This completes the proof.  
\end{proof}

Here, combining \cref{lem:chainrule1} with the results stated in 
\cref{subsec:time-nonlocal-ops}, we can obtain the following lemma, 
which will play a crucial role in the proof of \cref{thm:main2}. 

\begin{lemma}[Nonlocal chain-rule formula for $(k,\ell) \in PC$] \label{lem:chainrule2}
For each $t \in [0,T]$, let $\varphi^{t} \colon H \to (-\infty,\infty]$ be a proper lower-semicontinuous convex functional, and suppose that  both  assumptions \textup{(A1)} and  \textup{(A2)}  are satisfied.  
Let $(k,\ell) \in PC$, $u_{0} \in D(\varphi^{0})$, and let $u,g \in L^{2}(0,T;H)$ be such that $u-u_{0} \in D(\mathcal{B})$  (see \cref{subsec:time-nonlocal-ops}), 
$\varphi^{ \bullet }(u( \bullet )) \in L^{1}(0,T)$, $u(t) \in D(\partial \varphi^{t})$ and 
$g(t) \in \partial \varphi^{t}(u(t))$ for~a.e.~$t \in (0,T)$. 
Then, for all $\varepsilon \in (0,1)$, the following inequality holds\/\textup{:}  
\begin{align} 
  & 
  \int_{0}^{t}
    \bigl(\partial_{ t }[k*(u-u_{0})](s),\, g(s)\bigr)_{H}
    \,\textup{d}s  \notag \\
  &\quad \geq  
  \bigl[k*\varphi^{ \bullet }(u( \bullet )) \bigr](t) - \varphi^{0}(u_{0}) \int_{0}^{t}k(s) \, \textup{d}s  
  - \varepsilon\,  c  \,\|k\|_{L^{1}(0,T)}
    \int_{0}^{t}\!\|g(s)\|_{H}^{2}\,\textup{d}s  \notag \\
  &\qquad  
  - \dfrac{ c }{\varepsilon}\,  t  \,\|k\|_{L^{1}(0,T)}\bigl(1+|\varphi^{0}(u_{0})|\bigr)  \notag \\
  &\qquad  
  - \dfrac{ c  }{\varepsilon} \, \|k\|_{L^{1}(0,T)}
    \int_{0}^{t} \bigl|\varphi^{s}(u(s))\bigr|\,\textup{d}s \label{eq:device-3}
\end{align}
for all $t \in (0,T]$, where  $c$  is the constant  of  \eqref{eq:device-2}  in  \cref{lem:chainrule1} and is independent of $u$, $g$, $k$ and $\varepsilon$. 
In particular, there exists a constant $C  \in [0,\infty)$ independent of $u$ and $g$ such that, for all $\varepsilon \in (0,1)$, it holds that   
\begin{align} 
  &\int_{0}^{t}
    \bigl(\partial_{ t  }[k*(u-u_{0})](s),\, g(s)\bigr)_{H}
    \,\textup{d}s  \notag \\
  &\quad \geq  
  \bigl[k*\varphi^{ \bullet }(u( \bullet )) \bigr](t) - \varphi^{0}(u_{0}) \int_{0}^{t}k(s) \, \textup{d}s  
  - \varepsilon\,  C 
    \int_{0}^{t}\!\|g(s)\|_{H}^{2}\,\textup{d}s  \notag \\
  &\qquad  
  - \dfrac{ C }{\varepsilon} \left[ T \bigl(1 + |\varphi^{0}(u_{0})|\bigr) + \int_{0}^{t} \bigl|\varphi^{s}(u(s))\bigr| \,\textup{d}s \right]  
  \label{eq:device-4}
\end{align}
for all $t \in (0,T]$.  
\end{lemma}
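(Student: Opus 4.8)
The strategy is to reduce the general completely positive kernel $(k,\ell)\in PC$ to the Sobolev-regular situation already settled in \cref{lem:chainrule1}, by means of the Yosida approximation $k_\lambda$ of the kernel introduced in \cref{subsec:time-nonlocal-ops}. Recall that $k_\lambda\in W^{1,1}_{\textup{loc}}([0,\infty))$ solves the Volterra equation \eqref{eq:Volterra-klambda}, is nonnegative and nonincreasing (hence bounded, since $k_\lambda\le k_\lambda(0)=1/\lambda$), satisfies $k_\lambda\to k$ strongly in $L^1(0,T)$ as $\lambda\to 0_+$, and that $\mathcal{B}_\lambda(w)=\partial_t(k_\lambda*w)$ for $w\in L^2(0,T;H)$. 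Since $k_\lambda|_{[0,T]}\in W^{1,1}(0,T)$ is nonnegative and nonincreasing and the remaining hypotheses on $u_0$, $u$, $g$ and $\{\varphi^t\}$ (together with \textup{(A1)} and \textup{(A2)}) are precisely those of \cref{lem:chainrule1}, I would apply \eqref{eq:device-2} with $k$ replaced by $k_\lambda$. Because the constant $c$ there is independent of the kernel, this yields, for every $\varepsilon\in(0,1)$ and all $t\in(0,T]$, the estimate \eqref{eq:device-3} with $k_\lambda$ in place of $k$ throughout and with $\mathcal{B}_\lambda(u-u_0)=\partial_t[k_\lambda*(u-u_0)]$ on the left-hand side, with the same constant $c$.

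The core of the proof is the passage $\lambda\to 0_+$. For the left-hand side I would invoke the extra hypothesis $u-u_0\in D(\mathcal{B})$ — this is the only place it is used, and the reason it is imposed here but not in \cref{lem:chainrule1} — so that, $\mathcal{B}$ being a single-valued maximal monotone operator on $L^2(0,T;H)$, \cref{prop:JY} gives $\mathcal{B}_\lambda(u-u_0)\to\mathring{\mathcal{B}}(u-u_0)=\partial_t[k*(u-u_0)]$ strongly in $L^2(0,T;H)$; pairing with $g\in L^2(0,T;H)$ via the Cauchy--Schwarz inequality then makes $\int_0^t(\mathcal{B}_\lambda(u-u_0)(s),g(s))_H\,\textup{d}s$ converge to $\int_0^t(\partial_t[k*(u-u_0)](s),g(s))_H\,\textup{d}s$ uniformly in $t\in[0,T]$. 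On the right-hand side, $k_\lambda\to k$ in $L^1(0,T)$ gives $\|k_\lambda\|_{L^1(0,T)}\to\|k\|_{L^1(0,T)}$ and $\int_0^t k_\lambda\to\int_0^t k$ uniformly in $t$, while Young's convolution inequality gives $\|k_\lambda*\varphi^\bullet(u(\bullet))-k*\varphi^\bullet(u(\bullet))\|_{L^1(0,T)}\le\|k_\lambda-k\|_{L^1(0,T)}\,\|\varphi^\bullet(u(\bullet))\|_{L^1(0,T)}\to 0$. Fixing a subsequence $\lambda_n\to 0_+$ along which $k_{\lambda_n}*\varphi^\bullet(u(\bullet))\to k*\varphi^\bullet(u(\bullet))$ pointwise a.e.\ and combining all these convergences, the inequality passes to the limit and produces \eqref{eq:device-3} (for a.e.\ $t$, which is all the a.e.-defined map $t\mapsto[k*\varphi^\bullet(u(\bullet))](t)$ permits, the remaining terms being continuous in $t$), with $c$ independent of $u$, $g$, $k$ and $\varepsilon$. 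I expect this limiting step to be the main obstacle: for the convolution term only $L^1$-regularity of $\varphi^\bullet(u(\bullet))$ is available, so no better than subsequential a.e.\ convergence can be claimed, and one must simultaneously control the strong convergence of $\mathcal{B}_\lambda(u-u_0)$.

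Finally, \eqref{eq:device-4} is obtained from \eqref{eq:device-3} by routine bookkeeping. Since $k$ is fixed, $\|k\|_{L^1(0,T)}<\infty$, and I would set $C:=c\,\|k\|_{L^1(0,T)}$, a constant depending only on $k$, $T$ and the Kenmochi constants $c_1,c_2$, in particular independent of $u$ and $g$. Then $\varepsilon c\|k\|_{L^1(0,T)}=\varepsilon C$; moreover $t\le T$ gives $\frac{c}{\varepsilon}\,t\,\|k\|_{L^1(0,T)}(1+|\varphi^0(u_0)|)\le\frac{C}{\varepsilon}\,T\,(1+|\varphi^0(u_0)|)$, and $\frac{c}{\varepsilon}\,\|k\|_{L^1(0,T)}\int_0^t|\varphi^s(u(s))|\,\textup{d}s=\frac{C}{\varepsilon}\int_0^t|\varphi^s(u(s))|\,\textup{d}s$. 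Substituting these three bounds into \eqref{eq:device-3}, and leaving the terms $[k*\varphi^\bullet(u(\bullet))](t)$ and $\varphi^0(u_0)\int_0^t k(s)\,\textup{d}s$ untouched, yields exactly \eqref{eq:device-4}.
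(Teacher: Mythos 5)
Your proposal is correct and follows essentially the same route as the paper: apply \eqref{eq:device-2} with the regularized kernel $k_{\lambda}$ solving \eqref{eq:Volterra-klambda}, use $u-u_{0}\in D(\mathcal{B})$ so that $\mathcal{B}_{\lambda}(u-u_{0})\to\mathcal{B}(u-u_{0})$ strongly in $L^{2}(0,T;H)$ together with $k_{\lambda}\to k$ in $L^{1}(0,T)$, and pass to the limit, then obtain \eqref{eq:device-4} by the trivial bounds $t\le T$ and $C:=c\,\|k\|_{L^{1}(0,T)}$. Your treatment of the limit in the convolution term (Young's inequality plus subsequential a.e.\ convergence) is in fact more explicit than the paper's one-line limit passage, and the resulting ``a.e.\ $t$'' formulation is the natural reading there since $k*\varphi^{\bullet}(u(\bullet))$ is only defined a.e.\ for $k\in L^{1}$.
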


\begin{proof}
For each $\lambda \in (0,\infty)$, let $k_{\lambda}$ be the solution to the Volterra equation \eqref{eq:Volterra-klambda} introduced in \cref{subsec:time-nonlocal-ops}.
As stated in \cref{subsec:time-nonlocal-ops}, $k_{\lambda}$ belongs to $W^{1,1}(0,T)$ and is nonnegative and nonincreasing.
Therefore, for each $\varepsilon \in (0,1)$,  it follows from \eqref{eq:device-2} that  
\begin{align} 
  & 
  \int_{0}^{t}
    \bigl(\partial_{ t }[k_{\lambda}*(u-u_{0})](s),\, g(s)\bigr)_{H}
    \,\textup{d}s  \notag \\
  &\quad \geq  
   \bigl[k_{\lambda}*\varphi^{\bullet}(u(\bullet)) \bigr](t)  - \varphi^{0}(u_{0}) \int_{0}^{t}k_{\lambda}(s) \, \textup{d}s  
  - \varepsilon\,  c  \,\|k_{\lambda}\|_{L^{1}(0,T)}
    \int_{0}^{t}\!\|g(s)\|_{H}^{2}\,\textup{d}s  \notag \\
  &\qquad  
  - \dfrac{ c }{\varepsilon} \,   t  \, \|k_{\lambda}\|_{L^{1}(0,T)} \, \bigl(1+|\varphi^{0}(u_{0})|\bigr)  \notag \\
  &\qquad  
  - \dfrac{  c  }{\varepsilon}  \, \|k_{\lambda}\|_{L^{1}(0,T)}  
    \int_{0}^{t} \bigl|\varphi^{s}(u(s))\bigr|\,\textup{d}s 
    \label{eq:chainrule2-approx} 
\end{align}
for all $t \in (0,T]$, where $c$ is the constant of \eqref{eq:device-2}  in  \cref{lem:chainrule1} and is independent of $u$, $g$, $k$ and $\varepsilon$.   
By  virtue of the properties of the Yosida approximation (see \cref{prop:JY}) together with the assumption $u-u_{0} \in D(\mathcal{B})$, we have 
\begin{align*}
\partial_{t}\bigl[k_{\lambda}*(u-u_{0})\bigr]
=
\mathcal{B}_{\lambda}(u-u_{0}) 
\to  
\mathcal{B}(u-u_{0}) 
=
\partial_{t}\bigl[k*(u-u_{0})\bigr]
\end{align*} 
in $L^{2}(0,T;H)$ as $\lambda \to 0_{+}$.  
Moreover, as stated in \cref{subsec:time-nonlocal-ops}, we have $k_{\lambda} \to k$ in $L^{1}(0,T)$ as $\lambda \to 0_{+}$.    
Hence, by letting $\lambda \to 0_{+}$ in \eqref{eq:chainrule2-approx}, we  obtain  \eqref{eq:device-3}. The proof is complete. 
\end{proof}

\section{Proof of \cref{thm:wellposedness}}  \label{Sec: Proof of well-posedness}

In this section, we  prove the  uniqueness and continuous dependence on initial data of strong solutions to \eqref{E:main-equation1}.    
Let $(u_{i},\xi_{i}) \in \eqref{E:main-equation1}_{u_{0,i},\,f_{i}}$ for $i=1,2$, where $u_{0,i} \in H$ and $f_{i} \in L^{2}(0,T;H)$.    
Then  we have  $k*\bigl( (u_{1}-u_{2}) - (u_{0,1}-u_{0,2}) \bigr) \in W^{1,2}(0,T;H)$ and  
  $\bigl[ k* \bigl( (u_{1}-u_{2}) - (u_{0,1}-u_{0,2}) \bigr) \bigr] (0) = 0$, that is, $(u_{1}-u_{2}) - (u_{0,1}-u_{0,2})  \in D(\mathcal{B})$ (see \cref{subsec:time-nonlocal-ops}). 
Furthermore, since $\xi_{1}(t) \in \partial \varphi^{t}\bigl(u_{1}(t)\bigr)$ and $\xi_{2}(t) \in \partial \varphi^{t}\bigl(u_{2}(t)\bigr)$ for a.e.\ $t \in (0,T)$, it follows from the monotonicity of $\partial \varphi^{t}$  (see \cref{subsec:subdifferential}) that   
\begin{align} \label{eq:monotonicity-subdiff}
  \int_{0}^{t} \bigl( \xi_{1}(s) - \xi_{2}(s),\, u_{1}(s) - u_{2}(s) \bigr)_{H} \, \textup{d}s \geq 0
\end{align}
for a.e.\ $t \in (0,T)$.
 Moreover, we observe that 
\begin{align*}
  \mathcal{B} \bigl[(u_{1}-u_{2}) - (u_{0,1}-u_{0,2}) \bigr](t) 
  = -\bigl(\xi_{1}(t) - \xi_{2}(t)\bigr) +  f_{1}(t) - f_{2}(t) 
\end{align*}
for a.e.\ $t \in (0,T)$.
By multiplying  both sides of the above identity by $u_{1}-u_{2}\in L^{2}(0,T;H)$ and integrating  it  over $(0,t)$, 
we deduce from \eqref{eq:monotonicity-subdiff}, \cref{P:frac_chain} and H\"older's inequality that 
\begin{align} 
  & \dfrac{1}{2} \bigl[k*\bigl( \|u_{1}-u_{2}\|_{H}^{2} - \|u_{0,1}-u_{0,2}\|_{H}^{2}\bigr)\bigr](t)  
  \notag  \\& \quad \leq 
  \int_{0}^{t} \bigl( \mathcal{B} \bigl[(u_{1}-u_{2}) - (u_{0,1}-u_{0,2}) \bigr](s) ,  (u_{1}-u_{2})(s) \bigr)_{H}  \, \textup{d}s 
  \notag  \\&\quad = 
  \int_{0}^{t}  -  \bigl(\xi_{1}(s) - \xi_{2}(s),\, u_{1}(s) - u_{2}(s) \bigr)_{H}\,\textup{d}s 
  \notag  \\&\qquad +   
  \int_{0}^{t}  \bigl(f_{1}(s) - f_{2}(s),\, u_{1}(s) - u_{2}(s) \bigr)_{H}\,\textup{d}s  
  \notag  \\&\quad \leq    
  \| f_{1}-f_{2}\|_{L^{2}(0,T;H)} \, \| u_{1}-u_{2}\|_{L^{2}(0,T;H)} 
   \label{eq:1-2-energy-ineq}
\end{align} 
for~a.e.~$t \in (0,T)$. 
Since $k * \ell \equiv 1$ on $(0,T)$ and $(1 * \ell)(t) \leq \|\ell\|_{L^{1}(0,T)}$ for a.e.\ $t \in (0,T)$,  
by convolving both sides of \eqref{eq:1-2-energy-ineq} with $\ell$ and using Young's inequality, we obtain  
\begin{align*} 
&\dfrac{1}{2}\int_{0}^{t}   \| u_{1}(s) - u_{2}(s) \|_{H}^{2}  \, \textup{d}s
 \\ & \quad = 
 \dfrac{1}{2}\int_{0}^{t} \left(  \| u_{1}(s) - u_{2}(s) \|_{H}^{2} - \| u_{0,1}-u_{0,2} \|_{H}^{2}  \right) \, \textup{d}s 
 + 
  \dfrac{t}{2}  \| u_{0,1}-u_{0,2} \|_{H}^{2}
\\ & \quad = 
\dfrac{1}{2} \Bigl[ \ell * \bigl[  k* \bigl( \|u_{1}-u_{2}\|_{H}^{2} - \|u_{0,1}-u_{0,2}\|_{H}^{2} \bigr) \bigr] \Bigr] (t)  
+ 
 \dfrac{t}{2}  \| u_{0,1}-u_{0,2} \|_{H}^{2}
\\ & \quad \leq
   \| \ell \|_{L^{1}(0,T)} \,   \| f_{1}-f_{2}\|_{L^{2}(0,T;H)} \, \| u_{1}-u_{2}\|_{L^{2}(0,T;H)} 
  +  \dfrac{t}{2}  \| u_{0,1}-u_{0,2} \|_{H}^{2}
  \notag  \\&\quad \leq  
  \|\ell\|_{L^{1}(0,T)}^{2} \, \| f_{1}-f_{2}\|_{L^{2}(0,T;H)}^{2} 
  + 
  \dfrac{1}{4} \, \| u_{1}-u_{2}\|_{L^{2}(0,T;H)}^{2} 
\\ & \qquad + 
 \dfrac{T}{2}  \| u_{0,1}-u_{0,2} \|_{H}^{2}
\end{align*}  
for a.e.\ $t \in (0,T)$. 
 Hence  it follows that  
\begin{align*} 
  & \| u_{1} - u_{2}\|_{L^{2}(0,T;H)}^{2}
  \\ & \quad \leq 
  4 \, \|\ell\|_{L^{1}(0,T)}^{2} \, \| f_{1}-f_{2}\|_{L^{2}(0,T;H)}^{2} 
  + 
   2  T \, \| u_{0,1}-u_{0,2} \|_{H}^{2} 
  \\ & \quad \leq 
  \, \max \bigl\{ 4  \, \|\ell\|_{L^{1}(0,T)}^{2},\,  2  T \bigr\}
  \, \Bigl( \| u_{0,1}-u_{0,2} \|_{H}^{2} + \| f_{1}-f_{2}\|_{L^{2}(0,T;H)}^{2} \Bigr).
\end{align*}
This completes the proof. \qed


\section{Proof of \cref{thm:main1}} \label{Sec: proof of existence of strong solutions for Sobolev spaces}

In this section, we provide  a  proof of \cref{thm:main1}. 
Let $\mathcal{A}$ and $\mathcal{B}$ be defined as in \cref{subsec:time-nonlocal-ops}, 
and let $\Phi$ denote the proper lower-semicontinuous convex functional on $L^{2}(0,T;H)$ defined as in  \cref{prop:Phi-t-properties}.

\subsection{Approximate problem} 
For $\nu \in (0,\infty)$ and $\lambda \in (0,1)$, 
we show that the following approximate problem admits a unique strong solution:  
\begin{equation} \label{E:main-equation1-approx}
  (\nu \mathcal{A} + \mathcal{B})(u_{\nu,\lambda} - u_{0}) + \partial \Phi_{\lambda}(u_{\nu,\lambda}) = f 
  \quad \textup{in } L^{2}(0,T;H).
\end{equation} 
It suffices to show that there exists a unique function 
$u_{\nu,\lambda} \in W^{1,2}(0,T;H)$ with $u_{\nu,\lambda}(0) = u_{0}$ such that 
\begin{equation} 
  \nu \, \partial_{t}u_{\nu, \lambda}(t) 
  +  \partial_{t}  \bigl[k*(u_{\nu,\lambda}-u_{0})\bigr](t) 
  + \partial \varphi^{t}_{\lambda} \bigl(u_{\nu,\lambda}(t)\bigr) 
  = f(t)
  \quad \textup{ in $H$ } 
\end{equation}
for a.e.\ $t \in (0,T)$. 
As already mentioned  in \cref{subsec:subdifferential,subsec:Time-dependent subdifferential}, the mapping $(t,w) \mapsto \partial \varphi^{t}_{\lambda}(w)$ is a Carath\'eodory function,  that is, 
for each $w \in H$, the map $t \mapsto \partial \varphi^{t}_{\lambda}(w)$ is strongly measurable in $(0,T)$, 
and moreover, for each $t \in [0,T]$, the map $w \mapsto \partial \varphi^{t}_{\lambda}(w)$ is continuous  in $H$ 
(see \cref{prop:JY,prop:MY,prop:Phi}). 
The following lemma guarantees that the approximate problem \eqref{E:main-equation1-approx} admits a unique strong solution. 

\begin{lemma}\label{lem:existence-v}
 Let $X$ be a real Banach space.  
Let $k \in L^{1}(0,T)$,  $p \in [1,\infty]$, and   
let $F \colon  [0,T]  \times X \to X$ be a Carath\'eodory function such that there exist a constant $C_{0} \in [0,\infty)$ and a function $\rho \in L^{p}(0,T)$ satisfying the following\/\textup{:} 
\begin{align}
  \|F(t,w) \|_{X} &\leq C_{0} \| w\|_{X} + |\rho(t)|, 
  \label{eq:F-growth}
  \\ 
  \|F(t,x) - F(t,y)\|_{X} &\leq C_{0} \|x-y\|_{X} 
  \label{eq:F-Lip}
\end{align}
for all $w,x,y \in X$ and for a.e.\ $t \in (0,T)$. 
Then, for each $\nu \in (0,\infty)$, $v_{0} \in X$ and  $f \in L^{p}(0,T;X)$, there exists a unique function $v \in W^{1,p}(0,T;X)$ with $v(0)=v_{0}$ such that 
\begin{equation*} 
  \nu \, \partial_{t}v(t) + \partial_{t}\bigl[k*(v-v_{0})\bigr](t) + F(t,v(t)) = f(t) 
  \quad \textup{in } X \quad \textup{for $t \in (0,T)$.} 
\end{equation*}
\end{lemma}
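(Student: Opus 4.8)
The plan is to recast the equation as a fixed-point problem for $g := v'$ and to solve it by a Banach fixed-point argument in a weighted norm, in the spirit of the proof of \cref{L:Volterra}.

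First I would reduce the problem. Setting $w := v - v_{0}$, any candidate solution satisfies $w \in W^{1,p}(0,T;X)$ with $w(0) = 0$, so the standard convolution identity (valid for $k \in L^{1}(0,T)$ and $w \in W^{1,p}(0,T;X)$ with $w(0) = 0$) gives $\partial_{t}(k*w) = k*w'$. Hence, writing $g := v' \in L^{p}(0,T;X)$, a function $v \in W^{1,p}(0,T;X)$ with $v(0) = v_{0}$ solves the stated equation if and only if $g$ solves
\[
\nu\, g(t) + (k*g)(t) + F\!\left(t,\; v_{0} + \int_{0}^{t} g(s)\,\textup{d}s\right) = f(t)
\qquad \textup{for a.e. } t \in (0,T),
\]
in which case $v(t) = v_{0} + \int_{0}^{t} g(s)\,\textup{d}s$; conversely, any $g \in L^{p}(0,T;X)$ solving this equation furnishes such a $v$. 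Thus it suffices to prove that the displayed Volterra equation has a unique solution $g \in L^{p}(0,T;X)$.

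To this end I would define $\Lambda \colon L^{p}(0,T;X) \to L^{p}(0,T;X)$ by $[\Lambda(g)](t) := \nu^{-1}\bigl(f(t) - (k*g)(t) - F(t,\, v_{0} + \int_{0}^{t} g(s)\,\textup{d}s)\bigr)$ and show it is a contraction in a suitable norm. Well-definedness rests on two observations: for $g \in L^{p}(0,T;X)$ the map $t \mapsto v_{0} + \int_{0}^{t} g(s)\,\textup{d}s$ is continuous on $[0,T]$, hence strongly measurable, so that---$F$ being a Carath\'eodory function---the composition $t \mapsto F(t,\, v_{0} + \int_{0}^{t} g(s)\,\textup{d}s)$ is strongly measurable in $(0,T)$; and its $L^{p}$-norm is finite by the linear growth \eqref{eq:F-growth} (with $\rho \in L^{p}(0,T)$), H\"older's inequality, and Young's inequality for convolutions, $\|k*g\|_{L^{p}(0,T;X)} \le \|k\|_{L^{1}(0,T)}\,\|g\|_{L^{p}(0,T;X)}$. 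For the contraction, put $\delta(s) := \|g_{1}(s) - g_{2}(s)\|_{X}$ and $m := \nu^{-1}(|k| + C_{0}) \in L^{1}(0,T)$; the Lipschitz bound \eqref{eq:F-Lip} yields the pointwise estimate $\|[\Lambda(g_{1})](t) - [\Lambda(g_{2})](t)\|_{X} \le (m*\delta)(t)$ for a.e. $t \in (0,T)$. Equipping $L^{p}(0,T;X)$ with the equivalent weighted norm $\|h\|_{p,\beta} := \|\textup{e}^{-\beta \bullet}h(\bullet)\|_{L^{p}(0,T;X)}$ and using $\textup{e}^{-\beta t}(m*\delta)(t) = \bigl((\textup{e}^{-\beta\bullet}m)*(\textup{e}^{-\beta\bullet}\delta)\bigr)(t)$ together with Young's inequality, one obtains
\[
\|\Lambda(g_{1}) - \Lambda(g_{2})\|_{p,\beta} \le \left(\int_{0}^{T} \textup{e}^{-\beta\sigma}\,m(\sigma)\,\textup{d}\sigma\right) \|g_{1} - g_{2}\|_{p,\beta};
\]
since $m \ge 0$ and $m \in L^{1}(0,T)$, dominated convergence makes the prefactor tend to $0$ as $\beta \to \infty$, so fixing $\beta_{0}$ large enough renders $\Lambda$ a contraction on the Banach space $(L^{p}(0,T;X),\, \|\bullet\|_{p,\beta_{0}})$.

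Banach's fixed-point theorem then provides a unique fixed point $g \in L^{p}(0,T;X)$, and $v(t) := v_{0} + \int_{0}^{t} g(s)\,\textup{d}s$ is the unique element of $W^{1,p}(0,T;X)$ with $v(0) = v_{0}$ solving the equation, by the equivalence established in the reduction step. I expect the only nonroutine points to be the strong measurability of $t \mapsto F(t,\, v_{0} + \int_{0}^{t} g(s)\,\textup{d}s)$---the standard fact that a Carath\'eodory function composed with a strongly measurable map is strongly measurable, already used elsewhere in the paper---and the verification that $\Lambda$ maps $L^{p}(0,T;X)$ into itself; the contraction estimate itself is a routine Bielecki-type argument parallel to that in the proof of \cref{L:Volterra}.
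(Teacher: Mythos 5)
Your argument is correct and essentially the paper's: both recast the problem as a Banach fixed-point problem and obtain a contraction via exponentially weighted (Bielecki) norms, relying on the identity $\partial_{t}(k*w)=k*w'$ for $w\in W^{1,p}(0,T;X)$ with $w(0)=0$. The only difference is cosmetic: you contract in $L^{p}(0,T;X)$ for the unknown $g=v'$, whereas the paper integrates the equation and contracts on $\{w\in W^{1,p}(0,T;X)\colon w(0)=v_{0}\}$ with a weighted metric controlling both $w$ and $w'$; the underlying estimates are the same.
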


\begin{proof}
Let $\nu \in (0,\infty)$, $v_{0} \in X$ and let $f \in L^{p}(0,T;X)$ be fixed. 
By the fundamental theorem of calculus, it suffices to show that there exists a unique function 
$v \in W^{1,p}(0,T;X)$ with $v(0)=v_{0}$ satisfying
\begin{align} 
    &\nu\,[v(t)-v_{0}] + [k*(v-v_{0})](t) + \int_{0}^{t} F(s,v(s)) \,\textup{d}s
    \notag  \\  
    &\quad = 
    \int_{0}^{t} f(s) \,\textup{d}s 
    \quad \textup{in $X$ }
    \label{eq:int-form} 
\end{align}
for all $t \in (0,T]$. 
Therefore, it suffices to show that there exists a unique function 
$v \in \Theta_{v_{0}} := \{ w \in W^{1,p}(0,T;X) \colon w(0) = v_{0}\}$ such that $v = \Lambda(v)$, 
where $\Lambda \colon \Theta_{v_{0}} \to \Theta_{v_{0}}$ is defined by 
\begin{align*} 
[ \Lambda (w)](\bullet) 
&:= 
v_{0} 
- \dfrac{1}{\nu} [k*(w-v_{0})](\bullet) 
- \dfrac{1}{\nu} \int_{0}^{\bullet} F(s,w(s)) \,\textup{d}s 
+ \dfrac{1}{\nu} \int_{0}^{\bullet}  f(s) \,\textup{d}s 
\\
&=  
v_{0} 
- \dfrac{1}{\nu} 
\Bigl(   
    [k*(w-v_{0})](\bullet) 
  + \bigl[1*F( \bullet ,w( \bullet ))\bigr](\bullet)  
  - (1*f)(\bullet) 
\Bigr)
\end{align*}
for $w \in \Theta_{v_{0}}$. 
Due to \eqref{eq:F-growth}, we note that $F( \bullet ,w( \bullet )) \in L^{p}(0,T;X)$ for every $w \in L^{p}(0,T;X)$, and hence, the mapping $\Lambda$ is well-defined.

For each $c \in (0,\infty)$, a map $d_{c} \colon \Theta_{v_{0}} \times \Theta_{v_{0}} \to [0,\infty)$ is defined by  
\begin{align*}
  d_{ c }(u,v)
  &:=
  \bigl\|(u-v)( \bullet )\,\exp(-c \, \bullet)\bigr\|_{L^{p}(0,T;X)}
  \\ &\quad +
  \bigl\|(u-v)' (\bullet)\,\exp(-c \, \bullet) \bigr\|_{L^{p}(0,T;X)}
\end{align*} 
for $u,v \in \Theta_{v_{0}}$, where $w'$ denotes the weak derivative of $w$ for each $w \in W^{1,p}(0,T;X)$. 
Then $(\Theta_{v_{0}}, d_{c})$ is a complete metric space for all $c \in (0,\infty)$.  
We now claim that there exist $c_{0} \in (0,\infty)$ and $\kappa_{0} \in (0,1)$ such that
\begin{equation*}
   d_{c_{0}} \bigl( \Lambda (u), \Lambda (v)\bigr) 
  \leq \kappa_{0}\, d_{c_{0}} (u,v) 
\end{equation*}
for all $u,v \in \Theta_{v_{0}}$, that is, $\Lambda$ is a contraction mapping on $(\Theta_{v_{0}}, d_{c_{0}})$. 
Indeed, fix $c \in (0,\infty)$ and take arbitrary $u_{1},u_{2} \in \Theta_{v_{0}}$. 
Since  $\partial_{t} \bigl[k*(u_{1}-u_{2})\bigr](t) = [k*(u_{1}-u_{2})'](t)$, $\partial_{t} \bigl[1* \bigl( F \bigl(\bullet ,u_{1}(\bullet) \bigr) - F \bigl( \bullet ,u_{2}(\bullet) \bigr) \bigr)\bigr](t) = F \bigl(t ,u_{1}(t) \bigr) - F \bigl( t ,u_{2}(t) \bigr)$ and 
\begin{align*} 
  \| u_{1}(t) - u_{2}(t) \|_{X} = \bigl\| [1*(u_{1}-u_{2})'](t)\bigr\|_{X} 
  \leq \bigl[ 1 * \| (u_{1} - u_{2})'\|_{X} \bigr](t) 
\end{align*} 
for~a.e.~$t \in (0,T)$, 
it follows from \eqref{eq:F-Lip} and Minkowski's inequality that 
\begin{align}  
  &d_{ {c}} \bigl( \Lambda (u_{1}), \Lambda (u_{2}) \bigr)
  \notag \\ 
  &\quad =  
  \biggl\| 
    \dfrac{1}{\nu}   
    [k*(u_{1} - u_{2})](\bullet) \exp(-c \, \bullet) 
  \notag \\& \qquad \qquad + 
    \dfrac{1}{\nu} \Bigl[1* \bigl[ F \bigl( \bullet,u_{1}(\bullet) \bigr) - F\bigl(\bullet,u_{2}(\bullet) \bigr) \bigr]
    \Bigr](\bullet) \exp(-c \, \bullet)
    \biggr\|_{L^{p}(0,T;X)} 
  \notag \\ 
  &\qquad + 
  \biggl\| 
    \dfrac{1}{\nu}   
    [k*(u_{1} - u_{2})'](\bullet) \exp(-c \, \bullet) 
  \notag \\& \qquad \qquad + 
    \dfrac{1}{\nu} \Bigl( F \bigl( \bullet,u_{1}(\bullet) \bigr) - F\bigl(\bullet,u_{2}(\bullet) \bigr) \Bigr) \exp(-c \, \bullet)
    \biggr\|_{L^{p}(0,T;X)} 
  \notag \\ 
  &\quad \leq 
  \dfrac{1}{\nu} \bigl\| [k*(u_{1}-u_{2})](\bullet) \exp(-c \, \bullet) \bigr\|_{L^{p}(0,T;X)}
  \notag \\ 
  &\qquad +
  \dfrac{1}{\nu} \bigl\| [k*(u_{1}-u_{2})'](\bullet) \exp(-c \, \bullet) \bigr\|_{L^{p}(0,T;X)}
  \notag \\ 
  &\qquad + 
    \dfrac{C_{0}}{\nu}
    \left\| 
    \bigl[ 1* \| u_{1}-u_{2}\|_{X} \bigr](\bullet) \exp(-c \, \bullet)  
     \right\|_{L^{p}(0,T)} 
  \notag \\ 
  &\qquad + 
    \dfrac{C_{0}}{\nu} 
    \left\| 
    \bigl[ 1* \|(u_{1}-u_{2})'\|_{X}  \bigr] (\bullet) \exp(-c \, \bullet)  
     \right\|_{L^{p}(0,T)}  
  \notag \\ 
  &\quad = 
  \dfrac{1}{\nu} \left\| \bigl( k(\bullet) \exp(-c \, \bullet) \bigr)* \bigl( (u_{1}-u_{2})(\bullet) \exp(-c \, \bullet) \bigr) \right\|_{L^{p}(0,T;X)}
  \notag \\ 
  &\qquad +
  \dfrac{1}{\nu} \bigl\| \bigl( k(\bullet) \exp(-c \, \bullet) \bigr)* \bigl( (u_{1}-u_{2})'(\bullet) \exp(-c \, \bullet) \bigr) \bigr\|_{L^{p}(0,T;X)}
  \notag \\ 
  &\qquad + 
    \dfrac{C_{0}}{\nu}
    \left\| 
    \exp(-c \, \bullet) * \bigl\| (u_{1}-u_{2}) (\bullet) \exp(-c \, \bullet ) \bigr\|_{X} 
     \right\|_{L^{p}(0,T)} 
  \notag \\ 
  &\qquad + 
    \dfrac{C_{0}}{\nu} 
    \left\| 
    \exp(-c \, \bullet)* \bigl\| (u_{1}-u_{2})' (\bullet) \exp(-c \, \bullet )  \bigr\|_{X}  
     \right\|_{L^{p}(0,T)}.  
  \label{eq:dTheta-expansion}
\end{align}  
Here we used that, for every $g \in L^{1}(0,T)$ and $w \in L^{p}(0,T;X)$, the following identity holds:   
\begin{align} 
  &(g*w)(t) \exp(- c t) 
  = 
  \int_{0}^{t} g(t-s) \exp\bigl(- c(t-s)\bigr) \exp(- c s) w(s) \, \textup{d}s 
  \notag \\& \quad =
  \bigl[ \bigl( g(\bullet) \exp(- c \, \bullet) \bigr) * \bigl(w(\bullet) \exp(-c \, \bullet)\bigr) \bigr](t)  \label{eq:conv-exp-identity} 
\end{align} 
for a.e.\ $t \in (0,T)$. 
Applying Young's convolution inequality to \eqref{eq:dTheta-expansion}, we deduce that 
\begin{align*} 
& d_{c} \bigl( \Lambda(u_{1}), \Lambda(u_{2}) \bigr)
  \\ 
  &\quad \leq  
  \dfrac{1}{\nu} \left\| k(\bullet) \exp(-c \, \bullet) \right\|_{L^{1}(0,T)} 
  \left\| (u_{1}-u_{2})(\bullet) \exp(-c \, \bullet) \right\|_{L^{p}(0,T;X)}
  \notag \\ 
  &\qquad +
  \dfrac{1}{\nu} \left\| k(\bullet) \exp(-c \, \bullet) \right\|_{L^{1}(0,T)} 
  \left\| (u_{1}-u_{2})'(\bullet) \exp(-c \, \bullet) \right\|_{L^{p}(0,T;X)}
  \notag \\ 
  &\qquad + 
    \dfrac{C_{0}}{\nu}
    \left\|  \exp(-c \, \bullet) \right\|_{L^{1}(0,T)} 
    \left\| (u_{1}-u_{2}) (\bullet) \exp(-c \, \bullet) \right\|_{L^{p}(0,T;X)} 
  \notag \\ 
  &\qquad + 
    \dfrac{C_{0}}{\nu} 
    \left\|  \exp(-c \, \bullet) \right\|_{L^{1}(0,T)} 
    \left\| (u_{1}-u_{2})'(\bullet) \exp(-c \, \bullet) \right\|_{L^{p}(0,T;X)}  
  \\ 
  &\quad =  
  \left( 
      \dfrac{1}{\nu} \left\| k(\bullet) \exp(-c \, \bullet) \right\|_{L^{1}(0,T)} 
      + 
      \dfrac{C_{0}}{\nu} \left\|  \exp(-c \, \bullet) \right\|_{L^{1}(0,T)}  
  \right) 
  d_{c}(u_{1},u_{2}). 
\end{align*}
We can choose $c_{0} \in (0,\infty)$ sufficiently large such that  
\begin{equation*}
  \kappa_{0}
  :=  
  \dfrac{1}{\nu} \|k(\bullet) \exp(-c_{0} \, \bullet)\|_{L^{1}(0,T)} 
  + \dfrac{C_{0}}{\nu} \|\exp(-c_{0} \, \bullet)\|_{L^{1}(0,T)} \in [0,1). 
\end{equation*}
Thus, for all $u_{1},u_{2} \in \Theta_{v_{0}}$, we obtain 
\begin{equation*}
  d_{c_{0}}\bigl(\Lambda (u_{1}), \Lambda(u_{2})\bigr)  
  \leq \kappa_{0}\, d_{c_{0}} (u_{1},u_{2}),
\end{equation*}
which shows that $\Lambda$ is a contraction mapping on $(\Theta_{v_{0}}, d_{c_{0}})$. 
By virtue of Banach's fixed point theorem, there exists a unique function  
$v \in \Theta_{v_{0}}$ such that $\Lambda(v)=v$.   
This completes the proof. 
\end{proof}


\subsection{A priori estimate}

We next establish  a~priori estimates.  For each $\nu \in (0,1)$, let 
$u_{\nu} \in W^{1,2}(0,T;H)$  be such that  $u_{\nu}(0)=u_{0}$ \textup{(}i.e., $u_{\nu}-u_{0}\in D(\mathcal{A})=D(\nu\mathcal{A}+\mathcal{B})$,  see  \cref{subsec:time-nonlocal-ops}\textup{)}  and  
\begin{equation}\label{eq:star-nu}
  (\nu\mathcal{A}+\mathcal{B})\bigl(u_{\nu}-u_{0}\bigr) + \partial \Phi_{\nu}(u_{\nu}) = f 
  \quad \textup{in } L^{2}(0,T;H).
\end{equation} 
Testing \eqref{eq:star-nu} by $\mathcal{A}(u_{\nu}-u_{0})=\partial_{t}u_{\nu} = \partial_{t} (u_{\nu}-u_{0})$, we get 
\begin{align*}
  &\nu \bigl\|\mathcal{A}(u_{\nu}-u_{0})(s)\bigr\|_{H}^{2}
   + \bigl(\mathcal{B}(u_{\nu}-u_{0})(s),\, \mathcal{A}(u_{\nu}-u_{0})(s) \bigr)_{H}
  \\ &\qquad +
  \bigl(\partial \varphi^{s}_{\nu}(u_{\nu}(s)),\, \partial_{s}u_{\nu}(s)\bigr)_{H}
  \\
  &\quad = \bigl(f(s),\, \partial_{s}(u_{\nu}-u_{0})(s)\bigr)_{H}
  \\
  &\quad = \partial_{s}\bigl(f(s),\, u_{\nu}(s)-u_{0}\bigr)_{H}
          - \bigl(\partial_{s}f(s),\, u_{\nu}(s)-u_{0}\bigr)_{H} 
\end{align*}
for a.e.\ $s \in (0,T)$. 
Integrating both sides over $(0,t)$ and using \eqref{E:AB-energy-estimate}, \eqref{eq:star-nu}, \cref{prop:MY,prop:time-subdiff-energy} together with Young's inequality,  
there exist nonnegative functions $\eta_{1},\eta_{2} \in L^{1}(0,T)$ independent of $\nu$ such that
\begin{align*}
  & 
  \int_{0}^{t} \nu \bigl\|\mathcal{A}(u_{\nu}-u_{0})(s)\bigr\|_{H}^{2}\,\textup{d}s 
  + \frac{1}{2}\bigl[\ell * \|\mathcal{B}(u_{\nu}-u_{0})\|_{H}^{2}\bigr](t)
  + \varphi_{\nu}^{t}\bigl(u_{\nu}(t)\bigr)
  \\
  &\quad \leq 
  \varphi_{\nu}^{0}(u_{0})
  + \int_{0}^{t} \bigl\|\partial\varphi_{\nu}^{s}(u_{\nu}(s))\bigr\|_{H}^{2}\,\textup{d}s
  + \int_{0}^{t} \eta_{1}(s)\bigl|\varphi_{\nu}^{s}(u_{\nu}(s))\bigr|\,\textup{d}s
  + \int_{0}^{t} \eta_{2}(s)\,\textup{d}s
  \\
  &\quad \quad
  +  \bigl(f(t),\,u_{\nu}(t)-u_{0}\bigr)_{H} 
  -  \int_{0}^{t} \bigl(\partial_{s}f(s),\,u_{\nu}(s)-u_{0}\bigr)_{H}\,\textup{d}s
  \\
  &\quad = 
  \varphi_{\nu}^{0}(u_{0})
  + \int_{0}^{t} \eta_{2}(s)\,\textup{d}s
  + \bigl(f(t),\,u_{\nu}(t)-u_{0}\bigr)_{H}
  \\
  &\quad \quad
  + \int_{0}^{t} \bigl\|-(\nu\mathcal{A}+\mathcal{B})(u_{\nu}-u_{0})(s) + f(s)\bigr\|_{H}^{2}\,\textup{d}s
  + \int_{0}^{t} \eta_{1}(s)\bigl|\varphi_{\nu}^{s}(u_{\nu}(s))\bigr|\,\textup{d}s
  \\
  &\quad \quad
  - \int_{0}^{t} \bigl(\partial_{s}f(s),\,u_{\nu}(s)-u_{0}\bigr)_{H}\,\textup{d}s
  \\
  &\quad \leq 
  \varphi^{0}(u_{0})
  + \int_{0}^{t} \eta_{2}(s)\,\textup{d}s
  + \|f\|_{L^{\infty}(0,T;H)} \,\|u_{\nu}(t)-u_{0}\|_{H}
  \\
  &\quad \quad
  + 4 \int_{0}^{t} \|f(s)\|_{H}^{2}\,\textup{d}s
  + 4 \nu^{2} \int_{0}^{t} \bigl\|\mathcal{A}(u_{\nu}-u_{0})(s)\bigr\|_{H}^{2}\,\textup{d}s
  \\
  &\quad \quad
  + 4 \int_{0}^{t} \bigl\|\mathcal{B}(u_{\nu}-u_{0})(s)\bigr\|_{H}^{2}\,\textup{d}s
  + \int_{0}^{t} \eta_{1}(s)\bigl|\varphi_{\nu}^{s}(u_{\nu}(s))\bigr|\,\textup{d}s
  \\
  &\quad \quad
  + \frac{1}{2}\int_{0}^{t} \|\partial_{s}f(s)\|_{H}^{2}\,\textup{d}s
  + \frac{1}{2}\int_{0}^{t} \|u_{\nu}(s)-u_{0}\|_{H}^{2}\,\textup{d}s
\end{align*}
for a.e.\ $t \in (0,T)$. 
Here we used the following elementary inequality: 
\begin{align*} 
\|a+b+c \|_{H}^{2} 
&\leq 
\bigl(\|a\|_{H} + \|b\|_{H} + \|c\|_{H}\bigr)^{2} 
\leq    
4\bigl(\|a\|_{H}^{2} + \|b\|_{H}^{2} + \|c\|_{H}^{2}\bigr)
\end{align*} 
for all $a,b,c \in H$. 
Consequently, there exists a constant $C_{0} \in [0,\infty)$  independent of $\nu$ such that
\begin{align}
  & 
  (\nu-4\nu^{2}) \int_{0}^{t} \bigl\|\mathcal{A}(u_{\nu}-u_{0})(s)\bigr\|_{H}^{2}\,\textup{d}s
  + \frac{1}{2}\bigl[\ell * \|\mathcal{B}(u_{\nu}-u_{0})\|_{H}^{2}\bigr](t)
  \notag \\
  &\quad \quad
  + \varphi_{\nu}^{t}\bigl(u_{\nu}(t)\bigr)
  - \|f\|_{L^{\infty}(0,T;H)}\,\|u_{\nu}(t)-u_{0}\|_{H}
  \notag \\
  &\quad \leq 
  C_{0}
  + 4 \int_{0}^{t} \bigl\|\mathcal{B}(u_{\nu}-u_{0})(s)\bigr\|_{H}^{2}\,\textup{d}s
  + \int_{0}^{t} \eta_{1}(s)\bigl|\varphi_{\nu}^{s}(u_{\nu}(s))\bigr|\,\textup{d}s
  \notag \\
  &\qquad 
  + \frac{1}{2}\int_{0}^{t} \|u_{\nu}(s)-u_{0}\|_{H}^{2}\,\textup{d}s  \label{eq:proof-step1}
\end{align}
for a.e. $t\in(0,T)$. 
 Since $(k,\ell) \in PC$ and  $u_{\nu}-u_{0}\in D(\mathcal{A})\subset D(\mathcal{B})$, 
 it follows from H\"{o}lder's inequality that   
\begin{align}
  & 
  \|u_{\nu}(t)-u_{0}\|_{H}^{2} 
  = 
  \bigl\|[\ell* \mathcal{B}(u_{\nu}-u_{0})](t) \bigr\|_{H}^{2} 
  \notag \\
  &= \left\|\int_{0}^{t} \ell^{\frac{1}{2}}(t-s)\,\ell^{\frac{1}{2}}(t-s)\,
      \mathcal{B}(u_{\nu}-u_{0})(s)\,\mathrm{d}s\right\|_{H}^{2} 
  \notag \\[0.25em]
  &\leq 
  \left(\int_{0}^{t}\ell(t-s)\,\mathrm{d}s\right)
        \left(\int_{0}^{t}\ell(t-s)\,
        \bigl\|\mathcal{B}(u_{\nu}-u_{0})(s)\bigr\|_{H}^{2}\,\mathrm{d}s\right) \notag \\[0.25em]
  &= 
  \left(\int_{0}^{t}\ell(s)\,\mathrm{d}s\right)
        \bigl[\ell * \|\mathcal{B}(u_{\nu}-u_{0})\|_{H}^{2}\bigr](t) \notag \\[0.25em]
  &\leq 
  \bigl(\|\ell\|_{L^{1}(0,T)}+1\bigr)\,
  \bigl[\ell * \|\mathcal{B}(u_{\nu}-u_{0})\|_{H}^{2}\bigr](t) \label{eq:u-nu-u0-energy-bound}
\end{align}
for a.e. $t \in (0,T)$. 
Here we used the fact that, for each $v \in D(\mathcal{B})$, the following identity holds: 
\begin{align}
  v(t)
  &= \partial_t\!\left(\int_0^t v(s)\,\mathrm{d}s\right)
   = \partial_t (1 * v)(t)  \notag \\
  &= \partial_t\! \bigl[ \bigl( ( \ell * k ) * v \bigr)\bigr](t)
   = \bigl[\ell * \mathcal{B}(v)\bigr](t) \label{eq:B-repr}
\end{align} 
for a.e. $t \in (0,T)$. 
 From \eqref{eq:td-subdiff-varphi-lambda-bound}, \eqref{eq:u-nu-u0-energy-bound} 
 and Young's inequality, for every $\varepsilon \in (0,1)$, we observe that   
\begin{align*}
  &\frac{1}{2}\bigl[\ell * \|\mathcal{B}(u_{\nu}-u_{0})\|_{H}^{2}\bigr](t)
  + \varphi_{\nu}^{t}\bigl(u_{\nu}(t)\bigr)
  - \|f\|_{L^{\infty}(0,T;H)}\,\|u_{\nu}(t)-u_{0}\|_{H} 
  \notag \\ 
  &\quad = 
  \frac{1}{4}\bigl[\ell * \|\mathcal{B}(u_{\nu}-u_{0})\|_{H}^{2}\bigr](t)
  + \frac{1}{4}\bigl[\ell * \|\mathcal{B}(u_{\nu}-u_{0})\|_{H}^{2}\bigr](t) 
  \notag \\ 
  &\qquad + 
  \varphi_{\nu}^{t}\bigl(u_{\nu}(t)\bigr)
  - \|f\|_{L^{\infty}(0,T;H)}\,\|u_{\nu}(t)-u_{0}\|_{H} 
  \notag \\[0.25em]
  &\quad \geq 
  \frac{1}{4}\bigl[\ell * \|\mathcal{B}(u_{\nu}-u_{0})\|_{H}^{2}\bigr](t)
  + \frac{1}{4}\bigl(1+\|\ell\|_{L^{1}(0,T)}\bigr)^{-1}\|u_{\nu}(t)-u_{0}\|_{H}^{2}
  \notag \\[0.25em]
  &\qquad 
  + \bigl|\varphi_{\nu}^{t}(u_{\nu}(t))\bigr|
  -  D  \|u_{\nu}(t)\|_{H} -  D    
  - \|f\|_{L^{\infty}(0,T;H)}\,\|u_{\nu}(t)-u_{0}\|_{H}
  \notag \\ 
  &\quad \geq 
  \frac{1}{4}\bigl[\ell * \|\mathcal{B}(u_{\nu}-u_{0})\|_{H}^{2}\bigr](t)
  + \frac{1}{4}\bigl(1+\|\ell\|_{L^{1}(0,T)}\bigr)^{-1}\|u_{\nu}(t)-u_{0}\|_{H}^{2}
  \notag \\[0.25em]
  &\qquad 
  + \bigl|\varphi_{\nu}^{t}(u_{\nu}(t))\bigr|
  -  D  \bigl( \|u_{\nu}(t)-u_{0}\|_{H} + \|u_{0}\|_{H}\bigr) -  D  
  \notag \\[0.25em]
  &\qquad 
  -  \dfrac{\varepsilon}{2}  \|u_{\nu}(t)-u_{0}\|_{H}^{2} -  \dfrac{1}{2 \varepsilon}   \|f\|_{L^{\infty}(0,T;H)}^{2}
  \notag \\[0.25em]
  &\quad \geq 
  \frac{1}{4} \bigl[\ell * \|\mathcal{B}(u_{\nu}-u_{0})\|_{H}^{2}\bigr](t)
  +  \frac{1}{4}  \bigl(1+\|\ell\|_{L^{1}(0,T)}\bigr)^{-1}\|u_{\nu}(t)-u_{0}\|_{H}^{2}
  \notag \\[0.25em]
  &\quad \quad 
  + \bigl|\varphi_{\nu}^{t}(u_{\nu}(t))\bigr|
  -  \dfrac{\varepsilon}{2}   \|u_{\nu}(t)-u_{0}\|_{H}^{2}  - \dfrac{1}{2 \varepsilon} D^{2}  -  D  \| u_{0}\|_{H}  -  D  
  \notag \\[0.25em]
  &\qquad - 
    \dfrac{\varepsilon}{2}  \|u_{\nu}(t)-u_{0}\|_{H}^{2} 
  - \dfrac{1}{2 \varepsilon}  \|f\|_{L^{\infty}(0,T;H)}^{2}  
\end{align*}
for~a.e.~$t\in(0,T)$,  where $D$ is a constant satisfying \eqref{eq:td-subdiff-varphi-lambda-bound}.  
Hence there exists a constant $\varepsilon_{0} \in (0,1/4)$ independent of $\nu$ such that 
\begin{align} 
  &\frac{1}{2}\bigl[\ell * \|\mathcal{B}(u_{\nu}-u_{0})\|_{H}^{2}\bigr](t)
  + \varphi_{\nu}^{t}\bigl(u_{\nu}(t)\bigr)
  - \|f\|_{L^{\infty}(0,T;H)}\,\|u_{\nu}(t)-u_{0}\|_{H} 
  \notag \\ & \quad \geq 
  \dfrac{1}{4}       
  \bigl[\ell * \|\mathcal{B}(u_{\nu}-u_{0})\|_{H}^{2}\bigr](t)
  + 
  \varepsilon_{0}
  \|u_{\nu}(t)-u_{0}\|_{H}^{2}
  \notag \\ & \qquad  + 
  \bigl|\varphi_{\nu}^{t}(u_{\nu}(t))\bigr| 
  - \dfrac{1}{\varepsilon_{0}} 
  \label{eq:energy-lower-bound} 
\end{align}
for~a.e.~$t \in (0,T)$. 
Using \eqref{eq:proof-step1} and \eqref{eq:energy-lower-bound} together with $k * \ell \equiv 1$ on $(0,T)$, 
there exist  a constant  $C_{3} \in (0,\infty)$ and a nonnegative function $\eta_{3}\in L^{1}(0,T)$ independent of $\nu$ such that
\begin{align}
  & 
  (\nu-4\nu^{2})\int_{0}^{t}
    \bigl\|\mathcal{A}(u_{\nu}-u_{0})(s)\bigr\|_{H}^{2}\,\textup{d}s
    +  \dfrac{1}{C_{3}}  F_{\nu}(t)
  \notag \\
  &\quad  \leq 
  C_{3} + 
  C_{3} \int_{0}^{t} \bigl\|\mathcal{B}(u_{\nu}-u_{0})(s)\bigr\|_{H}^{2}\,\textup{d}s 
  + 
  \int_{0}^{t}\eta_{3}(s)\,F_{\nu}(s)\,\textup{d}s  
  \notag \\ 
  &\quad = 
  C_{3} + 
  C_{3} \bigl[ (k * \ell) * \|\mathcal{B}(u_{\nu}-u_{0}) \|_{H}^{2}\bigr](t) 
  + 
  \int_{0}^{t}\eta_{3}(s)\,F_{\nu}(s)\,\textup{d}s  
  \notag \\ 
  &\quad \leq 
    C_{3}
    + C_{3} \bigl(k * F_{\nu}\bigr)(t)
    + \int_{0}^{t}\eta_{3}(s)\,F_{\nu}(s)\,\textup{d}s
  \label{eq:energy-ineq-Fnu}
\end{align}
for a.e.\ $t\in(0,T)$, where 
\begin{align*} 
  F_{\nu}(  \bullet )
  := \|u_{\nu}(  \bullet  )-u_{0}\|_{H}^{2}
     + \bigl|\varphi^{ \bullet }_{\nu}(u_{\nu}( \bullet ))\bigr|
     + \bigl[\ell * \|\mathcal{B}(u_{\nu}-u_{0})\|_{H}^{2}\bigr]( \bullet ). 
\end{align*}
From the embedding $u_{\nu} \in W^{1,2}(0,T;H) \subset L^{\infty}(0,T;H)$ 
together with \eqref{eq:td-subdiff-varphi-lambda-bound}, \eqref{eq:AB-Holder} and \cref{prop:JY,prop:MY,prop:time-subdiff}, we can deduce that $F_{\nu} \in L^{\infty}(0,T)$. 
Since $\nu-4\nu^{2} > 0 $ for all $\nu \in (0,1/4)$, it follows from \cref{prop:ineq} that
\begin{align}  
  \esssup_{\nu \in (0,1/4)} \|F_{\nu}\|_{L^{\infty}(0,T)} < \infty. \label{eq:Fnu-uniform-bound}
\end{align}
In particular, recalling  $k * \ell \equiv 1$ on $(0,T)$ and  using  Young's convolution  inequality, we obtain 
\begin{align}
  &\esssup_{\nu \in (0,1/4)}
    \|\mathcal{B}(u_{\nu}-u_{0})\|_{L^{2}(0,T;H)}^{2}
  =
  \esssup_{\nu \in (0,1/4)}
    \sup_{t \in (0,T)} \int_{0}^{t} \|\mathcal{B}(u_{\nu}-u_{0})(s)\|_{H}^{2} \, \textup{d}s 
  \notag \\ 
  &\quad =
  \esssup_{\nu \in (0,1/4)}
    \|\,(k * \ell) * \|\mathcal{B}(u_{\nu}-u_{0})\|_{H}^{2}\,\|_{L^{\infty}(0,T)}
  \notag \\
  &\quad \leq \|k\|_{L^{1}(0,T)}
    \,\esssup_{\nu \in (0,1/4)}
    \|\ell * \|\mathcal{B}(u_{\nu}-u_{0})\|_{H}^{2}\|_{L^{\infty}(0,T)} < \infty.  \label{eq:B-uniform-L2-bound}
\end{align} 
Moreover, from \eqref{eq:energy-ineq-Fnu}, \eqref{eq:Fnu-uniform-bound}  
and the fact that 
$\nu - 4\nu^{2} \geq (1/2) \nu$ for all $\nu \in (0,1/8)$, we also deduce that 
\begin{align}
  &\esssup_{\nu \in (0,1/8)} 
    \nu^{-1} \|\nu \mathcal{A}(u_{\nu}-u_{0})\|_{L^{2}(0,T;H)}^{2}  
  \notag \\
  &\quad =
  \esssup_{\nu \in (0,1/8)} 
    \sup_{t \in (0,T)} \int_{0}^{t} \nu \| [\mathcal{A}(u_{\nu}-u_{0})](s) \|_{H}^{2} \, \textup{d}s
  < \infty. 
  \label{eq:A-uni-estimate}
\end{align}
Combining  \eqref{eq:star-nu},  
\eqref{eq:B-uniform-L2-bound} and \eqref{eq:A-uni-estimate}, 
 we can deduce that    
\begin{align}
  \esssup_{\nu \in (0,1/8)} 
   \|\partial \Phi_{\nu}(u_{\nu})\|_{L^{2}(0,T;H)}^{2}  
  &=
  \esssup_{\nu \in (0,1/8)} 
  \|\partial \varphi^{ \bullet }_{\nu}(u_{\nu}( \bullet ))\|_{L^{2}(0,T;H)}^{2}  
  < \infty.
  \label{eq:subdiff-uniform-bound}
\end{align}

\subsection{Convergence of approximate solutions} \label{subsec:conv-approx}
We first prove that
\begin{align*}
  \lim_{\mu,\nu\to 0+}\|u_{\mu}-u_{\nu}\|_{L^{2}(0,T;H)}=0,
\end{align*}
that is, the family $(u_{\nu})_{\nu\in(0,1/8)}$ forms a Cauchy sequence in $L^{2}(0,T;H)$.  
Fix arbitrary $\mu,\nu\in(0,1/8)$, and let $u_{\mu}$ and $u_{\nu}$ denote the solutions to  \eqref{eq:star-nu}  corresponding to the parameters $\mu$ and $\nu$, respectively.  
From  \eqref{eq:star-nu}  and $u_{\mu}-u_{\nu}\in D(\mathcal{A})\subset D(\mathcal{B})$, we get 
\begin{align*}
  \mathcal{B}(u_{\mu}-u_{\nu})(t) 
  &=
  -\,\mu\,\mathcal{A}(u_{\mu}-u_{0})(t)
  +\,\nu\,\mathcal{A}(u_{\nu}-u_{0})(t)
  \\ &\quad
  -\bigl(\partial \varphi^{t}_{\mu}(u_{\mu}(t))-\partial \varphi^{t}_{\nu}(u_{\nu}(t))\bigr)
\end{align*}
for a.e.\ $t \in (0,T)$.  
Multiplying both sides by $u_{\mu}-u_{\nu}\in L^{2}(0,T;H)$ and integrating  it  over $(0,t)$, we deduce from \cref{P:frac_chain} and H\"older's inequality that 
\begin{align}
  & \dfrac{1}{2}\bigl[k*\|u_{\mu}-u_{\nu}\|_{H}^{2}\bigr](t)  
  \notag
  \\
  &\quad \leq 
  \int_{0}^{t} \bigl( \mathcal{B}(u_{\mu}-u_{\nu})(\tau) ,  u_{\mu}(\tau) - u_{\nu}(\tau) \bigr)_{H}  \, \textup{d}\tau 
  \notag
  \\ 
  &\quad =
  \int_{0}^{t} \bigl( -\mu \mathcal{A}(u_{\mu}-u_{0})(\tau) + \nu \mathcal{A}(u_{\nu}-u_{0})(\tau),\, u_{\mu}(\tau) -u_{\nu}(\tau) \bigr)_{H} \, \textup{d}\tau
  \notag
  \\ 
  &\qquad 
  - \int_{0}^{t}\bigl(\partial\varphi^{\tau}_{\mu}(u_{\mu}(\tau))-\partial\varphi^{\tau}_{\nu}(u_{\nu}(\tau)),\,u_{\mu}(\tau)-u_{\nu}(\tau)\bigr)_{H}\,\textup{d}\tau
  \notag
  \\ 
  &\quad \leq 
  \|\mu\mathcal{A}(u_{\mu}-u_{0})\|_{L^{2}(0,T;H)} \, \|u_{\mu}-u_{\nu}\|_{L^{2}(0,T;H)} 
  \notag
  \\ 
  &\qquad 
  + \|\nu \mathcal{A}(u_{\nu}-u_{0})\|_{L^{2}(0,T;H)} \, \|u_{\mu}-u_{\nu}\|_{L^{2}(0,T;H)}
  \notag
  \\ 
  &\qquad 
  + \dfrac{\mu+\nu}{4}\Bigl(\|\partial \Phi_{\mu}(u_{\mu}) \|_{L^{2}(0,T;H)}^{2}  
  + \| \partial \Phi_{\nu}(u_{\nu}) \|_{L^{2}(0,T;H)}^{2}\Bigr)
  \label{eq:mu-nu-energy-ineq}
\end{align}
for~a.e.~$t \in (0,T)$, 
where we used K\={o}mura's  trick  (see, e.g., \cite[p.~56]{B-Brezis-1973}, \cite[p.~174]{B-Showalter-1997}), 
\begin{align*}
  &\bigl(\partial\varphi^{t}_{\mu}(a)-\partial\varphi^{t}_{\nu}(b),\, a-b \bigr)_{H}
  \\ &\quad \geq 
  -\dfrac{\mu+\nu}{4}\Bigl(\|\partial\varphi^{t}_{\mu}(a)\|_{H}^{2}
  + \|\partial\varphi^{t}_{\nu}(b)\|_{H}^{2}\Bigr)
\end{align*}
for all $a,b \in H$ and all $t\in(0,T)$. 
Since $k * \ell \equiv 1$ on $(0,T)$ and $(1 * \ell)(t) \leq \|\ell\|_{L^{1}(0,T)}$ for a.e.\ $t \in (0,T)$, convolving both sides of \eqref{eq:mu-nu-energy-ineq} with $\ell$ and applying Young's inequality, 
we obtain  
\begin{align*} 
&\dfrac{1}{2}\int_{0}^{t} \| u_{\mu}(\tau) - u_{\nu}(\tau) \|_{H}^{2} \, \textup{d}\tau
= 
\dfrac{1}{2} [  (\ell * k)  *  \| u_{\mu} - u_{\nu} \|_{H}^{2}] (t) 
\\ 
& \quad \leq
\|\mu \mathcal{A}(u_{\mu}-u_{0})\|_{L^{2}(0,T;H)} 
\, \|u_{\mu}-u_{\nu}\|_{L^{2}(0,T;H)} \, \| \ell \|_{L^{1}(0,T)}
\\ & \qquad + 
\|\nu \mathcal{A}(u_{\nu}-u_{0})\|_{L^{2}(0,T;H)} 
\, \|u_{\mu}-u_{\nu}\|_{L^{2}(0,T;H)} \, \| \ell \|_{L^{1}(0,T)}
\\ & \qquad + 
\left( 
\dfrac{\mu+\nu}{4} \|\partial \Phi_{\mu}(u_{\mu}) \|_{L^{2}(0,T;H)}^{2}  
+ \dfrac{\mu+\nu}{4} \|\partial \Phi_{\nu}(u_{\nu}) \|_{L^{2}(0,T;H)}^{2}  
\right)
\| \ell \|_{L^{1}(0,T)}
\\ 
& \quad \leq
2 \bigl( 
\|\mu \mathcal{A}(u_{\mu}-u_{0})\|_{L^{2}(0,T;H)} 
\, \|\ell\|_{L^{1}(0,T)} 
\bigr)^{2} 
+ 
\dfrac{1}{8} \|u_{\mu}-u_{\nu}\|_{L^{2}(0,T;H)}^{2} 
\\ & \qquad + 
2 \bigl( 
\|\nu \mathcal{A}(u_{\nu}-u_{0})\|_{L^{2}(0,T;H)} 
\, \|\ell\|_{L^{1}(0,T)} 
\bigr)^{2}
+ 
\dfrac{1}{8} \|u_{\mu}-u_{\nu}\|_{L^{2}(0,T;H)}^{2} 
\\ & \qquad + 
\left( 
\dfrac{\mu+\nu}{4} \|\partial \Phi_{\mu}(u_{\mu}) \|_{L^{2}(0,T;H)}^{2}  
+ \dfrac{\mu+\nu}{4} \|\partial \Phi_{\nu}(u_{\nu}) \|_{L^{2}(0,T;H)}^{2}  
\right)
\|\ell\|_{L^{1}(0,T)} 
\end{align*}
for a.e.\ $t \in (0,T)$. 
Taking the supremum  of both sides  over $t \in (0,T)$, one can take a constant $c_{0} \in [0,\infty)$ independent of $\mu$ and $\nu$ such that 
\begin{align*}
  &\|u_{\mu}-u_{\nu}\|_{L^{2}(0,T;H)}^{2} 
  \\ &\quad \leq 
  c_{0} \Bigl(\|\mu\mathcal{A}(u_{\mu}-u_{0})\|_{L^{2}(0,T;H)}^{2}
  + \|\nu\mathcal{A}(u_{\nu}-u_{0})\|_{L^{2}(0,T;H)}^{2}\Bigr)
  \\
  &\qquad + 
  c_{0} (\mu+\nu)\Bigl(\|\partial\Phi_{\mu}(u_{\mu})\|_{L^{2}(0,T;H)}^{2}
  + \|\partial\Phi_{\nu}(u_{\nu})\|_{L^{2}(0,T;H)}^{2}\Bigr).
\end{align*}
From \eqref{eq:A-uni-estimate} and \eqref{eq:subdiff-uniform-bound}, we obtain $\lim_{\mu,\nu\to 0_{+}}\|u_{\mu}-u_{\nu}\|_{L^{2}(0,T;H)}=0$.   
Hence there exists a function $u\in L^{2}(0,T;H)$ such that 
\begin{align}
  u_{\nu}\to u \quad \textup{in $L^{2}(0,T;H)$ as $\nu\to 0_{+}$}. 
  \label{eq:u-nu-conv-L2}
\end{align}

Furthermore, combining {\eqref{eq:subdiff-uniform-bound} } with  \eqref{eq:u-nu-conv-L2}  and using the identity 
$u_{\nu}( \bullet )  - J_{\nu}^{ \bullet }(u_{\nu}( \bullet ))=\nu\,\partial\varphi_{\nu}^{ \bullet }(u_{\nu}( \bullet ))$ 
(which follows from the definition of the Yosida approximation), 
we can deduce that  
\begin{align} 
  \textup{$J_{\nu}^{ \bullet }(u_{\nu}( \bullet )) \to u( \bullet )$ \quad in $L^{2}(0,T;H)$ as $\nu\to 0_{+}$.}
  \label{eq:Yosida-conv}
\end{align}  
By virtue of \eqref{eq:B-uniform-L2-bound}, \eqref{eq:subdiff-uniform-bound} and the reflexivity of  $L^{2}(0,T;H)$,  
there exist functions $\zeta_{1}, \zeta_{2} \in L^{2}(0,T;H)$ and a subsequence (not relabeled) such that  
\begin{align}
  \mathcal{B}(u_{\nu}-u_{0}) &\rightharpoonup \zeta_{1} 
  \quad \text{weakly in } L^{2}(0,T;H) 
  \quad \text{as } \nu \to 0_{+}, 
  \label{eq:weak-conv-B} \\
  \partial\Phi_{\nu}(u_{\nu}) &\rightharpoonup \zeta_{2} 
  \quad \text{weakly in } L^{2}(0,T;H) 
  \quad \text{as } \nu \to 0_{+}. 
  \label{eq:weak-conv-B-Phi}
\end{align} 
Passing to the limit in \eqref{eq:star-nu} and using \eqref{eq:A-uni-estimate}, \eqref{eq:weak-conv-B} and \eqref{eq:weak-conv-B-Phi}, we obtain  
\begin{align*}
  \zeta_{1}(t) + \zeta_{2}(t) = f(t) 
  \quad \text{in } H 
  \quad \text{for a.e.~} t \in (0,T).
\end{align*} 
Since $\mathcal{B}$ is maximal monotone and demiclosed (see \cref{subsec:subdifferential}), 
it follows from \eqref{eq:u-nu-conv-L2} and  \eqref{eq:weak-conv-B}  that  
$u - u_{0} \in D(\mathcal{B})$ and $\zeta_{1} = \mathcal{B}(u - u_{0})$.  
Furthermore,  by virtue of  \cref{prop:JY,prop:Phi-t-properties}, we have 
$\partial\varphi^{ \bullet }_{\nu}(u_{\nu}( \bullet )) \in \partial\Phi\bigl(J_{\nu}^{ \bullet }(u_{\nu}( \bullet ))\bigr)$.  
Since $\partial\Phi$ is also maximal monotone and demiclosed (see \cref{subsec:subdifferential}),  
we can deduce from \eqref{eq:Yosida-conv} and \eqref{eq:weak-conv-B-Phi} that  
$\zeta_{2} \in \partial\Phi(u)$.  
From \cref{prop:Phi-t-properties}, we obtain $\zeta_{2}(t) \in \partial\varphi^{t}(u(t))$ for a.e.\ $t \in (0,T)$.  

Therefore, $u$ is a strong solution to \eqref{E:main-equation1}.  
Moreover, due to \cref{thm:wellposedness}, the strong solution is unique.  
 Since $\bigl|\varphi^{t} \bigl( J_{\nu}^{t}(u_{\nu}(t)) \bigr)\bigr| \leq |\varphi_{\nu}^{t}(u_{\nu}(t))|$ for~a.e.~$t \in (0,T)$ (see \cref{prop:MY}), it follows from \eqref{eq:Fnu-uniform-bound} and \eqref{eq:Yosida-conv} that \eqref{eq:energy-estimate-strong-Sobolev} holds.         
Furthermore, by virtue of \eqref{eq:B-repr} and $\ell* \|\partial_{t}[k*(u-u_{0})]\|_{H}^{2} \in L^{\infty}(0,T)$,  $u$ belongs to $C([0,T];H)$ and $u(0) = u_{0}$ (see \cite[Lemma~4.3]{P-AkagiNakajima-2025}).  
This completes the proof.  \qed

\section{Proof of \cref{thm:main2} }  \label{Sec: proof of existence of strong solutions for Lebesgue spaces}  
In this section, we provide a  proof of \cref{thm:main2}.    
We define $\mathcal{B}$ as in \cref{subsec:time-nonlocal-ops}, 
and let $\Phi$ denote the proper lower-semicontinuous convex functional on $L^{2}(0,T;H)$ defined as in \cref{prop:Phi-t-properties}. 

Let $(f_{n})$ be a sequence in $W^{1,2}(0,T;H)$ such that $f_{n} \to f$ strongly in $L^{2}(0,T;H)$.  
Then,  from  \cref{thm:main1}, there exist functions $u_{n}, \xi_{n} \in L^{2}(0,T;H)$ such that 
$\varphi^{\bullet}(u_{n}( \bullet )) \in L^{\infty}(0,T)$ and $(u_{n},\xi_{n}) \in \eqref{E:main-equation1}_{u_{0},f_{n}}$,   
that is, $k*(u_{n}-u_{0}) \in W^{1,2}(0,T;H)$ with $[k*(u_{n}-u_{0})](0) = 0$ (i.e., $u_{n}-u_{0} \in D(\mathcal{B})$),  
$\xi_{n}(t) \in \partial \varphi^{t}(u_{n}(t))$ for~a.e.~$t \in (0,T)$ (equivalently, $\xi_{n} \in \partial \Phi(u_{n})$), and moreover, $u_{n}$ and $\xi_{n}$ satisfy the equation, 
\begin{equation}
  \mathcal{B}(u_{n}-u_{0})(s) + \xi_{n}(s) = f_{n}(s)
  \quad \text{for a.e.\ } s \in (0,T).
  \label{eq:fn-approx-eq}
\end{equation}
 We first prove \cref{thm:main2} under the additional assumption,    
\begin{equation}\label{eq:nonneg-varphi}
  \varphi^{t}(z) \geq 0
  \quad \textup{for all $t \in [0,T]$ and all $z \in H$.}
\end{equation} 

Multiplying both sides of \eqref{eq:fn-approx-eq} by $\xi_{n} \in L^{2}(0,T;H)$, integrating it over $(0,t)$, and employing the fractional chain-rule formula obtained in \cref{lem:chainrule2}, for every $\varepsilon \in (0,1)$, 
we  deduce  from \eqref{eq:nonneg-varphi} and Young's inequality that 
\begin{align*} 
  &
  \bigl[k*\varphi^{\bullet}(u_{n}( \bullet ))\bigr](t) - \varphi^{0}(u_{0}) \int_{0}^{t} k(s)\,\textup{d}s  
  - \varepsilon  C  
    \int_{0}^{t}\!\|\xi_{n}(s)\|_{H}^{2}\,\textup{d}s  \notag \\
  &\qquad
  -  \dfrac{ C }{\varepsilon}  \left[ T \bigl( 1 + \varphi^{0}(u_{0}) \bigr) + \int_{0}^{t} \varphi^{s}(u_{n}(s))\,\textup{d}s \right]   
  + \int_{0}^{t}\!\|\xi_{n}(s)\|_{H}^{2}\,\textup{d}s  \notag \\
  &\quad  \leq 
  \int_{0}^{t} \Bigl( \mathcal{B}(u_{n}-u_{0})(s) + \xi_{n}(s),\, \xi_{n}(s) \Bigr)_{H}\,\textup{d}s    \notag \\
  &\quad  = 
  \int_{0}^{t} \bigl( f_{n}(s),\, \xi_{n}(s) \bigr)_{H}\,\textup{d}s 
  \leq 
  \dfrac{1}{2\varepsilon} \int_{0}^{t} \| f_{n}(s)\|_{H}^{2}\,\textup{d}s 
  + 
  \dfrac{\varepsilon}{2} \int_{0}^{t} \|\xi_{n}(s)\|_{H}^{2}\,\textup{d}s  \notag \\
  &\quad  \leq 
  \dfrac{1}{2\varepsilon} \sup_{n \in \mathbb{N}} \| f_{n}\|_{L^{2}(0,T;H)}^{2} 
  + 
  \dfrac{\varepsilon}{2} \int_{0}^{t} \|\xi_{n}(s)\|_{H}^{2}\,\textup{d}s
\end{align*} 
for~a.e.~$t \in (0,T)$.  
Here, $C \in [0,\infty)$ denotes the constant satisfying \eqref{eq:device-4} and independent of $\varepsilon$  and $n$.  
Hence, choosing $\varepsilon$ sufficiently small and using $k*\ell \equiv 1$ on $(0,T)$, 
 we can take  a constant $ C_{0} \in [0,\infty)$  independent of $n$ such that 
\begin{align} 
&\bigl[k*\varphi^{ \bullet }(u_{n}( \bullet ))\bigr](t) + \int_{0}^{t} \| \xi_{n}(s)\|_{H}^{2}\,\textup{d}s  \notag 
\\ 
& \quad  \leq 
C_{0} \left( 1 + \int_{0}^{t} \varphi^{s}(u_{n}(s))\,\textup{d}s \right) 
=
C_{0} + C_{0} \left[ \ell * \bigl( k*\varphi^{ \bullet } (u_{n}(\bullet)) \bigr)\right](t)
\label{eq:uniform-estimate-un}
\end{align} 
for~a.e.~$t \in (0,T)$.  
Since $\varphi^{\bullet}(u_{n}(\bullet)) \in L^{\infty}(0,T)$ and $k \in L^{1}(0,T)$,  
we have $k * \varphi^{\bullet}(u_{n}(\bullet)) \in L^{\infty}(0,T)$.  
Therefore, applying \cref{prop:ineq} (see also \cite[Lemma~4.5]{P-AkagiNakajima-2025}), we obtain  
\begin{align}
  \sup_{n \in \mathbb{N}} \| k * \varphi^{\bullet}(u_{n}(\bullet)) \|_{L^{\infty}(0,T)} < \infty.
  \label{eq:kphi-uniform-Linf}
\end{align} 
Moreover, combining \eqref{eq:uniform-estimate-un} with \eqref{eq:kphi-uniform-Linf}, 
we get   
\begin{align} 
\sup_{n \in \mathbb{N}} \|\xi_{n}\|_{L^{2}(0,T;H)}^{2}
=
\sup_{n \in \mathbb{N}} \left\| \int_{0}^{ \bullet } \|\xi_{n}(s)\|_{H}^{2}\,\textup{d}s \right\|_{L^{\infty}(0,T)} 
< \infty.  
\label{eq:xi-n-uniform-L2:n}
\end{align} 
Furthermore, since \eqref{eq:fn-approx-eq} holds and  
$\sup_{n \in \mathbb{N}} \| f_{n}\|_{L^{2}(0,T;H)} < \infty$,  
we also have  
\begin{align} 
\sup_{n \in \mathbb{N}} \|\mathcal{B}(u_{n}-u_{0})\|_{L^{2}(0,T;H)} < \infty.  
\label{eq:B-uniform-L2-bound:n}
\end{align}
Next, we prove that $(u_{n})$ forms a Cauchy sequence in $L^{2}(0,T;H)$.  
Fix arbitrary $n,m \in \mathbb{N}$.  
Since $(u_{n}, \xi_{n}) \in \eqref{E:main-equation1}_{u_{0},f_{n}}$ and $(u_{m}, \xi_{m}) \in \eqref{E:main-equation1}_{u_{0},f_{m}}$,  
it follows from \cref{thm:wellposedness} that there exists a constant $C_{1} \in [0,\infty)$  
independent of $n$ and $m$ such that  
\begin{align*} 
\| u_{n} - u_{m}\|_{L^{2}(0,T;H)}^{2}  
\leq C_{1} \| f_{n} - f_{m}\|_{L^{2}(0,T;H)}^{2}. 
\end{align*}
Since $f_{n} \to f$ strongly in $L^{2}(0,T;H)$,  
we conclude that $\lim_{n,m \to \infty} \| u_{n} - u_{m}\|_{L^{2}(0,T;H)} = 0$.  
Hence there exists a function $u\in L^{2}(0,T;H)$ such that 
\begin{align*}
  u_{n} \to u \quad \textup{in $L^{2}(0,T;H)$ as $n \to \infty$}. 
\end{align*} 
From \eqref{eq:xi-n-uniform-L2:n}, \eqref{eq:B-uniform-L2-bound:n} and the reflexivity of  $L^{2}(0,T;H)$,   
there exist functions $\zeta_{1}, \zeta_{2} \in L^{2}(0,T;H)$ and a subsequence (not relabeled) such that  
\begin{align*}
  \mathcal{B}(u_{n}-u_{0}) &\rightharpoonup \zeta_{1} 
  \quad \text{weakly in } L^{2}(0,T;H) 
  \quad \text{as } n \to \infty,  
  \notag \\
  \xi_{n} &\rightharpoonup \zeta_{2} 
  \quad \text{weakly in } L^{2}(0,T;H) 
  \quad \text{as } n \to \infty.  
\end{align*} 
Passing to the limit in \eqref{eq:fn-approx-eq}, we obtain  
\begin{align*}
  \zeta_{1}(t) + \zeta_{2}(t) = f(t) 
  \quad \text{in } H 
  \quad \text{for a.e.~} t \in (0,T).
\end{align*}
Repeating the same argument as in \cref{subsec:conv-approx}, we conclude that $u$ is a strong solution to \eqref{E:main-equation1}.  
Hence the proof is complete under the assumption \eqref{eq:nonneg-varphi}.

Finally, we treat the general case.  
From \eqref{eq:td-subdiff-varphi-bound}, 
there exists a constant $D_{0} \in [0,\infty)$ such that  
\begin{align} 
  \varphi^{t}(z) + \frac{D_{0}}{2} \| z\|_{H}^{2} + D_{0} 
  \geq |\varphi^{t}(z)| + 1 \geq 0
\end{align}
for all $t \in [0,T]$ and all $z \in H$. 
For each $t \in [0,T]$, we define a functional $\psi^{t} \colon H \to (-\infty,\infty]$ by  
\begin{align*} 
  \psi^{t}(z) := \varphi^{t}(z) + \frac{D_{0}}{2}\|z\|_{H}^{2} + D_{0} 
  \quad \textup{for $z \in H$.} 
\end{align*}
Then $\psi^{t}$ is the proper lower-semicontinuous convex functional, and the condition
\eqref{eq:nonneg-varphi} holds when $\varphi^{t}$ is replaced by $\psi^{t}$. 
Furthermore, the family $\{\psi^{t}\}_{t \in [0,T]}$ satisfies assumptions \textup{(A1)} and \textup{(A2)}.  
Indeed, let $s,t \in [0,T]$, $a,b \in H$ and $A \in [0,\infty)$ satisfy  
\begin{align}
  \|b-a\|_{H}
  &\leq
  A|t-s|\bigl(1+|\varphi^{s}(a)|\bigr)^{1/2},
  \label{eq:A-1-psi-condition} \\
  \varphi^{t}(b)
  &\leq
  \varphi^{s}(a) + A|t-s|\bigl(1+|\varphi^{s}(a)|\bigr). 
  \label{eq:A-2-psi-condition}
\end{align}
Then Young's inequality yields  
\begin{align*}
  \psi^{t}(b)
  &= \varphi^{t}(b)
     + \frac{D_{0}}{2}\,\| b\|_{H}^{2}
     + D_{0}
\\
  &\leq 
     \varphi^{s}(a)
     + A|t-s| \bigl(1+|\varphi^{s}(a)|\bigr)
     + \frac{D_{0}}{2}\,\|a\|_{H}^{2}
     + \frac{D_{0}}{2}\!\left( \| b\|_{H}^{2}-\| a\|_{H}^{2} \right)
     + D_{0}
\\
  &= 
     \psi^{s}(a)
     + A|t-s| \bigl(1+|\varphi^{s}(a)|\bigr)
     + \frac{D_{0}}{2}\!\left( \| b\|_{H} + \| a\|_{H} \right)
      \left( \| b\|_{H} - \| a\|_{H} \right)
\\
  &\leq      
     \psi^{s}(a)
     + A|t-s| \bigl(1+|\psi^{s}(a)|\bigr)
     + \frac{D_{0}}{2}\!\left( \| b-a\|_{H} + 2 \| a\|_{H} \right)
       \| b - a\|_{H} 
\\
  &\leq          
     \psi^{s}(a)
     + A|t-s| \bigl(1+|\psi^{s}(a)|\bigr)
     + \frac{D_{0}}{2} \Bigl( A |t-s|
       \bigl( 1+ |\varphi^{s}(a)|\bigr)^{1/2} \Bigr)^{2}
\\ &\qquad
     + D_{0} \, \| a\|_{H} \, A |t-s|  
       \bigl( 1+ |\varphi^{s}(a)| \bigr)^{1/2}
\\
  &=          
     \psi^{s}(a)
     + A|t-s| \bigl(1+|\psi^{s}(a)|\bigr)
     + \frac{D_{0}}{2} A^{2} |t-s|^{2}
       \bigl( 1+ |\varphi^{s}(a)|\bigr)
\\ &\qquad
     + \sqrt{2 D_{0}}\,A\,|t-s|\!
       \left( \frac{D_{0}}{2}\|a\|_{H}^{2} \right)^{1/2}
       \bigl( 1+ |\varphi^{s}(a)| \bigr)^{1/2}
\\
  &\leq               
     \psi^{s}(a)
     + A|t-s| \bigl(1+|\psi^{s}(a)|\bigr)
     + D_{0} A^{2} T |t-s|
       \bigl( 1+ |\psi^{s}(a)|\bigr)
\\ &\qquad
     + \sqrt{2 D_{0}}\,A\,|t-s|  
      \bigl| \psi^{s}(a) \bigr|^{1/2}
       \bigl( 1+ |\psi^{s}(a)| \bigr)^{1/2} 
\\
  &\leq      
     \psi^{s}(a)
     + \left( A + D_{0} A^{2} T + \sqrt{2 D_{0}}\,A \right) |t-s| \bigl(1 + |\psi^{s}(a)|\bigr)
\end{align*}
for all $s,t \in [0,T]$, $a,b \in H$ and $A \in [0,\infty)$ satisfying \eqref{eq:A-1-psi-condition} and \eqref{eq:A-2-psi-condition}. 
This shows that the family $\{\psi^{t}\}_{t \in [0,T]}$ satisfies assumptions \textup{(A1)} and \textup{(A2)}.
Thus, for every $g \in L^{2}(0,T;H)$, there exist unique functions $v,\zeta \in L^{2}(0,T;H)$ such that  
$v-u_{0} \in D(\mathcal{B})$ (see \cref{subsec:time-nonlocal-ops}) and 
\begin{align} 
  \zeta(t) \in \partial \psi^{t}\bigl( v(t) \bigr), 
  \qquad 
  \mathcal{B}(v-u_{0})(t) + \zeta(t) = g(t)
  \label{eq:psi-u0-g} \tag{PP}
\end{align}
for a.e.\ $t \in (0,T)$.  
We write $(v,\zeta) \in \eqref{eq:psi-u0-g}_{u_{0},\,g}$ if 
$v,\zeta \in L^{2}(0,T;H)$ satisfy the above conditions.
We next define $\Lambda \colon L^{2}(0,T;H) \to L^{2}(0,T;H)$ as follows.
For each $h \in L^{2}(0,T;H)$, denote by $(v,\zeta)$ the unique pair in 
$L^{2}(0,T;H)\times L^{2}(0,T;H)$ such that 
$(v,\zeta) \in \eqref{eq:psi-u0-g}_{u_{0},\,f+D_{0}h}$. 
We then set $\Lambda(h) := v$. 
Here, a straightforward computation shows that the mapping $w \mapsto \frac{D_{0}}{2}\,\|w\|_{H}^{2} + D_{0}$
is convex and Fr\'echet differentiable in $H$, and its Fr\'echet derivative is given by  
$D_{0}\, I_{H}$, where $I_{H} \colon H \to H$ denotes the identity mapping on $H$. 
Thus, for each $t \in [0,T]$, we have $D(\partial\varphi^{t}) = D(\partial\psi^{t})$  
and 
\begin{equation*}
  \partial\psi^{t}(w)
  = \partial\varphi^{t}(w) + \{ D_{0}\, w \}
\end{equation*}
for all $w \in D(\partial\varphi^{t}) = D(\partial\psi^{t})$ (see, e.g., \cite[Corollary~2.11]{B-Brezis-1973}, \cite[Chapter~II, Propositions 7.6 and 7.7]{B-Showalter-1997}). 
In order to complete the proof, it suffices to show that there exists a unique function 
$v_{*} \in L^{2}(0,T;H)$ such that $\Lambda(v_{*}) = v_{*}$. 
Let $\beta \in (0,\infty)$ be a constant which will be determined later, and set 
$\mathfrak{X}_{\beta} := L^{2}(0,T;H)$ equipped with a norm given by  
\begin{align*}
  \| w \|_{\mathfrak{X}_{\beta}}
  :=
  \esssup_{t \in (0,T)}
  \left|
     \mathrm{e}^{- \beta t}
     \int_{0}^{t} \| w(s)\|_{H}^{2}\, \mathrm{d}s
  \right|
  =
  \bigl\|
     \mathrm{e}^{-\beta \,\bullet}
     \bigl[1*\|w\|_{H}^{2}\bigr](\bullet)
  \bigr\|_{L^{\infty}(0,T)}
\end{align*}
for $w \in \mathfrak{X}_{\beta}$. 
Then $(\mathfrak{X}_{\beta}, \|\bullet\|_{\mathfrak{X}_{\beta}})$ is a Banach space. 
Let $\kappa_{0} \in (0,1)$ be fixed.  
Then we can choose $\beta \in (0,\infty)$ sufficiently large such that
\begin{align} \label{eq:Lambda-contraction}
  \|\Lambda(w_{1}) - \Lambda(w_{2})\|_{\mathfrak{X}_{\beta}}
  \leq
  \kappa_{0}\, \|w_{1} - w_{2}\|_{\mathfrak{X}_{\beta}}
\end{align}
for all $w_{1}, w_{2} \in \mathfrak{X}_{\beta} = L^{2}(0,T;H)$.
Indeed, let $h_{1}, h_{2} \in L^{2}(0,T;H)$, and let 
$(v_{i},\zeta_{i}) \in \eqref{eq:psi-u0-g}_{u_{0},\,f + D_{0} h_{i}}$ for $i=1,2$.
Then we have $v_{1}-v_{2} \in D(\mathcal{B})$ and 
\begin{gather}
  \mathcal{B}(v_{1}-v_{2})(t)
  + \zeta_{1}(t) - \zeta_{2}(t)
  =
  D_{0}\bigl(h_{1}(t) - h_{2}(t)\bigr),
  \label{eq:B-diff} \\[0.5em]
  \zeta_{1}(t) - \zeta_{2}(t)
  \in
  \partial\psi^{t}\bigl(v_{1}(t)\bigr)
  -
  \partial\psi^{t}\bigl(v_{2}(t)\bigr) 
  \label{eq:psi-t-mono}
\end{gather}
for a.e.\ $t \in (0,T)$. 
From \eqref{eq:psi-t-mono} and the monotonicity of $\partial \psi^{t}$ 
(see \cref{subsec:subdifferential}), we obtain 
\begin{align}
  \int_{0}^{t}
    \bigl(
      \zeta_{1}(\tau) - \zeta_{2}(\tau),\,
      v_{1}(\tau) - v_{2}(\tau)
    \bigr)_{H}
  \, \mathrm{d}\tau
  \;\geq\; 0
  \label{eq:mono-int}
\end{align}
for a.e. $t \in (0,T)$. 
Hence, multiplying both sides of \eqref{eq:B-diff} by $v_{1}-v_{2} \in L^{2}(0,T;H)$ and integrating it over $(0,t)$, we deduce from \eqref{eq:mono-int}, \cref{P:frac_chain} and Young's inequality that 
\begin{align*} 
  &\dfrac{1}{2} \bigl[ k * \| v_{1}-v_{2}\|_{H}^{2} \bigr](t) 
  \leq    
  \int_{0}^{t}  \bigl( \mathcal{B}(v_{1}-v_{2})(s), v_{1}-v_{2}(s) \bigr)_{H} \, \textup{d} s     
\\ & \quad \leq     
  \int_{0}^{t}  \bigl( \mathcal{B}(v_{1}-v_{2})(s) + \zeta_{1}(s) - \zeta_{2}(s), v_{1}-v_{2}(s) \bigr)_{H} \, \textup{d} s 
\\ & \quad =     
  D_{0} \int_{0}^{t}  \bigl( h_{1}(s)-h_{2}(s), v_{1}-v_{2}(s) \bigr)_{H} \, \textup{d} s
\\ & \quad \leq     
  \dfrac{D_{0}^{2}}{2} \int_{0}^{t}  \bigl\|h_{1}(s)-h_{2}(s)\bigr\|_{H}^{2} \, \textup{d} s 
  + 
  \dfrac{1}{2}\int_{0}^{t} \bigl\|v_{1}-v_{2}(s) \bigr\|_{H}^{2} \, \textup{d} s 
\\ & \quad =      
  \dfrac{D_{0}^{2}}{2} \left( 1*\bigl\|h_{1}-h_{2}\bigr\|_{H}^{2} \right)(t) 
  + 
  \dfrac{1}{2}  \left(1 * \bigl\|v_{1} - v_{2} \bigr\|_{H}^{2} \right)(t)   
\end{align*} 
for~a.e.~$t \in (0,T)$. 
Convolving both sides with $\ell$ and using $k*\ell \equiv 1$ on $(0,T)$, we get 
\begin{align*} 
  &\dfrac{1}{2} \bigl(1 * \| v_{1}-v_{2}\|_{H}^{2} \bigr)(t) 
  =
  \dfrac{1}{2} \bigl( (\ell * k) * \| v_{1}-v_{2}\|_{H}^{2} \bigr)(t) 
\\ & \quad \leq    
  \dfrac{D_{0}^{2}}{2} \left[ \ell * \left( 1*\bigl\|h_{1}-h_{2}\bigr\|_{H}^{2} \right) \right](t) 
  + 
  \dfrac{1}{2}  \left[ \ell * \left(1 * \bigl\|v_{1} - v_{2} \bigr\|_{H}^{2} \right) \right](t)
\end{align*} 
for~a.e.~$t \in (0,T)$. 
Therefore it follows from \eqref{eq:conv-exp-identity} and Young's convolution inequality that  
\begin{align*} 
  &\dfrac{1}{2} \bigl(1 * \| v_{1}-v_{2}\|_{H}^{2} \bigr)(t) \, \textup{e}^{- \beta t} 
\\ & \quad \leq    
  \dfrac{D_{0}^{2}}{2} \left[ \ell * \left( 1*\bigl\|h_{1}-h_{2}\bigr\|_{H}^{2} \right) \right](t) \, \textup{e}^{- \beta t}
  + 
  \dfrac{1}{2}  \left[ \ell * \left(1 * \bigl\|v_{1} - v_{2} \bigr\|_{H}^{2} \right) \right](t) \, \textup{e}^{- \beta t} 
\\ & \quad =
  \dfrac{D_{0}^{2}}{2} \left[ \left( \ell(\bullet) \, \textup{e}^{- \beta \bullet}  \right) * \biggl( \left( 1*\bigl\|h_{1}-h_{2}\bigr\|_{H}^{2} \right) (\bullet) \, \textup{e}^{- \beta \bullet} \biggr) \right](t) 
\\ & \qquad + 
  \dfrac{1}{2}  \left[ \left( \ell(\bullet) \, \textup{e}^{- \beta \bullet}  \right) * \biggl( \left( 1*\bigl\|v_{1}-v_{2}\bigr\|_{H}^{2} \right) (\bullet) \, \textup{e}^{- \beta \bullet} \biggr) \right](t) 
\\ & \quad \leq      
  \dfrac{D_{0}^{2}}{2} \left\| \ell(\bullet) \, \textup{e}^{- \beta \bullet} \right\|_{L^{1}(0,T)} \left\| h_{1}-h_{2} \right\|_{\mathfrak{X}_{\beta}} 
  +
  \dfrac{1}{2} \left\| \ell(\bullet) \, \textup{e}^{- \beta \bullet} \right\|_{L^{1}(0,T)} \left\| v_{1}-v_{2} \right\|_{\mathfrak{X}_{\beta}} 
\end{align*}
for~a.e.~$t \in (0,T)$. 
Taking the supremum of both sides over $(0,T)$, we obtain  
\begin{align*} 
  \dfrac{1}{2}\left( 1 - \left\| \ell(\bullet) \, \textup{e}^{- \beta \bullet} \right\|_{L^{1}(0,T)} \right) \left\| v_{1}-v_{2} \right\|_{\mathfrak{X}_{\beta}} 
  \leq 
  \dfrac{D_{0}^{2}}{2} \left\| \ell(\bullet) \, \textup{e}^{- \beta \bullet} \right\|_{L^{1}(0,T)} \left\| h_{1}-h_{2} \right\|_{\mathfrak{X}_{\beta}}.  
\end{align*}
Thus, we can choose a constant $\beta_{0} \in (0,\infty)$ sufficiently large such that \eqref{eq:Lambda-contraction} holds. 
In particular, $\Lambda$ is a contraction mapping on the Banach space $(\mathfrak{X}_{\beta_{0}}, \| \bullet \|_{\mathfrak{X}_{\beta_{0}}})$.  
By virtue of Banach's fixed point theorem, there exists a unique function $v_{*} \in \mathfrak{X}_{\beta_{0}} = L^{2}(0,T;H)$ satisfying $\Lambda(v_{*}) = v_{*}$.  
This completes the proof.  \qed 


\section{Application}  \label{Sec: Application}

In this section, we apply the abstract results obtained so far to the Cauchy--Dirichlet problem for certain nonlinear parabolic equations on moving domains.

We denote by $d \in \mathbb{N}$, by $x = (x_{1}, \ldots, x_{d})$ a generic point of $\mathbb{R}^{d}$, by $\gamma = (\gamma_{1}, \ldots, \gamma_{d})$ a multi-index and by $D_{x}^{\gamma}$ the differentiation,  
\begin{align*} 
  \dfrac{\partial^{|\gamma|}}{\partial x_{1}^{\gamma_{1}} \cdots \partial x_{d}^{\gamma_{d}}}  
\end{align*} 
where $|\gamma| = \sum_{j=1}^{d}\gamma_{j}$. 
Let $U \subset \mathbb{R}^{d}$ be a bounded domain with smooth boundary $\partial U$, and 
for each $t \in [0,T]$, let $\Omega_{t} \subset U$ be a bounded domain of $\mathbb{R}^{d}$ with smooth boundary $\partial\Omega_{t}$. 
We impose the following assumption: 
\begin{itemize}
\item[(B)] 
It holds that $\overline{\Omega_{t}} \subset U$ for all $t \in [0,T]$. 
Furthermore, there exists a diffeomorphism $\Theta_{t} = (\theta_{1}^{t}, \ldots, \theta_{d}^{t})$ of class $C^{1}$ from $\overline{U}$ onto itself with $\Theta_{t}(\Omega_{0}) = \Omega_{t}$ for every $t \in [0,T]$ such that $\Theta_{0}$ is the identity on $\overline{U}$ and $D_{x}^{\gamma}\theta_{i}^{t}$ is continuously differentiable in $t$ on $[0,T] \times \overline{U}$ for every multi-index $\gamma$ with $|\gamma| \leq 1$ and $i = 1, \ldots, d$.  
\end{itemize} 
\noindent 
Let $Q  \subset [0,T] \times U$ and $\partial Q \subset [0,T] \times \overline{U}$ be defined by
\begin{align*}
  Q := \bigcup_{t \in [0,T]} \bigl(\{t\}\times\Omega_{t}\bigr), 
  \quad 
  \partial Q := \bigcup_{t \in [0,T]} \bigl(\{t\}\times\partial\Omega_{t}\bigr).  
\end{align*} 
We now consider the following Cauchy--Dirichlet problem:
\begin{equation}\tag{CDP}\label{PP}
\left. 
\begin{aligned}
  \partial_{t}^{\alpha} \bigl(u-u_{0}\bigr) - \Delta_{p} u &= f &&\textup{in } Q,\\
  u &= 0 &&\textup{on } \partial Q,  
\end{aligned}
\right\}
\end{equation}
where $\alpha \in (0,1)$, $p \in [2,\infty)$, and $u_{0} = u_{0}(x)$ and 
 $f=f(t,x)$ are prescribed.  
Moreover, $\partial_{t}^{\alpha}(u-u_{0}) := \partial_{t}\bigl[k_{1-\alpha}*(u-u_{0})\bigr]$ denotes the $\alpha$-th order Riemann--Liouville derivative of $u-u_{0}$, and $\Delta_{p}$ is the so-called \emph{$p$-Laplacian} given as $\Delta_{p}u := - \textup{div}( |\nabla u|^{p-2}\nabla u )$. 
For each $t \in [0,T]$, let $K(t) \subset W^{1,p}(U)$ be defined by 
\begin{align*} 
  K(t) := \{w \in W^{1,p}(U) \colon  w|_{\Omega_{t}} \in W_{0}^{1,p}(\Omega_{t}), \, w(x) = 0 \textup{ for~a.e.~} x \in U \setminus \Omega_{t}\}. 
\end{align*} 
We are concerned with the definition of $L^{2}$-solutions to \eqref{PP}.  
\begin{definition}[$L^{2}$-solutions to \eqref{PP}] \label{def:strong-sol} 
Let $u_{0} \in L^{2}(U)$, and let $f \in L^{2}(0,T;L^{2}(U))$.   
A function $u \in L^{2} \bigl(0,T;L^{2}(U)\bigr)$ is called an \emph{$L^{2}$-solution} to
\eqref{PP} if the following conditions are all satisfied\/\textup{:} 
\begin{itemize}
\item[\textup{(i)}] It holds that $k_{1-\alpha} * (u-u_0) \in W^{1,2} \bigl(0,T;L^{2}(U)\bigr)$, $[k_{1-\alpha} * (u-u_0)](0) = 0$ and $u(t, \bullet) \in K(t)$ for~a.e.~$t \in (0,T)$.  
\item[\textup{(ii)}] There exists a null set $N \subset (0,T)$ such that for all $t \in (0,T) \setminus N$, the following properties are satisfied\/\textup{:} $\tilde{u}(t,\bullet) := u(t,\bullet)|_{\Omega_{t}} \in W_{0}^{1,p}(\Omega_{t})$ and $\Delta_{p} \tilde{u}(t,\bullet) \in L^{2}(\Omega_{t})$, and 
\begin{equation*}
  \partial_t\!\bigl[k_{1-\alpha} * (u-u_0)\bigr](t, \bullet ) - \Delta_{p} \tilde{u}(t, \bullet )
  = f(t,\bullet) \quad \text{in } L^{2}(\Omega_{t}).
\end{equation*}
\end{itemize}
\end{definition}
Let $\{\Omega_{t}\}_{t \in [0,T]}$ be a family of bounded domains such that \textup{(B)} holds, and set $H := L^{2}(U)$. 
For each $t \in [0,T]$, we define $\varphi_{p}^{t} \colon L^{2}(U) \to [0,\infty]$ by
\begin{equation*} \label{eq:energy-pt}
  \varphi_{p}^{t}(w) :=
  \begin{cases}
    \displaystyle \frac{1}{p} \int_{\Omega_{t}} |\nabla w|^{p}\,\textup{d}x
    &\text{if} \ w \in K(t), \\[1.25ex] 
    \infty &\text{otherwise} 
  \end{cases}
\end{equation*}
for $w \in L^{2}(U)$. 
Then for each $t \in [0,T]$, $\varphi_{p}^{t}$ is proper, lower-semicontinuous, and  convex in $L^{2}(U)$, and moreover, $\partial \varphi_{p}^{t}(w)$ coincides with $-\Delta_{p}w$ equipped with the homogeneous Dirichlet boundary condition in the distributional sense for $w \in D(\partial \varphi_{p}^{t})$, where 
\begin{align*} 
D(\partial \varphi_{p}^{t}) = \{w \in L^{2}(U) \cap K(t) \colon w|_{\Omega_{t}} \in W_{0}^{1,p}(\Omega_{t})\textup{ and } \Delta_{p} (w|_{\Omega_{t}}) \in L^{2}(\Omega_{t})\}.
\end{align*}
Hence the Cauchy--Dirichlet problem \eqref{PP} is reduced to  the following abstract Cauchy problem: 
\begin{equation*} 
  \partial_{t}\!\bigl[k_{1-\alpha} * (u-u_{0})\bigr](t) 
  + \partial \varphi_{p}^{t}\bigl(u(t)\bigr) = f(t)
  \quad \textup{in } L^{2}(U) \quad \textup{for $t \in (0,T)$.}
\end{equation*}

Moreover, the assumption (B) guarantees that both (A1) and (A2) are satisfied.  
Indeed, from the assumption (B), we can define a continuous mapping  
$\Psi_{p} \colon [0,T] \times [0,T] \times L^{2}(U) \to L^{2}(U)$ by  
$
\Psi_{p}(t,s,w(\bullet )) := w \bigl( \Theta(s,\Theta^{-1}(t,\bullet)) \bigr)
$ 
for $(t,s,w) \in [0,T] \times [0,T] \times L^{2}(U)$. 
Then, there exists a constant $C \in [0,\infty)$ such that  
\begin{align*} 
  \|\Psi_{p}(t,s,w) - w\|_{L^{2}(U)}  
  &\leq C\,|t-s| \bigl(1 + |\varphi_{p}^{s}(w)|\bigr)^{1/2}, \\
  \varphi_{p}^{t}\bigl(\Psi_{p}(t,s,w)\bigr)   
  &\leq \varphi_{p}^{s}(w) + C\,|t-s| \bigl(1 + |\varphi_{p}^{s}(w)|\bigr)
\end{align*}
for all $(t,s,w) \in [0,T] \times [0,T] \times L^{2}(U)$ (see \cite[Lemmas 3.2.2 and 3.2.3]{A-Kenmochi-1981}). 
Hence \textup{$(A\varphi^{t})_{NL}$} holds.  
From \cref{Remark: Sufficient condition for A1 and A2}, these estimates enable us to verify that conditions (A1) and (A2) are fulfilled.  
Thus we can apply \cref{thm:main1,thm:main2} to conclude the existence of $L^{2}$-solutions to \eqref{PP}.  

\begin{theorem}[Existence of $L^{2}$-solutions to \eqref{PP}] \label{thm:application}
Let $\{\Omega_{t}\}_{t \in [0,T]}$ be a family of bounded domains of $\mathbb{R}^{d}$ such that \textup{(B)} holds. Then, for every $u_{0}\in L^{2}(U)$ satisfying $u_{0}|_{\Omega_{0}} \in W^{1,p}_{0}(\Omega_{0})$ and $f \in L^{2}\bigl(0,T;L^{2}(U)\bigr)$, the Cauchy--Dirichlet problem \eqref{PP} admits a unique $L^{2}$-solution.   
In addition, if $f \in W^{1,2}(0,T;L^{2}(U))$, then the unique solution $u$ satisfies the following properties\/\textup{:} 
\begin{gather*}
  k_{\alpha} * \bigl\|\partial_{t}\bigl[k_{1-\alpha}*(u-u_{0})\bigr]\bigr\|_{L^{2}(U)}^{\,2}
  \in L^{\infty}(0,T),
  \\   
  \esssup_{t \in (0,T)} \|u(t,\bullet)\|_{W_{0}^{1,p}(\Omega_{t})} < \infty. 
\end{gather*} 
\end{theorem}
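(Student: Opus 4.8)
The plan is to reduce the Cauchy--Dirichlet problem \eqref{PP} to the abstract Cauchy problem \eqref{E:main-equation1} in $H := L^{2}(U)$ with the kernel pair $(k,\ell) = (k_{1-\alpha},k_{\alpha}) \in PC$, and then simply quote \cref{thm:main2}, \cref{thm:main1} and \cref{thm:wellposedness}. First I would record the setup already assembled above: for each $t$, $\varphi_{p}^{t} \colon L^{2}(U) \to [0,\infty]$ is proper, lower-semicontinuous and convex, its domain and subdifferential are the ones displayed above, and assumption \textup{(B)}, via the mapping $\Psi_{p}$, yields \textup{$(A\varphi^{t})_{NL}$} and hence both \textup{(A1)} and \textup{(A2)} by \cref{Remark: Sufficient condition for A1 and A2}. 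The second and only substantive step is the dictionary between the two solution notions: using the explicit description of $D(\partial\varphi_{p}^{t})$ and of $\partial\varphi_{p}^{t}$ together with \cref{prop:Phi-t-properties}, I would check that $u \in L^{2}(0,T;L^{2}(U))$ satisfies \cref{def:strong-sol} if and only if $(u,\xi) \in \eqref{E:main-equation1}_{u_{0},f}$ with $\xi := f - \partial_{t}[k_{1-\alpha}*(u-u_{0})]$; concretely, part~(i) of \cref{def:strong-sol} is the condition $k_{1-\alpha}*(u-u_{0}) \in W^{1,2}(0,T;H)$ with vanishing value at $0$ (equivalently $u-u_{0}\in D(\mathcal B)$), while part~(ii) says exactly that the $L^{2}(U)$-valued, strongly measurable selection $\xi$ restricts on $\Omega_{t}$ to $-\Delta_{p}\tilde u(t,\bullet)$ for a.e.\ $t$, i.e.\ $\xi \in \partial\Phi(u)$.

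With this dictionary in place the main assertion is immediate. Since $u_{0}|_{\Omega_{0}} \in W^{1,p}_{0}(\Omega_{0})$ (so that its zero extension lies in $K(0) = D(\varphi_{p}^{0})$), for $f \in L^{2}(0,T;L^{2}(U))$ I would apply \cref{thm:main2} with $\varphi^{t} = \varphi_{p}^{t}$ to obtain a unique strong solution $u$ of \eqref{E:main-equation1}, which by the dictionary is precisely an $L^{2}$-solution of \eqref{PP}; uniqueness of $L^{2}$-solutions follows either from the same translation or directly from \cref{thm:wellposedness}.

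For the additional regularity under the hypothesis $f \in W^{1,2}(0,T;L^{2}(U))$, I would instead invoke \cref{thm:main1}, which produces the same solution together with $\varphi_{p}^{\bullet}(u(\bullet)) \in L^{\infty}(0,T)$ and $\ell * \|\partial_{t}[k*(u-u_{0})]\|_{H}^{2} \in L^{\infty}(0,T)$. The second membership, written out, is exactly $k_{\alpha} * \|\partial_{t}[k_{1-\alpha}*(u-u_{0})]\|_{L^{2}(U)}^{2} \in L^{\infty}(0,T)$. For the first, $\varphi_{p}^{t}(u(t)) = \tfrac{1}{p}\|\nabla u(t,\bullet)\|_{L^{p}(\Omega_{t})}^{p}$ for a.e.\ $t$ gives $\esssup_{t}\|\nabla u(t,\bullet)\|_{L^{p}(\Omega_{t})} < \infty$, and since every $\Omega_{t}$ lies in the fixed bounded domain $U$, the Poincar\'e constant of $W^{1,p}_{0}(\Omega_{t})$ is bounded by that of $U$ (extend by zero), whence $\esssup_{t\in(0,T)}\|u(t,\bullet)\|_{W^{1,p}_{0}(\Omega_{t})} < \infty$.

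The one point requiring genuine care is the dictionary step: one must verify line by line that \cref{def:strong-sol} coincides with the abstract definition of strong solution, including that the map $t \mapsto -\Delta_{p}\tilde u(t,\bullet)$, extended measurably to $U$ by the equation, is a section of $\partial\Phi$ — here the fact that \eqref{PP} only constrains $u$ on $Q$ matches the fact that $\partial\varphi_{p}^{t}(w)$ is determined only on $\Omega_{t}$, so the equation $\xi = f - \partial_{t}[k_{1-\alpha}*(u-u_{0})]$ fixes $\xi$ consistently. All of this is essentially contained in the identification of $\partial\varphi_{p}^{t}$ with $-\Delta_{p}$ quoted above and in \cref{prop:Phi-t-properties}; the remaining verifications (membership $u_{0}\in D(\varphi_{p}^{0})$, and that \textup{(B)} indeed places $\{\varphi_{p}^{t}\}$ in the framework of the abstract theorems) are routine bookkeeping.
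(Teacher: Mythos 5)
Your proposal is correct and takes essentially the same route as the paper: reduce \eqref{PP} to \eqref{E:main-equation1} with $H=L^{2}(U)$, $\varphi^{t}=\varphi_{p}^{t}$ and $(k,\ell)=(k_{1-\alpha},k_{\alpha})\in PC$, verify \textup{(A1)} and \textup{(A2)} from \textup{(B)} via $\Psi_{p}$ and \textup{$(A\varphi^{t})_{NL}$} (\cref{Remark: Sufficient condition for A1 and A2}), and then invoke \cref{thm:wellposedness,thm:main1,thm:main2}. Your explicit dictionary between \cref{def:strong-sol} and the abstract strong-solution notion, and the uniform Poincar\'e bound on $\Omega_{t}\subset U$ used to pass from $\varphi_{p}^{\bullet}(u(\bullet))\in L^{\infty}(0,T)$ to the $W_{0}^{1,p}(\Omega_{t})$ bound, only spell out details the paper leaves implicit.
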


\section*{Acknowledgements} 
This work is a part of the author's Ph.D.~thesis.
The author would like to express sincere gratitude to his supervisor, Professor  Goro~Akagi 
for numerous discussions, which provided valuable insights.
Furthermore, the author is deeply grateful for the careful reading of earlier drafts
and for many valuable comments and suggestions. 
This work was supported by the Research Institute for Mathematical Sciences, an International Joint Usage/Research Center located in Kyoto University.
The author is also grateful for the hospitality of the Erwin Schr\"odinger International Institute for Mathematics and Physics, where the author had an occasion to present this work during the thematic program ``Free Boundary Problems''.

\section*{Statements and Declarations} 

\subsection*{Funding}
This work was supported by JST SPRING, Grant Number JPMJSP2114.

\subsection*{Competing Interests}
The author declares that there are no competing interests.

\subsection*{Data Availability}
This study does not involve any datasets.


\begin{bibdiv}
\begin{biblist}


\bib{B-AchleitnerAkagiKuehnMelenkRademacherSoresinaYang-2024}{article}{
  author={F.~Achleitner},
  author={G.~Akagi},
  author={C.~Kuehn},
  author={J.M.~Melenk},
  author={J.D.M.~Rademacher},
  author={C.~Soresina},
  author={J.~Yang},
  title={Fractional dissipative PDEs}, 
  booktitle={Fractional dispersive models and applications---recent developments and future perspectives},
  series={Nonlinear Syst. Complex.},
  volume={37},
  date={[2024] \copyright 2024},
  pages={53--122}, 
  doi={\url{https://doi.org/10.1007/978-3-031-54978-6_3}},
}

\bib{A-Akagi-2019}{article}{
  author={G.~Akagi},
  title={Fractional flows driven by subdifferentials in Hilbert spaces},
  journal={Israel J. Math.},
  volume={234},
  date={2019},
  number={2},
  pages={809--862}, 
  doi={\url{https://doi.org/10.1007/s11856-019-1936-9}}
}

\bib{P-AkagiNakajima-2025}{article}{
  author={G.~Akagi},
  author={Y.~Nakajima},
  title={Time-fractional gradient flows for nonconvex energies in Hilbert spaces},
  journal={arXiv preprint},
  date={2025}, 
  note={arXiv:2501.08059}, 
}

\bib{P-AkagiSodiniStefanelli-2025}{article}{
  author={G.~Akagi},
  author={G.E.~Sodini},
  author={U.~Stefanelli},
  title={Global well-posedness for a time-fractional doubly nonlinear equation},
  journal={arXiv preprint},
  date={2025},
  note={arXiv:2508.13694},
}


\bib{B-Barbu-1976}{book}{
  author={V.~Barbu}, 
  note={Translated from the Romanian},
  publisher={Editura Academiei Republicii Socialiste Rom\^{a}nia,
  Bucharest; Noordhoff International Publishing, Leiden},
  title={Nonlinear semigroups and differential equations in Banach spaces}, 
  date={1976},
  pages={352}, 
}

\bib{B-BauschkeHeinzCombettes-2017}{book}{
  author={H.H.~Bauschke},
  author={P.L.~Combettes},
  title={Convex analysis and monotone operator theory in Hilbert spaces},
  series={CMS Books in Mathematics/Ouvrages de Math\'{e}matiques de la SMC},
  edition={2},
  publisher={Springer, Cham},
  date={2017},
  pages={xix+619},
  doi={\url{https://doi.org/10.1007/978-3-319-48311-5}},
}

\bib{A-BonforteGualdaniIbarrondo-2026}{article}{
  author={M.~Bonforte},
  author={M.~Gualdani},
  author={P.~Ibarrondo},
  title={Time-fractional porous medium type equations. sharp time decay and regularization},
  journal={Calc. Var. Partial Differential Equations},
  volume={65},
  date={2026},
  number={1},
  pages={Paper No. 30},
  doi={\url{https://doi.org/10.1007/s00526-025-03192-2}}
}

\bib{B-Brezis-1973}{book}{
  author={H.~Br\'{e}zis},
  title={Op\'{e}rateurs maximaux monotones et semi-groupes de contractions dans les espaces de Hilbert},
  series={North-Holland Mathematics Studies, No. 5}, 
  publisher={North-Holland Publishing Co., Amsterdam-London; American Elsevier Publishing Co., Inc., New York},
  date={1973},
  pages={vi+183},
}

\bib{B-Cioranescu-1990}{book}{
  author={I.~Cioranescu},
  title={Geometry of Banach spaces, duality mappings and nonlinear problems},
  series={Mathematics and its Applications},
  volume={62},
  publisher={Kluwer Academic Publishers Group, Dordrecht},
  date={1990},
  pages={xiv+260},
  doi={\url{https://doi.org/10.1007/978-94-009-2121-4}}, 
}

\bib{A-Clement-1984}{article}{
  author={Ph.~Cl\'{e}ment},
  title={On abstract Volterra equations in Banach spaces with completely positive kernels},
  book={
    series={Lecture Notes in Math.},
    volume={1076},
    publisher={Springer, Berlin},
  },
  date={1984},
  pages={32--40}, 
  doi={\url{https://doi.org/10.1007/BFb0072763}},
}

\bib{A-Clement-1981}{article}{
  author={Ph.~Cl\'{e}ment},
  author={J.A.~Nohel},
  title={Asymptotic behavior of solutions of nonlinear Volterra equations with completely positive kernels},
  journal={SIAM J. Math. Anal.},
  volume={12},
  date={1981},
  number={4},
  pages={514--535},
  doi={\url{https://doi.org/10.1137/0512045}}, 
}

\bib{B-GalWarma-2020}{book}{
  author={C.G.~Gal},
  author={M.~Warma},
  title={Fractional-in-time semilinear parabolic equations and applications},
  series={Math\'{e}matiques \& Applications (Berlin) [Mathematics \&
   Applications]},
  volume={84},
  publisher={Springer, Cham},
  date={[2020] \copyright 2020},
  pages={xii+184},
  doi={\url{https://doi.org/10.1007/978-3-030-45043-4}},
}

\bib{B-PapageorgiouGasinski-2006}{book}{
  author={L.~Gasi\'{n}ski}, 
  author={N.S.~Papageorgiou},
  title={Nonlinear analysis},
  series={Series in Mathematical Analysis and Applications},
  volume={9},
  publisher={Chapman \& Hall/CRC, Boca Raton, FL},
  date={2006},
  pages={xii+971},
  isbn={978-1-58488-484-2, 1-58488-484-3},
}

\bib{A-Gripenberg-1985}{article}{
  author={G.~Gripenberg},
  title={Volterra integro-differential equations with accretive nonlinearity},
  journal={J. Differential Equations},
  volume={60},
  date={1985},
  number={1},
  pages={57--79}, 
  doi={\url{https://doi.org/10.1016/0022-0396(85)90120-2}},
}

\bib{A-Kenmochi-1975}{article}{
  author={N.~Kenmochi},
  title={Some nonlinear parabolic variational inequalities},
  journal={Israel J. Math.},
  volume={22},
  date={1975},
  number={3-4},
  pages={304--331},
  doi={\url{https://doi.org/10.1007/BF02761596}}
}

\bib{A-Kenmochi-1981}{article}{
  author={N.~Kenmochi},
  title={Solvability of nonlinear evolution equations with time-dependent constraints and applications},
  journal={Bull. Fac. Educ., Chiba Univ.}, 
  volume={30},
  date={1981},
  pages={1--87},
}

\bib{A-Komura-1967}{article}{
  author={Y.~K\={o}mura},
  title={Nonlinear semi-groups in Hilbert space},
  journal={J. Math. Soc. Japan},
  volume={19},
  date={1967},
  pages={493--507}, 
  doi={\url{10.2969/jmsj/01940493}}
}

\bib{B-KubicaRyszewskaYamamoto-2020}{book}{
  author={A.~Kubica},
  author={K.~Ryszewska},
  author={M.~Yamamoto},
  title={Time-fractional differential equations---a theoretical introduction},
  series={SpringerBriefs in Mathematics},
  publisher={Springer, Singapore},
  date={[2020] \copyright 2020},
  pages={x+134},
  doi={https://doi.org/10.1007/978-981-15-9066-5},
}

\bib{A-LiSalgado-2023}{article}{
  author={W.~Li},
  author={A.J.~Salgado},
  title={Time fractional gradient flows: theory and numerics},
  journal={Math. Models Methods Appl. Sci.},
  volume={33},
  date={2023},
  number={2},
  pages={377--453},
  doi={\url{https://doi.org/10.1142/S0218202523500100}},
}

\bib{A-Otani-1977}{article}{
  author={M.~\^{O}tani},
  title={On the existence of strong solutions for $du/dt(t)+\partial \psi^{1}(u(t))-\partial \psi^{2}(u(t))\ni f(t)$},
  journal={J. Fac. Sci. Univ. Tokyo Sect. IA Math.},
  volume={24},
  date={1977},
  number={3},
  pages={575--605}, 
}

\bib{A-Otani-1993/94}{article}{
  author={M.~\^{O}tani},
  title={Nonlinear evolution equations with time-dependent constraints},
  journal={Adv. Math. Sci. Appl.},
  volume={3},
  date={1993/94},
  number={Special Issue},
  pages={383--399},
}

\bib{B-PapageorgiouNikolaosRadulescuRepovs-2019}{book}{
  author={N.S.~Papageorgiou},
  author={V.D.~R\u{a}dulescu},
  author={D.D.~Repov\v{s}},
  title={Nonlinear analysis---theory and methods},
  series={Springer Monographs in Mathematics},
  publisher={Springer, Cham},
  date={2019},
  pages={xi+577}, 
  doi={\url{https://doi.org/10.1007/978-3-030-03430-6}},
} 

\bib{B-Roubivcek-2005}{book}{
  author={T.~Roub\'{\i}\v{c}ek},
  title={Nonlinear partial differential equations with applications},
  series={International Series of Numerical Mathematics},
  volume={153},
  publisher={Birkh\"{a}user Verlag, Basel},
  date={2005},
  pages={xviii+405},
  isbn={978-3-7643-7293-4, 3-7643-7293-1},
}

\bib{A-SchmitzWittbold-2024}{article}{
  author={K.~Schmitz},
  author={P.~Wittbold},
  title={Entropy solutions for time-fractional porous medium type equations},
  journal={Differential Integral Equations},
  volume={37},
  date={2024},
  number={5--6},
  pages={309--322}, 
  doi={\url{https://doi.org/10.57262/die037-0506-309}}
}

\bib{B-Showalter-1997}{book}{
  author={R.E.~Showalter},
  title={Monotone operators in Banach space and nonlinear partial differential equations},
  series={Mathematical Surveys and Monographs},
  volume={49},
  publisher={American Mathematical Society, Providence, RI},
  date={1997},
  pages={xiv+278},
  doi={https://doi.org/10.1090/surv/049},
}

\bib{A-Tarasov-2016}{article}{
  author={V.E.~Tarasov},
  title={On chain rule for fractional derivatives},
  journal={Commun. Nonlinear Sci. Numer. Simul.},
  volume={30},
  date={2016},
  number={1--3},
  pages={1--4},
  doi={\url{https://doi.org/10.1016/j.cnsns.2015.06.007}}
}

\bib{A-VergaraZacher-2008}{article}{
  author={V.~Vergara},
  author={R.~Zacher},
  title={Lyapunov functions and convergence to steady state for differential equations of fractional order},
  journal={Math. Z.},
  volume={259},
  date={2008},
  number={2},
  pages={287--309}, 
  doi={\url{https://doi.org/10.1007/s00209-007-0225-1}}
}

\bib{A-VergaraZacher-2015}{article}{
  author={V.~Vergara},
  author={R.~Zacher},
  title={Optimal decay estimates for time-fractional and other nonlocal subdiffusion equations via energy methods},
  journal={SIAM J. Math. Anal.},
  volume={47},
  date={2015},
  number={1},
  pages={210--239},
  doi={\url{https://doi.org/10.1137/130941900}}
}

\bib{A-VergaraZacher-2017}{article}{
  author={V.~Vergara},
  author={R.~Zacher},
  title={Stability, instability, and blowup for time fractional and other nonlocal in time semilinear subdiffusion equations},
  journal={J. Evol. Equ.},
  volume={17},
  date={2017},
  number={1},
  pages={599--626},
  doi={\url{https://doi.org/10.1007/s00028-016-0370-2}}
}

\bib{A-Watanabe-1973}{article}{
  author={J.~Watanabe},
  title={On certain nonlinear evolution equations},
  journal={J. Math. Soc. Japan},
  volume={25},
  date={1973},
  pages={446--463}, 
  doi={\url{10.2969/jmsj/02530446}}
}

\bib{A-WittboldWolejkoZacher-2021}{article}{
  author={P.~Wittbold},
  author={P.~Wolejko},
  author={R.~Zacher},
  title={Bounded weak solutions of time-fractional porous medium type and more general nonlinear and degenerate evolutionary integro-differential equations},
  journal={J. Math. Anal. Appl.},
  volume={499},
  date={2021},
  number={1},
  pages={Paper No.~125007, 20},
  doi={\url{https://doi.org/10.1016/j.jmaa.2021.125007}}
}

\bib{A-Yamada-1976}{article}{
  author={Y.~Yamada},
  title={On evolution equations generated by subdifferential operators},
  journal={J. Fac. Sci. Univ. Tokyo Sect. IA Math.}, 
  volume={23},
  date={1976},
  number={3},
  pages={491--515},
}

\bib{A-Zacher-2008}{article}{
  author={R.~Zacher},
  title={Boundedness of weak solutions to evolutionary partial integro-differential equations with discontinuous coefficients},
  journal={J. Math. Anal. Appl.}, 
  volume={348}, 
  date={2008}, 
  number={1}, 
  pages={137--149}, 
  doi={\url{https://doi.org/10.1016/j.jmaa.2008.06.054}} 
}

\bib{A-Zacher-2009-abstract}{article}{
  author={R.~Zacher},
  title={Weak solutions of abstract evolutionary integro-differential equations in Hilbert spaces},
  journal={Funkcial. Ekvac.},
  volume={52},
  date={2009},
  number={1},
  pages={1--18},
  doi={\url{https://doi.org/10.1619/fesi.52.1}}
}

\bib{B-Zhou-2024}{book}{
  author={Y.~Zhou},
  title={Basic theory of fractional differential equations},
  note={Third edition}, 
  publisher={World Scientific Publishing Co. Pte. Ltd., Hackensack, NJ},
  date={2024},
  pages={xiii+501},
  doi={\url{https://doi.org/10.1142/13289}},
}

\end{biblist} 
\end{bibdiv}


\end{document}